\definecolor{dmagenta}{rgb}{.4,.1,.5}
\definecolor{007}{rgb}{.0,.0,.7}
\definecolor{dred}{rgb}{.5,.0,.0}
\definecolor{dgreen}{rgb}{.0,.5,.0}
\definecolor{dblue}{rgb}{.0,.0,.5}
\definecolor{violet}{rgb}{.3,.0,.9}
\definecolor{orange}{cmyk}{0,.5,.1,.0}
\definecolor{dcyan}{cmyk}{.5,.0,.0,.0}
\definecolor{dyellow}{cmyk}{.0,.0,.5,.0}
\definecolor{cm}{cmyk}{1,.0,.0,.0}
\numberwithin{equation}{section}
\newtheorem{theorem}{Theorem}[section]
\newtheorem{lemma}{Lemma}[section]
\newtheorem{proposition}{Proposition}[section]
\theoremstyle{definition}
\newtheorem{definition}{Definition}[section]
\newtheorem{example}{Example}[section]
\theoremstyle{remark}
\newtheorem{remark}{Remark}[section]
\newcommand{\grad}{\nabla}
\newcommand{\R}{\mathbb{R}}
\newcommand{\bS}{\mathbb{S}}
\newcommand{\RN}{\mathbb{R}^N}
\newcommand{\sB}{\mathscr{B}}
\newcommand{\cC}{\mathcal{C}}
\newcommand{\sE}{\mathscr{E}}
\newcommand{\cO}{\mathcal{O}}
\newcommand{\sL}{\mathscr{L}}
\newcommand{\Ind}{\mathbb{I}}
\newcommand{\bvnorm}[1]{[\kern-0.45ex[\kern0.1ex #1 \kern0.1ex]\kern-0.45ex]}
\newcommand{\abs}[1]{\lvert#1\rvert}
\newcommand{\norm}[1]{\lVert#1\rVert}
\newcommand{\sgn}{\mathrm{sgn}}
\newcommand{\df}{:=}
\newcommand{\infdel}{\Delta_\infty}
\newcommand{\ginfdel}{\Delta^\gamma_\infty}
\newcommand{\plam}{\lambda^{\prime}_{1}}
\newcommand{\pplam}{\lambda^{\prime\prime}_{1}}
\DeclareMathOperator*{\argmax}{arg\,max}
\DeclareMathOperator*{\supp}{support}
\DeclareMathOperator{\dist}{dist}
\begin{document}

\title[Principal eigenvalue problem for infinity Laplacian in $\RN$]
{Harnack inequality and principal eigentheory for general  infinity Laplacian  operators with gradient in $\RN$ and applications}

\author{Anup Biswas}
\address{Indian Institute of Science Education and Research, Dr.\ Homi Bhabha Road, Pashan, Pune 411008}
\email{anup@iiserpune.ac.in}

\author{Hoang-Hung Vo}
\address{Faculty  of  Mathematics  and  Applications,  Saigon  University,  273  An  Duong  Vuong  st.,  Ward  3,Dist.5, Ho Chi Minh City, Viet Nam}
\email{vhhung@sgu.edu.vn}

\date{}

\begin{abstract}
Under the lack of variational structure and nondegeneracy, we investigate three notions of \textit{generalized principal eigenvalue}  for a general infinity Laplacian operator with gradient and homogeneous term. A Harnack inequality is proved to support our analysis. This is a continuation of our first work \cite{BV20} and
a contribution in the development of the theory of \textit{generalized principal eigenvalue} beside the works \cite{BNV,BR06,BR15,BCPR,NV19}. We use these notions to characterize the validity of maximum principle  and study the existence, nonexistence and uniqueness of positive solutions of Fisher-KPP type equations in the whole space. The sliding method is intrinsically improved for infinity Laplacian to solve the problem. The results are related to the Liouville type results, which will be meticulously explained.
\end{abstract}

\maketitle

\textit{ \footnotesize Mathematics Subject Classification (2010)}  {\scriptsize 35J60, 35B65, 35J70}.

\textit{ \footnotesize Key words:} {\scriptsize Harnack inequality,
 principal eigenvalue, degenerate operator, maximum principle, Liouville type results, decay estimate}

\section{Introduction and Main results}
The notion of generalized principal eigenvalue was first introduced in the celebrated work of 
Berestycki-Nirenberg-Varadhan \cite{BNV} and it became an important basic of the theory of partial differential equations because of its  usefulness in the study of the existence/nonexistence of positive solution, Liouville type results and maximum principle. It also has significant impact in probability theory, especially in the theory of large deviations. The infinity Laplacian first appeared in an interesting 
series of works of G. Aronsson \cite{AG1,AG2,AG3} while studying {\it absolute minimizer} in a domain of 
$\RN$. Later it also 
found applications in image processing \cite{CMS} and the {\it tug-of-war} game of Peres-Schramm-Sheffield-Wilson \cite{PSSW}. For further insight to this topic we mention the expositions \cite{ACJ,C08,L16}. The purpose of this paper is twofold. First, we are interested in the study of eigentheory and qualitative properties of generalized principal eigenvalues for general infinity Laplacian operators with a gradient and homogeneous term. Second, we use these notions  to characterize the validity of maximum principle and the existence/nonexistence of positive solutions of Fisher-KPP type equation in $\RN$.

Throughout the paper, given $\gamma\in [0, 2]$, we define the operator $\sL$ as follow
$$\sL u = \ginfdel u + H(x, \grad u) =  \frac{1}{\abs{\grad u}^\gamma} \sum_{i, j=1}^N \partial_{x_i} u\, \partial_{x_i x_j} u\, \partial_{x_j} u + H(x, \grad u),$$
where $H(x, q)$ is a continuous Hamiltonian satisfying the following properties:
\begin{itemize}
\item[(a)] $H$ is positively $(3-\gamma)$-homogeneous in $q$, that is, $H(x, tq)=t^{3-\gamma} H(x, q)$ for all $t>0$.
\item[(b)] For any compact set $K$, there exists $C_K$ satisfying $|H(x, q)|\leq C_K |q|^{3-\gamma}$ for all $q\in\RN$ and $x\in K$.
\item[(c)] For any compact set $K$, we have 
\begin{align}\label{EH}
|H(x, q)-H(y, q)|\leq \omega_K(|x-y|) (1+ |q|^{3-\gamma}),
\end{align}
for $x, y\in K$ and $\omega_K$ is function of modulus of continuity, that is, $\omega_K:[0,\infty)\to [0, \infty)$
is a continuous function satisfying $\omega_K(0)=0$.
\end{itemize}
Some typical examples of $H$ are $|b(x)\cdot q|^{3-\gamma}$,  $b(x)\cdot q |q|^{2-\gamma}$ etc. Also,
we note that $\gamma=0$ corresponds to the infinity Laplacian whereas $\gamma=2$ corresponds to
the normalized infinity Laplacian. Infinity Laplace equation with a gradient term appears in the study
of certain tug-of-war games, see \cite{LNR13}. There are quite a
few recent works that also consider infinite Laplace equation
combined with a gradient term, see \cite{ASS11, SP, PV12, PV13}.
This gives us the motivation to consider a general operator $\sL$
for our study.
In this paper, unless otherwise mentioned, we shall always employ the concept of viscosity solution in all equations and inequations,  the precise definition is given in  Section~\ref{S-Har}. Let $c:\RN\to\R$ be a given continuous function. Given a smooth bounded domain $\cO$, following \cite{BD06,BV20,BM13,PJ07}, we define  the generalized principal eigenvalue of 
$\sL+c$ as 
\begin{equation}\label{E1.1}
\lambda_\cO(\sL+c)=\sup\{\lambda\in\R\; :\; \exists\; \psi\in\cC(\bar\cO)\;
\text{satisfying}\; \sL\psi+ (c(x)+\lambda)\psi^{3-\gamma}\leq 0\; \text{in}\; \cO, \;\text{and}\; \inf_{\cO}\psi>0\}.
\end{equation}
{ The above notion of generalized principal eigenvalue dates back to the 
works of Nussbaum-Pinchover \cite{NP92}, Berestycki-Nirenberg-Varadhan \cite{BNV}.}
It is well known  from  \cite{BNV} that the maximum principle can be directly characterized by the positivity of the principal eigenvalue. In that spirit, Berestycki et.\ al.\ \cite{BCPR} used the following quantity to characterize the maximum principle for degenerate operators :
\begin{align*}
\upmu_\cO(\sL+c)=\sup\{\lambda\in\R\; &:\; \exists\;\cO_1\Supset\cO,\; \psi\in\cC(\cO_1)\;
\text{satisfying}\; \sL\psi+ (c(x)+\lambda)\psi^{3-\gamma}\leq 0\; \text{in}\; \cO_1,
\\
&\qquad \;\text{and}\; \psi>0\; \text{in}\; \cO_1\}.
\end{align*}
We have also proved in our previous work \cite[Lemma~3.6]{BV20} that $\lambda_\cO=\upmu_{\cO}$ and $\sL +c$ satisfies a weak maximum principle 
if $\lambda_{\cO}(\sL+c)>0$  \cite[Lemma~3.2]{BV20}. In particular, we have
the following characterization for the principal eigenvalue defined in (\ref{E1.1}) (see \cite[Corollary~3.1]{BV20})
\begin{equation}\label{E1.2}
\lambda_\cO(\sL+c)=\inf\{\lambda\in\R\; :\; \exists\; \psi\in\tilde\cC^+_0(\cO)\;
\text{satisfying}\; \sL\psi+ (c(x)+\lambda)\psi^{3-\gamma}\geq 0\; \text{in}\; \cO\},
\end{equation}
where $\tilde\cC^+_0(\cO)$ denotes the set of all nontrivial, non-negative continuous functions on $\bar\cO$ vanishing on $\partial\cO$. It is worth pointing out that the collection of admissible test functions
in the definition of $\lambda_\cO$ in (\ref{E1.1}) is strictly smaller than those considered in \cite[expression~(1.10)]{BNV}.
More precisely, an analogous quantity of \cite{BNV} should read as follows
\begin{equation}\label{5.13.1}
\lambda_1(\cO, \sL+c)=\sup\{\lambda\in\R\; :\; \exists\;\; \psi\in\cC(\bar\cO)\;
\text{satisfying}\;\psi>0\;\text{and}\; \sL\psi+ (c(x)+\lambda)\psi^{3-\gamma}\leq 0\; \text{in}\; \cO\}.
\end{equation}
From \cite[Theorem~3.1]{BV20}, if $\cO$ is a smooth bounded domain in $\R^N$,  there exists a positive $\varphi\in\tilde\cC^+_0(\cO)$ satisfying 
\begin{equation}\label{Dirichlet}
\left\lbrace
\begin{array}{ll}
\sL\varphi + (c(x) + \lambda_\cO)\varphi^{3-\gamma}=0& \textrm{in $\cO$},
\\
\hspace{9em}\varphi=0& \textrm{on $\partial\cO$},
\end{array}
\right.
\end{equation}
which implies that $\lambda_1(\cO, \sL+c)\geq \lambda_\cO$. However,
the equivalence of $\lambda_1(\cO, \sL+c)$ and $\lambda_\cO$ was
not settled in the literature so far. We shall first address this
issue here.
\begin{theorem}\label{AB}
For every smooth bounded domain $\cO$ we have $\lambda_1(\cO, \sL+c)=\lambda_{\cO}(\sL+c)$.
\end{theorem}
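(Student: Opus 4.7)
The inequality $\lambda_1(\cO, \sL+c) \geq \lambda_\cO(\sL+c)$ is the easy direction, as already noted in the excerpt: the Dirichlet eigenfunction $\varphi$ of \cite[Theorem~3.1]{BV20} lies in $\cC(\bar\cO)$, is strictly positive in $\cO$, and satisfies $\sL\varphi+(c+\lambda_\cO)\varphi^{3-\gamma}=0\le 0$, so it is admissible in (\ref{5.13.1}) at the level $\lambda_\cO$. The real content is the reverse inequality $\lambda_1(\cO,\sL+c)\leq\lambda_\cO(\sL+c)$. The obstruction is that a function $\psi$ admissible in (\ref{5.13.1}) only satisfies $\psi>0$ in $\cO$ and may vanish on $\partial\cO$, so it need not be admissible in (\ref{E1.1}), which demands $\inf_\cO\psi>0$.

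My plan is to exhaust $\cO$ by smooth interior subdomains and to conclude via continuity of the principal eigenvalue under this approximation. Fix $\lambda<\lambda_1(\cO,\sL+c)$ and pick $\psi\in\cC(\bar\cO)$ with $\psi>0$ in $\cO$ and $\sL\psi+(c+\lambda)\psi^{3-\gamma}\le 0$ in $\cO$. Choose smooth domains $\{\cO_n\}$ with $\bar\cO_n\subset\cO_{n+1}\Subset\cO$ and $\bigcup_n\cO_n=\cO$. Each $\bar\cO_n$ is a compact subset of $\cO$, so continuity gives $\inf_{\bar\cO_n}\psi>0$, making $\psi|_{\bar\cO_n}$ admissible in (\ref{E1.1}) for $\lambda_{\cO_n}(\sL+c)$ at the test value $\lambda$; hence $\lambda_{\cO_n}(\sL+c)\ge\lambda$. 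The domain monotonicity $\lambda_{\cO_n}(\sL+c)\ge\lambda_{\cO_{n+1}}(\sL+c)\ge\lambda_\cO(\sL+c)$, obtained by restricting admissible test functions to smaller domains, shows that the sequence decreases to some $\ell\ge\lambda_\cO(\sL+c)$. To show $\ell=\lambda_\cO$, I would take the Dirichlet eigenfunction $\varphi_n$ on $\cO_n$ from \cite[Theorem~3.1]{BV20} normalized by $\sup\varphi_n=1$. The Harnack inequality established in this paper, together with the interior regularity for viscosity solutions of $\sL$, yields a subsequence converging locally uniformly in $\cO$ to a nontrivial limit $\varphi$; the stability of viscosity solutions produces $\sL\varphi+(c+\ell)\varphi^{3-\gamma}=0$ in $\cO$; and a barrier argument exploiting the $C^2$-regularity of $\partial\cO$ shows $\varphi\in\tilde\cC_0^+(\cO)$. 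Simplicity of the principal Dirichlet eigenvalue, a consequence of the Hopf boundary lemma and the scaling argument in \cite{BV20}, then forces $\ell=\lambda_\cO(\sL+c)$, and combining $\ell\ge\lambda$ with the arbitrariness of $\lambda<\lambda_1$ gives $\lambda_\cO(\sL+c)\ge\lambda_1(\cO,\sL+c)$.

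The hard part is the boundary behavior of the limit $\varphi$: passing the boundary vanishing of $\varphi_n$ on $\partial\cO_n$ to vanishing of $\varphi$ on $\partial\cO$ requires uniform barriers in a fixed neighborhood of $\partial\cO$, despite the degeneracy and non-variational nature of $\sL$. Such barriers exploit a Hopf-type boundary point lemma for $\ginfdel+H$ on $C^2$ domains, and the Harnack inequality from this paper enters both in ensuring the interior nontriviality of $\varphi$ and in calibrating the barriers. An alternative, more self-contained route avoids eigenvalue continuity altogether and argues by direct scaling: with $\varphi$ the Dirichlet eigenfunction at $\lambda_\cO$, one assumes $\lambda>\lambda_\cO$ and sets $t^*=\sup\{t>0:\, t\varphi\le\psi\text{ in }\cO\}\in(0,\infty)$; the strong maximum principle (available from the Harnack inequality) applied at an interior contact point, together with the strict supersolution property of $\psi$ at level $\lambda_\cO$, yields a contradiction, and the delicate case of a boundary-only contact at a zero of $\psi$ is ruled out by the Hopf boundary lemma for $\varphi$ combined with a boundary non-degeneracy estimate for the supersolution $\psi$.
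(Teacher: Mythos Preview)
Both of your approaches contain a genuine gap, and neither reaches the key idea of the paper's proof.

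In your main approach (domain exhaustion), the step ``simplicity of the principal Dirichlet eigenvalue \ldots forces $\ell=\lambda_\cO$'' is precisely what is not available: the paper explicitly remarks, just after stating this theorem, that simplicity of the first $\infty$-eigenvalue is still open. What your argument actually yields is $\ell=\lambda_1(\cO,\sL+c)$ (the limit eigenfunction $\varphi$, once shown positive in $\cO$, is admissible in \eqref{5.13.1}, giving $\lambda_1(\cO)\ge\ell$, while you already have $\ell\ge\lambda_1(\cO)$), together with $\ell\ge\lambda_\cO$ from monotonicity. Identifying $\ell$ with $\lambda_\cO$ requires knowing that no positive Dirichlet eigenfunction exists at any level above $\lambda_\cO$, and that statement is a consequence of the very theorem you are proving.

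Your alternative sliding approach is closer in spirit, but the ``delicate case of boundary-only contact'' cannot be dismissed with the tools you name. Hopf gives $\psi\ge\kappa\,\dist(\cdot,\partial\cO)$ and a barrier gives $\varphi\le\kappa_2\,\dist(\cdot,\partial\cO)$; both are \emph{linear} in the distance, so $\psi/\varphi$ is bounded below but its infimum $t^*$ may be attained only as a boundary limit. At such a point there is no viscosity contact in $\cO$ to exploit, and $\psi-t^*\varphi$ satisfies no usable inequality because $\sL$ is fully nonlinear.

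The paper sidesteps this with a decisive trick you are missing: replace $\psi$ by $v=\psi^\alpha$ for some $\alpha\in(0,1)$ chosen so that $\alpha^{3-\gamma}(c+\lambda)>c+\lambda_\cO$. The homogeneity computation shows $v$ is still a supersolution at a level above $\lambda_\cO$, while now $v\gtrsim\dist^\alpha$ \emph{strictly dominates} $\varphi\lesssim\dist$ near $\partial\cO$. Passing to $w_1=\log v$, $w_2=\log\varphi$ turns the eigenvalue into an additive constant; the boundary behavior forces $w_1-w_2\to+\infty$ at $\partial\cO$, so the comparison principle applies in all of $\cO$ and gives $w_1\ge w_2$. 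The contradiction is then immediate, since $w_1-k$ satisfies the same inequality for every $k>0$.
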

 The main difference in the definitions (\ref{E1.1}) and (\ref{5.13.1}) is that the former requires the test functions having positive infimum while  the later  only requires the test functions to be positive in $\mathcal{O}$, namely, it can be zero on $\partial\mathcal{O}$ (Dirichlet boundary condition). It is worth underlining that the set of admissible test functions in (\ref{E1.1}) includes the eigenfunctions with the Neumann boundary condition, whose existence was proved by Patrizi  \cite{SP}, therefore, from (\ref{E1.1}) $\lambda_{\cO}(\sL+c)$ must be bigger or equal to the Neumann eigenvalue obtained in \cite{SP}. 
Theorem~\ref{AB} shows  $\lambda_{\cO}(\sL+c)$  to be identical with the Dirichlet principal eigenvalue, namely the smallest real number satisfying (\ref{Dirichlet}). By Theorem \ref{AB}, we further show  that  $\lambda_{\cO}(\sL+c)$ can be defined on larger set of admissible test functions, which are only necessarily continuous and positive in $\mathcal{O}$. Let us also mention
two more interesting works \cite{JL05,JLM} that also studied the eigenvalue problem
in bounded domains of an equation involving
infinity Laplacian. In particular,
 Juutinen,  Lindqvist and Manfredi in \cite{JLM} first showed that the limiting behaviour of the 
ground state of $p$-Laplacian operators, as $p\to\infty$, and established its connection with certain
eigen-equation involving infinity Laplacian whereas \cite{JL05} considered the limit of the higher eigenvalues
of $p$-Laplacian. The simplicity of {\it first $\infty$-eigenvalue} are discussed in \cite{DRS,Y07}, but, up to our knowledge,  it has still been unknown. 

In this article, by proving a new Harnack inequality, we explore the notions of generalized principal eigenvalues, their limiting properties with respect to the parameters  and 
connection to the maximum principle of $\sL+c$ in $\RN$. Later, we  use this notions to investigate the existence, nonexistence and uniqueness of positive solutions of Fisher-KPP (for Kolmogorov, Petrovsky and Piskunov) type equations in the whole space. 

\begin{definition}
We define the generalized principal eigenvalue of $\sL+c$ 
as follows
$$\lambda_1(\RN, \sL+c)=\sup\{\lambda\in\R\; :\; \exists\;\text{positive}\; \psi\in\cC(\RN)\;
\text{satisfying}\; \sL\psi+ (c(x)+\lambda)\psi^{3-\gamma}\leq 0\; \text{in}\; \RN\}.$$
\end{definition}
For simplicity of notation we denote $\lambda_1(\RN, \sL+c)$ by $\lambda_1$ or $\lambda_1(\sL+c)$.
One of the interesting questions would be to find an admissible function attaining the value 
$\lambda_1$. Toward this goal, let us also denote by $\sE$ the set of all eigenvalues with positive eigenfunction i.e., $\lambda\in\sE$ if and only if there exists a positive $\psi\in\cC(\RN)$ satisfying
$$\sL\psi + (c(x)+\lambda)\psi^{3-\gamma}\,=\, 0 \quad\text{in}\; \RN.$$
Then we prove the following
\begin{theorem}\label{T1.1}
It holds that $\sE=(-\infty, \lambda_1]$, provided $\lambda_1>-\infty$.
\end{theorem}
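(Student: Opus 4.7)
The plan is to prove the two inclusions separately. The direction $\sE\subseteq(-\infty,\lambda_1]$ is immediate: any positive eigenfunction for $\lambda$ is in particular a positive supersolution of $\sL\psi+(c+\lambda)\psi^{3-\gamma}\leq 0$ in $\RN$, hence is admissible in the sup-definition of $\lambda_1$, forcing $\lambda\leq\lambda_1$. The substantive content is the reverse inclusion $(-\infty,\lambda_1]\subseteq\sE$, which I would attack in two steps.

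Step 1: attainment at $\lambda_1$. For each $n\in\NN$, let $\varphi_n$ be the positive Dirichlet eigenfunction on $B_n$ supplied by \cite[Theorem~3.1]{BV20} (see \eqref{Dirichlet}), with eigenvalue $\lambda_n:=\lambda_{B_n}(\sL+c)$. Domain monotonicity makes $\{\lambda_n\}$ nonincreasing. On the other hand, any positive supersolution $\psi\in\cC(\RN)$ admissible in the definition of $\lambda_1$ restricts to $\bar B_n$ with $\inf_{\bar B_n}\psi>0$ by compactness, so it is admissible in \eqref{E1.1} with $\cO=B_n$; this yields $\lambda_n\geq\lambda_1$ for every $n$, hence $\lambda_n\downarrow\lambda^*\geq\lambda_1$. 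Fix $x_0\in\RN$ and normalize $\varphi_n(x_0)=1$. The Harnack inequality of Section~\ref{S-Har} then gives uniform two-sided bounds for $\varphi_n$ on every compact $K\subset\RN$ (once $n$ is large enough that $K\Subset B_n$), while the accompanying local equicontinuity estimates for viscosity solutions of $\sL$ allow extraction, via Arzela--Ascoli and diagonalization, of a subsequence converging locally uniformly to a positive $\varphi^*\in\cC(\RN)$. Stability of viscosity solutions, together with $\lambda_n\to\lambda^*$, shows $\sL\varphi^*+(c+\lambda^*)(\varphi^*)^{3-\gamma}=0$ in $\RN$, so $\lambda^*\in\sE$; combined with the easy direction above this forces $\lambda^*=\lambda_1$, and $\lambda_1\in\sE$ with eigenfunction $\varphi^*$.

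Step 2: eigenvalues below $\lambda_1$. Fix $\lambda<\lambda_1$. Writing
\begin{equation*}
\sL\varphi^*+(c+\lambda)(\varphi^*)^{3-\gamma}=(\lambda-\lambda_1)(\varphi^*)^{3-\gamma}\leq 0
\end{equation*}
exhibits $\varphi^*$ as a positive supersolution for the new eigenvalue $\lambda$ in all of $\RN$. For each $n$, $\lambda<\lambda_1\leq\lambda_n$ means that $\sL+c+\lambda$ enjoys the comparison principle in $B_n$ by Theorem~\ref{AB} and \cite[Lemma~3.2]{BV20}. A Perron construction with $0$ as subsolution and $\varphi^*$ as supersolution then produces a nonnegative viscosity solution $u_n$ of $\sL u_n+(c+\lambda)u_n^{3-\gamma}=0$ in $B_n$ with $u_n=\varphi^*$ on $\partial B_n$, and the strong maximum principle (a consequence of the Harnack inequality of Section~\ref{S-Har}) upgrades this to $u_n>0$ in $B_n$. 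Using the $(3-\gamma)$-homogeneity of $\sL$ and of the equation, $\tilde u_n:=u_n/u_n(x_0)$ solves the same equation with $\tilde u_n(x_0)=1$. A second appeal to Harnack and local equicontinuity yields a locally uniform limit $\tilde u\in\cC(\RN)$ with $\tilde u(x_0)=1$, $\tilde u>0$, and $\sL\tilde u+(c+\lambda)\tilde u^{3-\gamma}=0$ in $\RN$; thus $\lambda\in\sE$.

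The main obstacle is ensuring that the Harnack-based limits in both steps are genuinely positive and not merely nonnegative: the normalization $\varphi_n(x_0)=1$, respectively $\tilde u_n(x_0)=1$, only pins the limit at a single point, and propagating strict positivity to the whole of $\RN$ is exactly what the full two-sided Harnack inequality of Section~\ref{S-Har} is built for. A secondary technical point is Perron solvability in Step~2 for the degenerate operator $\sL$; given the comparison principle and the ordered sub/supersolution pair $(0,\varphi^*)$, this reduces to the standard Ishii construction, but verification of strict positivity of $u_n$ via the strong maximum principle should be checked carefully against the viscosity framework of \cite{BV20}.
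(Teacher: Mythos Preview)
Your Step~1 is essentially identical to the paper's: Dirichlet eigenpairs on balls, normalization at the origin, Harnack plus local Lipschitz estimates to extract a limit, stability to identify the limiting equation, and the observation that the limit eigenvalue must equal $\lambda_1$.

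Your Step~2 diverges from the paper's. The paper does \emph{not} use a Perron construction with boundary data $\varphi^*$. Instead, for each $n$ it solves an \emph{inhomogeneous} Dirichlet problem
\[
\sL u_n + (c+\lambda)u_n^{3-\gamma} = f_n \quad\text{in } B_n,\qquad u_n=0 \quad\text{on } \partial B_n,
\]
where $f_n\leq 0$, $f_n\not\equiv 0$, and $\supp(f_n)\subset B_n\setminus\overline{B}_{n-1}$. Existence of a positive solution is supplied directly by \cite[Lemma~3.1]{BV20} (which uses that $\lambda_{B_n}(\sL+c+\lambda)=\lambda_n-\lambda>0$). The annular support of $f_n$ guarantees that on any fixed compact $K$ the equation is homogeneous for all large $n$, so Harnack applies and the limit is a positive solution of $\sL u+(c+\lambda)u^{3-\gamma}=0$ in $\RN$.

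Your route is conceptually close and should work, but it trades one black box for another: Perron/Ishii solvability for this degenerate, nonvariational operator with potential term is not established in \cite{BV20} and would need the bump-lemma step (local solvability in small balls) in addition to comparison. The paper's forcing-term device sidesteps this entirely by invoking an existence lemma already proved in \cite{BV20}. The practical advantage of the paper's approach is that it requires nothing beyond what is already in hand; the advantage of yours is that it is more transparent once Perron is justified, and it makes explicit use of the eigenfunction $\varphi^*$ from Step~1 as a barrier.
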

This type of result has recently been proved by Berestycki-Rossi \cite[Theorem~1.4]{BR15}  for linear, non-degenerate elliptic operators in general smooth unbounded domains
with Dirichlet boundary condition (see also \cite{SA83}). Furusho and Ogura \cite[Proposition~4.2]{FO81}
also obtained
an analogous result for linear, non-degenerate elliptic operators defined in an exterior domain and satisfying
a more general boundary condition.
Similar result
for $p$-Laplacian is obtained by Nguyen-Vo \cite[Theorem~1.10]{NV19} whereas Biswas-Roychowdhury \cite[Theorem~2.1]{BR20} establish it for convex, fully nonlinear, non-degenerate  elliptic operators. 
The key tool in proving Theorem~\ref{T1.1} is the Harnack inequality for the operator $\sL+c$.
As far as we know the Harnack inequality is known only for non-negative infinite harmonic functions, that is,
for the nonnegative functions $u$ satisfying $\infdel u=0$. See for instance, \cite{E93,LM95} for the
Harnack inequality of infinite
harmonic functions and \cite{TB01} for infinite super-harmonic functions. One of the main contributions of this
article is to establish the Harnack inequality for a general operator. In the same spirit we also prove a
boundary Harnack estimate (or Carleson estimate) which might be of independent interest to a large number of readers. These estimates
are proved in Section~\ref{S-Har} and Section~\ref{S-BHI}.

Also, by Theorem~\ref{T1.1}, the non-triviality of $\sE$
 is linked with the finiteness of $\lambda_1$. It is evident that for
bounded $c$ we have $\lambda_1\geq -\sup_{\RN} c$. We discuss this issue again in Section~\ref{S-finite}.

Our  motivation in the study of the several generalized principal eigenvalues  comes from characterization of
the maximum principle and the existence/nonexistence of positive solutions of Fisher-KPP type equation in $\R^N$. We emphasize that, as pointed out in \cite{BR15}, the notion (\ref{5.13.1}) (i.e. $\lambda_1$) seems unsuitable to characterize the maximum principle in unbounded domains, and therefore, searching new quantities playing this role is an important question.   Denote by $\cC^+_{0}(\RN)$ the set of all positive continuous functions vanishing at 
infinity. As to be shown below,  the following two 
notions of generalized principal eigenvalue will play the  central role in the
validity of (weak) maximum principle in  the whole space.

\begin{definition} We define 
\begin{align*}
\lambda^\prime_1(\sL+c) &=\inf\{\lambda\in\R\;:\;  \, \psi\in\cC^+_0(\RN)
\; \text{satisfying}\; \sL\psi + (c(x)+\lambda)\psi^{3-\gamma}\geq 0 \; \text{in}\; \RN\}\,,
\\
\lambda^{\prime\prime}_1(\sL+c)&=\sup\{\lambda\in\R\;:\;  \, \varphi\in\cC(\RN)
\; \text{satisfying}\; \sL\psi + (c(x)+\lambda)\psi^{3-\gamma}\leq 0 \; \text{in}\; \RN,\; 
\\
&\hspace{10em}\text{and} \; \inf_{\RN}\psi>0 \}.
\end{align*}
\end{definition}
Our next result is the following relation.
\begin{theorem}\label{T1.2} There holds $\pplam\leq\lambda^\prime_1$. If  $|H(x, q)|\leq C|q|^{3-\gamma}$, then
we also have $\plam\leq\lambda_1$.
\end{theorem}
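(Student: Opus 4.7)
Both inequalities will be proved by a contradiction argument via sliding comparison. The central structural ingredient is that assumptions (a)--(c) make $\sL$ positively $(3-\gamma)$-homogeneous, so that $\sL(tv)+(c+\lambda)(tv)^{3-\gamma}=t^{3-\gamma}(\sL v+(c+\lambda)v^{3-\gamma})$ for every $t>0$. For the first claim $\pplam\le\plam$, suppose to the contrary $\plam<\pplam$ and pick $\plam<\lam_b<\lam_a<\pplam$. By the definitions there exist a viscosity subsolution $\psi_2\in\cC^+_0(\RN)$ of $\sL\psi_2+(c+\lam_b)\psi_2^{3-\gamma}\ge 0$ and a viscosity supersolution $\psi_1\in\cC(\RN)$ with $\inf_{\RN}\psi_1>0$ of $\sL\psi_1+(c+\lam_a)\psi_1^{3-\gamma}\le 0$. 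The plan is to slide a dilate of $\psi_1$ downward until it touches $\psi_2$ from above, and then exploit the gap $\lam_a>\lam_b$ at the contact point.

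Set $t^*:=\inf\{t>0:\,t\psi_1\ge\psi_2\text{ in }\RN\}$. Since $\psi_2$ is bounded (continuous and vanishing at infinity) while $\inf\psi_1>0$, one has $t^*\in(0,\infty)$. Moreover $(t^*\psi_1-\psi_2)(x)\ge t^*\inf\psi_1-\psi_2(x)\to t^*\inf\psi_1>0$ as $|x|\to\infty$, so $t^*\psi_1-\psi_2$ attains its minimum on a compact set; minimality of $t^*$ and continuity force this minimum to equal $0$, yielding a contact point $x_0\in\RN$ with $u(x_0)=\psi_2(x_0)>0$ and $u\ge\psi_2$ throughout $\RN$, where $u:=t^*\psi_1$. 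By homogeneity $u$ remains a supersolution with parameter $\lam_a$, and since $\lam_a>\lam_b$ and $u>0$ this upgrades to the strict inequality $\sL u+(c+\lam_b)u^{3-\gamma}\le(\lam_b-\lam_a)u^{3-\gamma}<0$. Thus $u$ is a strict supersolution and $\psi_2$ a subsolution of the common equation $\sL w+(c+\lam_b)w^{3-\gamma}=0$ that touch at the interior point $x_0$---a configuration ruled out by the strong maximum principle derived from the paper's Harnack inequality (Section~\ref{S-Har}). This contradicts $\lam_a>\lam_b$ and proves $\pplam\le\plam$.

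For $\plam\le\lambda_1$ the strategy is identical, but now $\lam_a<\lambda_1$ provides only a positive (not necessarily bounded-below) supersolution $\psi_1$. The obstruction is that $t^*$ may be infinite when $\psi_1$ decays faster than $\psi_2$ along a sequence going to infinity. The hypothesis $|H(x,q)|\le C|q|^{3-\gamma}$ enters precisely to preclude this: it makes the Harnack inequality of Section~\ref{S-Har} available with constants uniform over $\RN$, controlling the decay of positive supersolutions of $\sL+c$. With this global Harnack in hand one can either verify $t^*<\infty$ directly or extract a contact point through an exhaustion by balls $B_R\uparrow\RN$ together with a limit argument on the associated sliding parameters, after which the Part~1 conclusion applies verbatim.

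The principal technical obstacle in both parts is the rigorous strong-maximum step at the contact point $x_0$ for a degenerate and (when $\gamma>0$) singular operator. A naive subtraction of the viscosity sub- and supersolution inequalities at $x_0$ is not directly available: one must either apply the Crandall--Ishii doubling-of-variables lemma while handling the zero-gradient case through the standard viscosity convention for $\ginfdel$, or invoke the strong maximum principle that follows from the paper's Harnack inequality developed in Section~\ref{S-Har}. Both routes depend substantively on the estimates established there.
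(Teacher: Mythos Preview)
Your Part~1 is correct and is essentially the paper's argument, which is packaged as the maximum principle of Theorem~\ref{T1.4}(i): if $\pplam(\sL+c+\lambda)>0$ then any $\cC^+_0$-subsolution must be nonpositive, and applying this with $\lambda\in(\plam,\pplam)$ yields the contradiction. The coupling/doubling step you allude to is exactly what the paper uses there.

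Your Part~2, however, has a genuine gap. The sliding scheme needs the supersolution parameter $\lambda_a$ strictly \emph{larger} than the subsolution parameter $\lambda_b$; only then is $u=t^*\psi_1$ a strict supersolution at level $\lambda_b$, producing a contradiction at the contact point. In Part~1 the contradiction hypothesis $\plam<\pplam$ delivers this ordering. For $\plam\le\lambda_1$ the contradiction hypothesis is $\lambda_1<\plam$; supersolutions are available only for $\lambda_a\le\lambda_1$ while $\cC^+_0$-subsolutions exist only for $\lambda_b\ge\plam$, forcing $\lambda_a<\lambda_b$. The sign of $(\lambda_b-\lambda_a)u^{3-\gamma}$ is then positive and no contradiction follows. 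Harnack control on the decay of $\psi_1$ is irrelevant here: even if $t^*<\infty$, the inequality points the wrong way.

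The paper's proof of $\plam\le\lambda_1$ is not a comparison argument but a \emph{construction}: for each $\lambda>\lambda_1$ one must produce a $\cC^+_0$-subsolution. After shifting to $\lambda=0$, $\lambda_1<0$, the potential is truncated to $c_k=c_+\xi_k-c_-$ (equal to $c$ on compacts, nonpositive at infinity); a Harnack-based stability argument gives $\lambda_1(\sL+c_k)\to\lambda_1<0$, so some Dirichlet eigenvalue $\lambda_{\sB_{n_0}}(\sL+c_k)<0$. The corresponding eigenfunction, suitably normalized, is a subsolution to the auxiliary equation $\sL u+c_k u^{3-\gamma}=((c_k)_++1)u^{4-\gamma}$ on $\sB_{n_0}$, and together with the supersolution $1$ this yields a bounded positive solution $u$ on $\RN$. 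The global bound $|H(x,q)|\le C|q|^{3-\gamma}$ is then used in a barrier argument (a quadratic cap centered far out, where $c_k\le 0$) to force $u(x)\to 0$ as $|x|\to\infty$. This $u\in\cC^+_0(\RN)$ satisfies $\sL u+c_k u^{3-\gamma}\ge 0$, hence $\plam(\sL+c)\le\plam(\sL+c_k)\le 0$. This construction is the missing idea in your proposal.
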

The equivalence of the eigenvalues in Theorem~\ref{T1.2} should be observed. In the case of bounded domains, all three
notions of generalized principal eigenvalue coincide, as shown in  Theorem~\ref{AB}. But they might not be equal for unbounded domains, in particular, in $\RN$.
See also Example~\ref{Eg1.2} for a specific situation.

\begin{example}\label{Eg1.1} 
It is noted that there may not exist any admissible eigenfunction for  $\plam$. Let us suppose $\sL \psi=\infdel \psi$ and $c\equiv 0$. From
the Liouville property it is easy to see that $\lambda_1(\sL)=\pplam(\sL)=0$. 
Theorem~\ref{T1.2} implies $\plam(\sL)=0$. Since any bounded function satisfying $\infdel u=0$ in $\RN$
are constant, there is no function $u$ in $\cC^+_0(\RN)$ satisfying $\infdel u=0$ in $\RN$.
\end{example}

It is an interesting question to ask if these several types of eigenvalues coincide. The answer is negative, in general, and therefore we cannot say $\plam$ and $\pplam$ are also characterizations for 
$\lambda_1$. The following example will demonstrate our statement: 
\begin{example}\label{Eg1.2}
Suppose that $N=1$ and $\sL u = (u^\prime)^2u^{\prime\prime}+ (u^\prime)^3$. For $\psi=e^{-\frac{1}{2}x}$ it is easy to see that 
$$\sL \psi + \frac{1}{16}\psi^3=0\quad \text{in}\; \R.$$
Then from the definition we obtain $\lambda_1(\sL)\geq \frac{1}{16}$. Next we show that 
$$\pplam(\sL)\,=\,\plam(\sL)\,=\,0.$$
 It is easy to see that $\pplam(\sL)\geq 0$. Thus, in view of
Theorem~\ref{T1.2}, it is enough to show that $\plam(\sL)\leq 0$. Consider any $\delta>0$. Let 
$g(x)=\frac{1}{1+\alpha x^2}$ for $\alpha>0$. An easy calculation reveals
\begin{align*}
(g^\prime)^2g^{\prime\prime}+ (g^\prime)^3 + \delta g^3(x)
&= \frac{32 (\alpha x)^4}{(1+\alpha x^2)^7} - \frac{8\alpha^3x^2}{(1+\alpha x^2)^6}
-\frac{8\alpha^3x^3}{(1+\alpha x^2)^6} + \delta g^3(x)
\\
&\geq \underbrace{\left(\delta - \frac{8\alpha^3x^2}{(1+\alpha x^2)^3}-\frac{8\alpha^3|x|^3}{(1+\alpha x^2)^3}\right)}_{\df F(x)}g^3(x)\,.
\end{align*}
We can choose $\alpha$ small enough so that $F(x)>0$ for all $x$. To see this, note that for $|x|\leq 1$,
$F(x)\geq \delta - 16\alpha^3$ and for $|x|\geq 1$, 
$$F(x)\geq \delta - \frac{16\alpha^3|x|^3}{(1+\alpha x^2)^3}\geq \delta - 
\frac{16\alpha(1+\alpha |x|^2)^2}{(1+\alpha x^2)^3}\geq \delta - \frac{16\alpha}{(1+\alpha)}.$$
Thus, for small $\alpha>0$, we have 
$$(g^\prime)^2g^{\prime\prime}+ (g^\prime)^3 + \delta g^3(x)\geq 0,$$
which yields $\plam(\sL)\leq\delta$. The arbitrariness of $\delta$ implies that $\plam(\sL)\leq 0$. 
Hence we have $\pplam(\sL)=\plam(\sL)=0$.
\end{example}

It  still remains an open question that whether  $\plam(\sL+c)$ and $\pplam(\sL+c)$ are equivalent.
It is worth mentioning that in case of linear, non-degenerate elliptic operator the equality of 
$\plam(\sL+c)$ and $\pplam(\sL+c)$ is conjectured by Berestycki-Rossi \cite[Conjecture~1.8]{BR15} and they were able to prove this conjecture in some special cases, in particular, for self-adjoint, uniformly elliptic operators in dimension $1$ or with  radially symmetric coefficients, thanks to the advantage of variational structure. Here, for a very general degenerate operator, we are able to prove the equivalence in some cases as stated below :
\begin{theorem}\label{T1.3}
The following statements hold true :
\begin{itemize}
\item[(i)]Assume  $|H(x, q)|\leq b(x) |q|^{3-\gamma}$, $\lim_{|x|\to\infty} b(x)=0$ and
 $\liminf_{|x|\to\infty} c(x)>-\lambda_1$.   Then one has $\lambda_1=\pplam$.
\item[(ii)] Assume  $|H(x, q)|\leq C |q|^{3-\gamma}$
and $\limsup_{|x|\to\infty} c(x)\leq -\lambda_1$, then we have $\lambda_1=\plam=\pplam$. Moreover, if $\limsup_{|x|\to\infty} c(x)<-\lambda_1$, then $\lambda_1$ possesses a principal eigenfunction  in $\cC^+_0(\RN)$.
\item[(iii)] Assume  $|H(x, q)|\leq b(x) |q|^{3-\gamma}$, $\lim_{|x|\to\infty} b(x)=0$ and
 $\lim_{|x|\to\infty} c(x)$ exists. Then we have
$\lambda_1=\pplam$.
\end{itemize}

\end{theorem}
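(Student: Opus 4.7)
The plan is to establish each part by proving the nontrivial inequality $\lambda_1\leq\pplam$ (and for part~(ii) also $\lambda_1\leq\plam$), since $\pplam\leq\plam$ is immediate and $\plam\leq\lambda_1$ is given by Theorem~\ref{T1.2} under the boundedness of~$H$. The two main ingredients are the positive eigenfunctions at every $\lambda\leq\lambda_1$ furnished by Theorem~\ref{T1.1} and the Harnack inequality of Section~\ref{S-Har}.

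For part~(i), I choose $\lambda<\lambda_1$ so that $c(x)+\lambda\geq\eta>0$ outside some ball $B_R$ (possible by $\liminf c>-\lambda_1$), and let $\psi>0$ satisfy $\sL\psi+(c+\lambda)\psi^{3-\gamma}\leq 0$ on $\RN$. If $\inf_\RN\psi>0$ then $\psi$ is admissible for $\pplam$ at eigenvalue $\lambda$, giving $\pplam\geq\lambda\uparrow\lambda_1$; so the task is to rule out $\psi(x_n)\to 0$ with $|x_n|\to\infty$. Normalize $\psi_n(y):=\psi(x_n+y)/\psi(x_n)$; the $(3-\gamma)$-homogeneity of $\sL$ makes $\psi_n$ satisfy the same inequality with $\psi_n(0)=1$, and Harnack renders it locally uniformly bounded above and below. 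Since $b(x_n+\cdot)\to 0$ uniformly on balls (killing $H$ in the limit) and $c(x_n+\cdot)+\lambda\geq\eta$, viscosity stability produces a positive $\psi_\infty\in\cC(\RN)$ with $\ginfdel\psi_\infty+\eta\psi_\infty^{3-\gamma}\leq 0$. Such a $\psi_\infty$ would give $\lambda_1(\RN,\ginfdel)\geq\eta>0$, contradicting the Liouville-type property $\lambda_1(\ginfdel)=0$ (cf.\ Example~\ref{Eg1.1} for $\gamma=0$; the general case follows from the same Harnack-based arguments).

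For part~(ii), I treat the strict subcase $\limsup c<-\lambda_1$ first. I construct a principal eigenfunction $\varphi\in\cC^+_0(\RN)$ by Dirichlet exhaustion: the principal eigenfunctions $\varphi_R$ of \eqref{Dirichlet} on $B_R$ with eigenvalues $\lambda_R\downarrow\lambda_1$, normalized by $\varphi_R(0)=1$, converge (Harnack plus viscosity stability) to a positive $\varphi$ solving $\sL\varphi+(c+\lambda_1)\varphi^{3-\gamma}=0$ on $\RN$. Decay $\varphi\to 0$ at infinity follows from comparison with an explicit radial Keller--Osserman-type supersolution, exploiting the strict gap $c+\lambda_1\leq-\tilde\eta<0$ outside a ball and the $(3-\gamma)$-homogeneity of $\sL$. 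To upgrade to $\lambda_1\leq\pplam$, I exploit the invariance of $\sL$ under adding constants to its argument: for $\lambda<\lambda_1$ and $\delta>0$,
\[
\sL(\varphi+\delta)+(c+\lambda)(\varphi+\delta)^{3-\gamma}=(\lambda-\lambda_1)\varphi^{3-\gamma}+(c+\lambda)\bigl[(\varphi+\delta)^{3-\gamma}-\varphi^{3-\gamma}\bigr].
\]
The first term is strictly negative; for $\lambda$ close enough to $\lambda_1$ that $c+\lambda<0$ outside the ball, the second term is nonpositive there (positive bracket, negative coefficient), while inside the ball the bracket is $O(\delta)$ and is dominated by the (bounded below) first term for small $\delta$. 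Hence $\varphi+\delta$ is admissible for $\pplam$, giving $\pplam\geq\lambda\uparrow\lambda_1$. The non-strict subcase $\limsup c=-\lambda_1$ is reduced to the strict one: for any $\lambda'<\lambda_1$, Theorem~\ref{T1.1} provides a positive eigenfunction $\psi_{\lambda'}$, and $c+\lambda'\leq-(\lambda_1-\lambda')<0$ outside a ball (using $\limsup c\leq-\lambda_1<-\lambda'$), so the Keller--Osserman comparison forces $\psi_{\lambda'}\in\cC^+_0(\RN)$; applying the shift identity to $\psi_{\lambda'}$ and passing to the limit $\lambda'\uparrow\lambda_1$ closes the argument.

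Part~(iii) follows by dichotomy on $\ell:=\lim_{|x|\to\infty}c(x)$: if $\ell>-\lambda_1$ then $\liminf c=\ell>-\lambda_1$ and part~(i) applies; if $\ell\leq-\lambda_1$ then $\limsup c=\ell\leq-\lambda_1$ and, since $b\to 0$ forces $|H(x,q)|\leq C|q|^{3-\gamma}$, part~(ii) applies. The main obstacle is the part~(ii) construction: securing genuine decay of the Dirichlet-limit at infinity requires a sharp Keller--Osserman barrier for the degenerate $\gamma$-infinity Laplacian, and transferring the construction to the non-strict case via a sub-eigenvalue requires both the barrier and the stability of viscosity solutions under the full homogeneity-and-gradient-term structure of $\sL$. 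The rescaling step of part~(i) likewise hinges on a Liouville property of $\ginfdel$ that, though natural, must be extracted from the new Harnack inequality.
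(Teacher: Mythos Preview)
For part~(i) you take a genuinely different route from the paper. The paper uses a \emph{sliding argument}: it invokes a bump subsolution $\phi^x$ from \cite[Proposition~4.1]{BV20} satisfying $\sL\phi^x+\delta(\phi^x)^{3-\gamma}>0$ on balls $\sB_{R_2}(x)$ and slides it along the segment from the origin to an arbitrary far point $z$, propagating the lower bound $\kappa\phi<\psi$ by comparison at each step; this shows the principal eigenfunction at $\lambda_1$ itself has $\inf_{\RN}\psi>0$. Your blow-up approach---rescaling $\psi_n(y)=\psi(x_n+y)/\psi(x_n)$, extracting via Harnack and local Lipschitz estimates a positive limit solving $\ginfdel\psi_\infty+\eta\psi_\infty^{3-\gamma}\leq 0$, and invoking $\lambda_1(\ginfdel)=0$---is a valid alternative. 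You should, however, make the Liouville input $\lambda_1(\ginfdel)=0$ explicit for general $\gamma\in[0,2]$ (e.g.\ via the scaling $\lambda_{\sB_n}(\ginfdel)=n^{-(4-\gamma)}\lambda_{\sB_1}(\ginfdel)\to 0$), since Example~\ref{Eg1.1} only records $\gamma=0$. The sliding method is more self-contained; your blow-up buys robustness whenever one knows the limiting operator has vanishing principal eigenvalue.

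There is a genuine gap in your part~(ii) construction of the $\cC^+_0(\RN)$ eigenfunction. Harnack gives only \emph{local} bounds on the Dirichlet approximations $\varphi_R$, so the limit $\varphi$ is a priori merely locally bounded, and the inequality $\sL\varphi\geq\tilde\eta\varphi^{3-\gamma}$ outside a ball does not by itself force decay: exponentials $e^{kx_1}$ furnish unbounded subsolutions for $k$ large. The paper closes this by first establishing a \emph{uniform} bound $\varphi_n\leq\kappa$: once $c(x)+\lambda_n<0$ on $\sB_k^c$, the constant $m_n:=\sup_{\sB_k}\varphi_n$ (controlled by Harnack since $\varphi_n(0)=1$) is a strict supersolution on the annulus $\sB_n\setminus\bar\sB_k$, and comparison gives the global bound. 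Only with boundedness in hand does the paper derive decay, and it does so by a direct contradiction argument with the quadratic test function $\xi(x)=r^{-2}|x-x_0|^2-1$ rather than a Keller--Osserman barrier. Your $\varphi+\delta$ shift identity for $\pplam$ and the part~(iii) dichotomy match the paper; note though that the paper's shift argument in the non-strict case $\limsup c\leq-\lambda_1$ is applied directly to any principal eigenfunction at $\lambda_1$ (needing only $\sup_K\psi<\infty$ from Harnack on a compact), avoiding your detour through sub-eigenvalues~$\lambda'$.
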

The decay of $b$ in Theorem~\ref{T1.3}(i) is important. Otherwise, we get a counterexample from
Example~\ref{Eg1.2}.

%\begin{proposition}\label{P1.1}
%The following hold.
%\begin{itemize}
%\item[(i)]Assume that $|H(x, q)|\leq b(x) |q|^{3-\gamma}$.
%If $\liminf_{|x|\to\infty} c(x)>-\lambda_1$ and $\lim_{|x|\to\infty} b(x)=0$ then we have
%$\lambda_1=\pplam$.
%\item[(ii)] Assume that $|H(x, q)|\leq C |q|^{3-\gamma}$ for all $x$ and 
%$\limsup_{|x|\to\infty} c(x)<-\lambda_1$. Then we have $\lambda_1=\plam=\pplam$. Furthermore,
%there exists a principal eigenfunction in $\cC^+_0(\RN)$. More generally, if $|H(x, q)|\leq C |q|^{3-\gamma}$
%and $\limsup_{|x|\to\infty} c(x)\leq -\lambda_1$, then we have $\lambda_1=\plam=\pplam$.
%\item[(iii)] Assume that $|H(x, q)|\leq b(x) |q|^{3-\gamma}$.
%If $\lim_{|x|\to\infty} c(x)$ exists and $\lim_{|x|\to\infty} b(x)=0$, then we have
%$\lambda_1=\plam$.
%\end{itemize}
%\end{proposition}

Next we state our weak maximum principle which is the extension of maximum principle in bounded domains
\cite[Definition~1.1]{BCPR}.
\begin{definition}[Maximum Principle]
We say that the operator $$F(D^2u, \grad u, u, x)= \sL u + c(x) (u_+)^{3-\gamma}$$
 satisfies a (weak) maximum principle (MP), if for any $u\in\cC(\RN)$ satisfying
$$F(D^2u, \grad u, u, x)\geq 0\quad \text{in}\; \RN, \quad \text{and}\; \limsup_{|x|\to\infty}u(x)\leq0,$$
we have $u\leq 0$ in $\RN$. 
\end{definition}
\begin{remark}\label{R1.1}
There are other possible ways to state the MP. We may also consider $u\in\cC(\RN)$ satisfying
\begin{equation}\label{ER1.1}
\sL u + c(x)\sgn (u) |u|^{3-\gamma}\geq 0\quad \text{in}\; \RN, \quad \text{and}\; 
\limsup_{|x|\to\infty}u\leq 0.
\end{equation}
Then it is easy to see that 
$$\sL u + c(x) (u_+)^{3-\gamma}\geq 0\quad \text{in}\; \RN.$$
Thus, if MP holds, it can be applied to \eqref{ER1.1}.
\end{remark}
Our next result provides a necessary and a sufficient condition for the validity of MP. More precisely,
we  prove 
\begin{theorem}\label{T1.4} 
We have the following criteria to characterize the maximum principle in $\R^N$ :
\begin{itemize}
\item[(i)] If $\sL+c$ satisfies MP then it must hold that $\plam\geq 0$. Also, $\sL+c$ satisfies a MP if $\pplam>0$.

\item[(ii)] Let $\eta\df\limsup_{|x|\to\infty} c(x)<0$ and $|H(x, q)|\leq C |q|^{3-\gamma}$.
Then $\sL+c$ satisfies MP if and only if $\lambda_1>0$.
\end{itemize}
\end{theorem}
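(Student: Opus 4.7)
The plan is to establish the two halves of (i) first and then combine them with Theorem~\ref{T1.3}(ii) to derive (ii).

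For the necessity in (i), I will argue by contrapositive. Assuming $\plam<0$, I pick $\lambda\in(\plam,0)$ and a witness $\psi\in\cC^+_0(\R^N)$ from the definition of $\plam$ satisfying $\sL\psi+(c+\lambda)\psi^{3-\gamma}\geq 0$. Using $\psi>0$ (hence $\psi_+=\psi$) and $-\lambda>0$, this rearranges to $\sL\psi+c(\psi_+)^{3-\gamma}\geq 0$, while $\psi\in\cC^+_0(\R^N)$ forces $\limsup_{|x|\to\infty}\psi=0$. MP applied to $\psi$ would yield $\psi\leq 0$, contradicting positivity.

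For the sufficiency in (i), I assume $\pplam>0$ and pick $\mu\in(0,\pplam)$ together with a witness $\psi\in\cC(\R^N)$, $\inf\psi>0$, satisfying $\sL\psi+(c+\mu)\psi^{3-\gamma}\leq 0$. Supposing toward a contradiction that $u$ satisfies the MP hypothesis but $\sup u>0$, the ratio $t^*:=\sup_{\R^N} u/\psi$ is finite (upper-boundedness of $u$ plus $\inf\psi>0$) and strictly positive; moreover, the condition $\limsup_{|x|\to\infty}(u-t^*\psi)\leq -t^*\inf\psi<0$ forces the equality $u=t^*\psi$ to be attained at some $x_0\in\R^N$ with $u(x_0)>0$. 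By $(3-\gamma)$-homogeneity of $\sL$, the function $v:=t^*\psi$ is then a strict viscosity supersolution, $\sL v+cv^{3-\gamma}\leq -\mu v^{3-\gamma}<0$, touching the subsolution $u$ (with $u_+=u$ near $x_0$) from above at $x_0$. Invoking the strong comparison principle for $\sL+c$ (available through the Harnack inequality of Section~\ref{S-Har} and the comparison theory of \cite{BV20}) one concludes $u\equiv v$ on $\R^N$, in clear violation of $\limsup u\leq 0<\inf v$.

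For (ii), the forward direction is immediate: by (i) and Theorem~\ref{T1.2}, $\lambda_1\geq\plam\geq 0$; if $\lambda_1=0$, then $\eta<0=-\lambda_1$ triggers Theorem~\ref{T1.3}(ii) to produce a principal eigenfunction $\phi\in\cC^+_0(\R^N)$ with $\sL\phi+c\phi^{3-\gamma}=0$, and this $\phi$ violates MP exactly as in the first paragraph, forcing $\lambda_1>0$. For the converse, by (i) it suffices to show $\pplam>0$. When $\eta\leq -\lambda_1$, Theorem~\ref{T1.3}(ii) directly gives $\pplam=\lambda_1>0$. In the remaining subcase $-\lambda_1<\eta<0$, I pick $\mu\in(0,\min(\lambda_1,-\eta))$ together with a positive $\psi$ from the definition of $\lambda_1$ satisfying $\sL\psi+(c+\mu)\psi^{3-\gamma}\leq 0$; since $\mu<-\eta$, $c+\mu$ is uniformly negative outside a large ball. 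To promote $\psi>0$ to $\inf\psi>0$, I suppose $\psi(x_n)\to 0$: the Harnack inequality rules out $x_n$ remaining in a bounded set, forcing $|x_n|\to\infty$, and the rescalings $\tilde\psi_n(y):=\psi(x_n+y)/\psi(x_n)$ satisfy (by $(3-\gamma)$-homogeneity) $\sL\tilde\psi_n+(c(x_n+\cdot)+\mu)\tilde\psi_n^{3-\gamma}\leq 0$ with $\tilde\psi_n(0)=1$. A subsequence converges locally uniformly (via Harnack) to a positive $\tilde\psi_\infty$ with $\tilde\psi_\infty(0)=1$ satisfying $\sL\tilde\psi_\infty+\kappa\tilde\psi_\infty^{3-\gamma}\leq 0$ on $\R^N$ in the viscosity limit for some constant $\kappa<0$; a Liouville-type comparison with constants then delivers the desired contradiction.

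The main technical obstacles will be the strong comparison principle used in the sliding step of (i) and the Liouville/blow-down step in the reverse direction of (ii) in the subcase $-\lambda_1<\eta<0$; both rely decisively on the Harnack inequality developed earlier in this paper.
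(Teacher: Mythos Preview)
Your treatment of part~(i) is essentially the paper's: the necessity is identical, and for the sufficiency the paper carries out precisely the sliding $t^*=\sup u/\psi$ you describe. One caution: you appeal to a ``strong comparison principle'' at the touching point, but for viscosity solutions of this degenerate operator no such off-the-shelf statement is cited. The paper instead performs the doubling-of-variables argument directly (coupling $u$ and $t^*\psi$), and the contradiction comes immediately from the strict term $-\mu(t^*\psi)^{3-\gamma}<0$ at the limit point, not from propagating $u\equiv v$. You should make that step explicit rather than invoke an unnamed principle.

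The serious gap is in the reverse direction of~(ii), in your subcase $-\lambda_1<\eta<0$. Your plan is to show $\pplam>0$ by proving that any admissible supersolution $\psi$ for $\sL+(c+\mu)$ with $0<\mu<-\eta$ has $\inf_{\R^N}\psi>0$, via a blow-down at points $x_n$ with $\psi(x_n)\to 0$. But the limiting inequality you obtain is $\sL_\infty\tilde\psi_\infty+\kappa\,\tilde\psi_\infty^{3-\gamma}\le 0$ with $\kappa<0$, and this carries \emph{no} Liouville-type obstruction: every positive constant is already a supersolution (the left side equals $\kappa\cdot\mathrm{const}^{3-\gamma}<0$). There is nothing to contradict. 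Moreover, the translated Hamiltonians $H(x_n+\cdot,\,\cdot)$ need not converge under the hypothesis $|H(x,q)|\le C|q|^{3-\gamma}$ alone, so even forming $\sL_\infty$ is delicate. In fact it is not clear that $\pplam>0$ holds at all in this subcase (Theorem~\ref{T1.3}(i), which would give $\pplam=\lambda_1$, requires the drift coefficient $b(x)\to 0$, which is \emph{not} assumed in~(ii)).

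The paper avoids this issue entirely by \emph{not} going through $\pplam>0$. Instead it argues directly: take the principal eigenfunction $\psi$ (positive but possibly with $\inf\psi=0$), set $\Omega_\varepsilon=\{u>\varepsilon\}$, and note that since $c<0$ outside a compact $K$ the maximum of $u_+$ is attained in $K$, forcing $\Omega_\varepsilon$ to be bounded. On $\Omega_\varepsilon$ one has $\sL u + c(u-\varepsilon)^{3-\gamma}\ge -\varepsilon\,\mathrm{const}$, and for $\varepsilon$ small this error is dominated by $\lambda_1\inf_K\psi^{3-\gamma}$. A sliding comparison between $t(u-\varepsilon)$ and $\psi$ on the bounded set $\Omega_\varepsilon$ then yields $u-\varepsilon\le\psi$; since $\kappa\psi$ works equally well for every $\kappa>0$, one concludes $u\le 0$. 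This localization to $\Omega_\varepsilon$ is the key idea you are missing.
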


As it can be seen from Theorem~\ref{T1.4} that the eigenvalues $\plam$ and $\pplam$ play the key role in the validity of the maximum
principle. Under some specific conditions (Theorem~\ref{T1.4}(ii)) we can show that $\lambda_1$ characterizes the MP and it should be compared with \cite[Proposition~1.11]{BR15}.
%\begin{theorem}\label{T1.4}
%Let $\eta\df\limsup_{|x|\to\infty} c(x)<0$ and $|H(x, q)|\leq C |q|^{3-\gamma}$.
%Then $\sL+c$ satisfies MP if and only if $\lambda_1>0$.
%\end{theorem}
The sign of $\eta$ in the above result is crucial. For example, suppose that $N=1,\gamma=0$,
$H(x, q)= q^3$ and $c=\frac{1}{32}$. Then it follows from  Example~\ref{Eg1.2} that 
$\lambda_1(\sL+c)\geq \frac{1}{32}$ but $\plam(\sL+c)=-\frac{1}{32}$. Thus MP does not hold for 
$\sL+c$, by Theorem~\ref{T1.4}. Another interesting consequence of Theorem~\ref{T1.4} and Example~\ref{Eg1.2}
is that validity of MP does not imply equality of eigenvalues.

%Next, in the spirit of \cite[Proposition~5.1]{BNV}, we discuss the continuity property of the principal eigenvalues with respect to the coefficients.  Our result reads as follows
%
%
%\begin{theorem}\label{T1.5}
%$\lambda_1$ ($\plam,\, \pplam$) are Lipschitz continuous with respect to the potential $c$.
%$\lambda_1$ (and $\pplam$)
%is locally Lipschitz in $H$ the following sense: let $H_1, H_2$ be two Hamiltonians satisfying
%$$|H_1(x, q)-H_1(x, q)|\leq \delta |q|^{3-\gamma}\quad \text{for all}\; x, q,\quad \text{and}\; \delta<1,$$
%and if $\lambda_1(\sL_i)$ is the generalized principal  eigenvalue of $\sL_i =\ginfdel + H_i + c$, $i=1,2$, then we have
%$$|\lambda_1(\sL_1)-\lambda_1(\sL_2)|\leq (3c_0 + d + 3|\lambda_1(\sL_1)|+3|\lambda_1(\sL_2)|)\delta,$$
%where $\sup_{\RN}c= c_0$ and $d=\frac{1}{4-\gamma}\left[\frac{4-\gamma}{3-\gamma}\right]^{-(3-\gamma)}$.  
%
%\end{theorem}

The  next result concerns the limiting properties of the principal eigenvalue. Let us define following operators for the sake of presentation :
$$\sL_\alpha u \df \sL u + \alpha c(x) u^{3-\gamma},\quad \text{and}\quad
\tilde\sL_{\alpha} u \df \alpha\ginfdel u + H(x, \grad u)+ c(x) u^{3-\gamma},$$
where $c$ is a bounded continuous function. Let $\lambda_{1,\alpha}, \tilde\lambda_{1,\alpha}$ be the principal eigenvalues associated to $\sL_\alpha$ and $\tilde\sL_{\alpha}$, respectively.

\begin{theorem}\label{T1.6}
We have the following limits of the principal eigenvalue with respect to the parameters :
\begin{itemize}
\item[(i)] $\lim_{\alpha\to\infty}\frac{\lambda_{1,\alpha}}{\alpha}=-\sup_{\RN} c$,
\item[(ii)] $\lim_{\alpha\to-\infty}\frac{\lambda_{1,\alpha}}{\alpha}=-\inf_{\RN} c$,
\item[(iii)] $\liminf_{\alpha\to\infty}\tilde\lambda_{1,\alpha}\geq
-\limsup_{|x|\to\infty} c(x)$ , provided $|H(x, q)|\leq C|q|^{3-\gamma}$.
\item[(iv)] $\limsup_{\alpha\to\infty}\tilde\lambda_{1,\alpha}\leq
-\liminf_{|x|\to\infty} c(x)$, provided $|H(x, q)|\leq g(x)|q|^{3-\gamma}$ for some
function $g$ with $\lim_{|x|\to\infty} g(x)=0$.
\end{itemize}
\end{theorem}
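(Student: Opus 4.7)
The plan is to address the four statements in turn. Parts~(i) and~(ii) rest on monotonicity of $\lambda_1$ in the potential and in the domain, combined with the constant-shift identity $\lambda_{\cO}(\sL+c+k)=\lambda_{\cO}(\sL+c)-k$. Testing the definition of $\lambda_{1,\alpha}$ with $\psi\equiv 1$, for which $\ginfdel 1=0$ in the viscosity sense and $H(x,0)=0$ by positive $(3-\gamma)$-homogeneity, reduces admissibility to $\alpha c(x)+\lambda\leq 0$ and therefore gives $\lambda_{1,\alpha}/\alpha\geq -\sup c$ when $\alpha>0$ and $\lambda_{1,\alpha}/\alpha\leq -\inf c$ when $\alpha<0$. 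For the matching directions, every positive $\psi\in\cC(\RN)$ has positive infimum on each compact set, so $\lambda_{1,\alpha}\leq\lambda_{B_r(x_0)}(\sL+\alpha c)$ for any ball; choose $B_r(x_0)$ near the supremum of $c$ (for~(i)) so that $\inf_{B_r}c\geq\sup c-2\eps$, or near the infimum (for~(ii)) so that $\sup_{B_r}c\leq\inf c+2\eps$; combining monotonicity in $c$ with the shift identity bounds the right-hand side by $\lambda_{B_r}(\sL)-\alpha(\sup c-2\eps)$ or $\lambda_{B_r}(\sL)-\alpha(\inf c+2\eps)$; divide by $\alpha$, send $\alpha\to\pm\infty$, and then $\eps\to 0$.

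For~(iii) I plan to exhibit, for each $\eps>0$ and all sufficiently large $\alpha$, a positive super-solution of $\tilde\sL_\alpha\psi+\lambda\psi^{3-\gamma}\leq 0$ in $\RN$ with $\lambda=-\limsup_{|x|\to\infty}c-\eps$. The explicit choice $\psi(x)=A-\E^{-|x|}$ with $A>1$ (so $\psi\in[A-1,A]$) works: away from the origin, a direct radial computation gives $\ginfdel\psi(x)=-\E^{-(3-\gamma)|x|}$ and $|\grad\psi(x)|^{3-\gamma}=\E^{-(3-\gamma)|x|}$, so $|H(x,\grad\psi)|\leq C\E^{-(3-\gamma)|x|}$. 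Fix $R$ with $c\leq\limsup_{|y|\to\infty}c+\eps/2$ on $\{|x|>R\}$. On that exterior the gradient-supported terms combine into $(C-\alpha)\E^{-(3-\gamma)|x|}\leq 0$ (for $\alpha>C$) while $(c+\lambda)\psi^{3-\gamma}\leq -(\eps/2)(A-1)^{3-\gamma}<0$, so the super-solution inequality holds; on $\{|x|\leq R\}$ the term $\alpha\ginfdel\psi\leq -\alpha\E^{-(3-\gamma)R}$ dominates the bounded remainder once $\alpha$ is large. The non-differentiability of $\psi$ at the origin is handled by the smooth approximation $\psi_\delta(x)=A-\E^{-\sqrt{|x|^2+\delta^2}}$ and stability of viscosity super-solutions; letting $\eps\to 0$ closes~(iii).

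Part~(iv) is the most delicate, and I plan to argue by contradiction. Suppose $\tilde\lambda_{1,\alpha_n}\geq -m+\eps$ along some $\alpha_n\to\infty$, with $m=\liminf_{|x|\to\infty}c$. Pick $R$ so that $c\geq m-\eps/4$ and $g\leq\delta$ on $\RN\setminus B_R$, with $\delta$ small to be fixed. Set $L_n=\alpha_n^{1/(4-\gamma)}$, $\ell_n=K L_n$ with $K$ a large constant, and $\cO_n=B_{R+\ell_n}\setminus B_R$. Monotonicity in the domain gives $\tilde\lambda_{1,\alpha_n}\leq\lambda_{\cO_n}(\tilde\sL_{\alpha_n})$, and Theorem~\ref{AB} produces a Dirichlet principal eigenfunction $\varphi_n>0$ on $\cO_n$ with eigenvalue $\lambda_{\cO_n}\geq -m+\eps$. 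Substituting $c\geq m-\eps/4$ and $|H|\leq\delta|\grad\varphi_n|^{3-\gamma}$ into the eigenvalue equation produces the Dirichlet super-solution inequality $\alpha_n\ginfdel\varphi_n-\delta|\grad\varphi_n|^{3-\gamma}+(3\eps/4)\varphi_n^{3-\gamma}\leq 0$ on $\cO_n$. Rescaling via $w_n(y)=\varphi_n(L_n y)$ and the identity $\ginfdel u(x)=L_n^{-(4-\gamma)}(\ginfdel w_n)(x/L_n)$ converts this into $\ginfdel w_n-\eta_n|\grad w_n|^{3-\gamma}+(3\eps/4)w_n^{3-\gamma}\leq 0$ on $\cO_n/L_n$, with $\eta_n=\delta L_n/\alpha_n\to 0$ (since $\gamma\leq 2<3$) and $\cO_n/L_n\to B_K$. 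Normalising $\|\varphi_n\|_\infty=1$ and invoking the Harnack and boundary Harnack estimates of Sections~\ref{S-Har} and~\ref{S-BHI} to secure compactness, I extract a non-trivial limit $w_\infty\in\tilde\cC^+_0(B_K)$ satisfying $\ginfdel w_\infty+(3\eps/4)w_\infty^{3-\gamma}\leq 0$; the standard Dirichlet super-solution comparison with the principal eigenfunction of $\ginfdel$ on $B_K$ then forces $(3\eps/4)\leq\lambda_{B_K}(\ginfdel)=C_\gamma K^{-(4-\gamma)}$, which fails once $K$ is chosen larger than $(4C_\gamma/(3\eps))^{1/(4-\gamma)}$. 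The main obstacle is this final extraction-and-comparison step: producing the non-trivial limit $w_\infty$ and rigorously passing the viscosity inequality to the limit on the varying annular domains $\cO_n/L_n$, for which the Harnack and boundary estimates established earlier in the paper should be essential.
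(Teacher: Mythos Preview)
Your arguments for parts (i) and (ii) are correct and essentially match the paper's: the lower bound via $\psi\equiv 1$ and the upper bound via monotonicity in the domain and in the potential, combined with the constant-shift identity, are exactly what the paper does.

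Part (iii), however, has a genuine gap. Your claim that on $\{|x|\leq R\}$ one has $\alpha\ginfdel\psi\leq -\alpha\,\E^{-(3-\gamma)R}$ fails at the critical point of $\psi$. The function $\psi(x)=A-\E^{-|x|}$ is not $\cC^2$ at the origin; there, any $(p,M)$ with $|p|<1$ and $M$ arbitrary lies in its subjet, so the viscosity supersolution inequality can be violated by taking $M$ large and positive. Your smoothing $\psi_\delta(x)=A-\E^{-\sqrt{|x|^2+\delta^2}}$ does not cure this: a direct radial computation gives $\ginfdel\psi_\delta(0)=0$, and in fact $\ginfdel\psi_\delta>0$ on a small punctured neighbourhood of the origin (where the radial second derivative is positive). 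Hence at and near $0$ the term $\alpha\ginfdel\psi_\delta$ is nonnegative, and if $c(0)+\lambda>0$ the supersolution inequality fails for every $\alpha$; stability of viscosity supersolutions cannot be invoked because the approximants are themselves not supersolutions. The paper circumvents exactly this obstruction by centering its test function at a point $x_0$ with $|x_0|>R$ (so that $c(x_0)+\lambda<0$) and, crucially, by letting the shape parameters $\delta,\beta$ in $\upsilon=(\E^{\beta\varphi}+\E^{-\beta\varphi})^{-\delta}$ depend on $\alpha$ so that the dominant negative contribution on the compact region grows like $\alpha\delta^{3-\gamma}\beta^{6-\gamma}\to\infty$ while the positive remainders vanish. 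A fixed $\cC^2$ radial profile cannot achieve this: near any smooth interior minimum one has $\ginfdel\geq 0$, and the factor $\alpha$ then works against you.

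For part (iv) your approach is quite different from the paper's and, as you acknowledge, the final extraction step is incomplete. The rescaled domains $\cO_n/L_n=\{R/L_n<|y|<R/L_n+K\}$ are annuli whose inner boundary collapses to the origin, so the limiting domain is the punctured ball $B_K\setminus\{0\}$, not $B_K$; you would need to show that the limit $w_\infty$ extends across the origin and is admissible for the Dirichlet problem on $B_K$. More seriously, the non-triviality of $w_\infty$ is not secured by Harnack alone: the point where $w_n$ attains its maximum could drift toward either boundary of the annulus, and ruling this out requires additional barrier or boundary-oscillation arguments that you have not supplied. The paper avoids all of this by a direct construction: for each fixed $\alpha$ it invokes \cite[Proposition~4.1]{BV20} to produce a compactly supported positive subsolution $\phi^z$ in a ball $\sB_{R_2}(z)$ with $|z|$ large (so that $c>\hat\eta-\eps$ there), giving immediately $\tilde\lambda_{1,\alpha}\leq\lambda_{\sB_{R_2}(z)}(\tilde\sL_\alpha)\leq -\hat\eta+2\eps$ via \eqref{E1.2}. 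No limiting procedure in $\alpha$ is needed.
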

These limits are related to the Liouville type results. Indeed, in Theorem \ref{T1.8} below, we  prove that  (\ref{ET1.7A}) admits a unique positive, bounded  solution if $\lambda_1'(\sL+a)<0$ and admits
no positive bounded solution if  $\plam(\sL+a)>0$. Consider a natural function $c$ with $\sup_{\R^N}c>0$ and $\inf_{\R^N}c<0$ and suppose $\lambda_1'(\sL_\alpha+a)=\lambda_{1,\alpha}$ (under some additional conditions as in Theorem \ref{T1.3}). By Theorem \ref{T1.6}, (i)-(ii), one finds $\alpha^\star\gg1$ and $\alpha_\star\ll-1$ such that equation (\ref{ET1.7A}) admits a unique positive, bounded  solution if $\alpha>\alpha^\star$ and admits no positive bounded solution if $\alpha<\alpha_\star$. For the effect of large diffusion coefficient in Theorem \ref{T1.6} (iii)-(iv), the existence of positive solution of equation (\ref{ET1.7A}) depends strongly on the sign of $c(x)$ near infinity. More precisely, as $\alpha$ large enough, there does not exist any positive, bounded solution for equation (\ref{ET1.7A})  if $\limsup_{|x|\to\infty}c(x)<0$ while equation (\ref{ET1.7A})  always possesses a positive, bounded solution as $\liminf_{|x|\to\infty}c(x)>0$, and moreover the later  statement was proved for fixed $\alpha$ in our previous work \cite{BV20}.

%Next, by using the Harnack inequality, we can also obtain a lower bound on the decay of the eigenfunction.
%\begin{theorem}\label{T1.7}
%Suppose that $\varphi$ is a positive solution to $\sL\varphi + c(x)\varphi^{3-\gamma}=0$ in $\RN$ with $|H(x, q)|\leq C q^{3-\gamma}$
%and $\sup|c|\leq \mu$. Then for any $\alpha\in (0, 1)$, we have
%$$\varphi(x)\geq C_1 \min_{\sB_1}\varphi\, \exp\left(-4|x|\kappa_\alpha
%\Bigl[C + (\nicefrac{\mu}{\alpha^{3-\gamma}})^{\nicefrac{1}{4-\gamma}}\Bigr]\right)\quad \text{for all $|x|$ large},$$
%for some constant $C_1$, where
%$$\kappa_\alpha=\frac{\log (1-2^{-\alpha})}{\alpha-1}.$$
%\end{theorem}

Our final result concerns the existence/nonexistence and uniqueness of solutions for certain class of quasilinear equation in $\RN$. It is  interesting that the quantity $\plam$ plays the key role in this characterization. Before stating the result, let us mention some basic conditions for the nonlinearity $f$. Let $f:\RN\times[0, \infty)\to \R$ be a continuous function satisfying the 
followings :
\begin{itemize}
\item[(F1)] $s\mapsto f(x, s)$ is locally Lipschitz in $s$, locally uniformly in $x$.
\item [(F2)] $f(x, 0)=0$ and for some $M>0$ we have $f(x, M)\leq 0$.
\item[(F3)] For every $x\in\RN$, $\frac{f(x,s)}{s^{3-\gamma}}$ is strictly decreasing in $s$ and
$a(x)\df\lim_{s\searrow 0} \frac{f(x,s)}{s^{3-\gamma}}$ exists, uniformly in $x$. Furthermore, 
\begin{equation}\label{EF3}
\limsup_{|x|\to \infty}a(x)<0\,.
\end{equation}
\end{itemize}
A typical example of $f$ would be $s^{3-\gamma}b(x)(a(x)-s^p)$ for $p>0$ and $b$ is a bounded continuous
function satisfying $\inf_{\RN} b>0$. Our result reads as follows :
\begin{theorem}\label{T1.8}
Consider the following equation
\begin{equation}\label{ET1.7A}
\sL u + f(x, u)\,=\, 0\quad \text{for}\; x\in\RN.
\end{equation}
Suppose that $|H(x, q)|\leq C|q|^{3-\gamma}$. Then
\begin{itemize}
\item[(i)]  If $\plam(\sL+a)>0$,  equation \eqref{ET1.7A} does not admit any  nontrivial, non-negative, bounded solution.
\item[(ii)] If $\plam(\sL+a)<0$, equation \eqref{ET1.7A}  admits a unique positive, bounded solution.
\end{itemize}
\end{theorem}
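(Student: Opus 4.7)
I would argue by contradiction: assume $u \not\equiv 0$ is a non-negative bounded solution of \eqref{ET1.7A}. Writing the equation as $\sL u + c(x) u^{3-\gamma} = 0$ with bounded continuous $c(x) := f(x,u(x))/u(x)^{3-\gamma}$ (set equal to $a$ at the zeros of $u$), the strong maximum principle for $\sL + c$ forces $u > 0$ in $\RN$. Next I show $u(x) \to 0$ as $|x| \to \infty$: the monotonicity in (F3) gives $f(x,u) \leq a(x) u^{3-\gamma}$, and since $a(x) \leq -\delta$ for $|x|$ large by \eqref{EF3}, we have $\sL u \geq \delta u^{3-\gamma}$ outside some ball $B_{R_0}$. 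A direct calculation, using $|H(x,q)| \leq C|q|^{3-\gamma}$, shows that the radial function $v(x) = A \E^{-\sigma |x|}$ satisfies $\sL v - \delta v^{3-\gamma} \leq 0$ in $\{|x| \geq R_0\}$ for sufficiently small $\sigma$ and $A := (\sup_\RN u)\E^{\sigma R_0}$. Since $w \mapsto \sL w - \delta w^{3-\gamma}$ is proper, weak comparison yields $u \leq v$ outside $B_{R_0}$, hence $u \in \cC^+_0(\RN)$. Combined with $\sL u + a(x) u^{3-\gamma} \geq 0$ in $\RN$, the definition of $\plam$ then gives $\plam(\sL + a) \leq 0$, contradicting the hypothesis $\plam(\sL+a) > 0$.

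\textbf{Existence in (ii).} Since $\plam(\sL+a) < 0$, fix $\lambda \in (\plam(\sL+a), 0)$ and a test function $\psi \in \cC^+_0(\RN)$ with $\sL\psi + (a+\lambda)\psi^{3-\gamma} \geq 0$ in $\RN$. On every ball $B_R$, $\psi>0$ is admissible for the characterization \eqref{E1.2}, so $\lambda_R := \lambda_{B_R}(\sL+a) \leq \lambda < 0$ uniformly in $R$. Let $\varphi_R > 0$ be the Dirichlet eigenfunction from \eqref{Dirichlet} normalized by $\|\varphi_R\|_{L^\infty(B_R)} = 1$. Combining the $(3-\gamma)$-homogeneity of $\sL$ with the uniform convergence $f(x,s)/s^{3-\gamma} \to a(x)$ as $s \searrow 0$ from (F3), there exists $\eps_0 > 0$ independent of $R$ such that $\eps_0 \varphi_R$ is a subsolution of \eqref{ET1.7A} in $B_R$, while the constant $M$ from (F2) is a supersolution. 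A Perron-type or monotone iteration argument as in \cite{BV20} produces a solution $u_R$ of the Dirichlet problem in $B_R$ with $\eps_0 \varphi_R \leq u_R \leq M$. The local H\"older regularity and Harnack inequality of Section~\ref{S-Har} supply uniform interior estimates, so a subsequence of $\{u_R\}$ converges locally uniformly to a positive bounded viscosity solution $u$ of \eqref{ET1.7A} in $\RN$.

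\textbf{Uniqueness in (ii).} Let $u_1, u_2 > 0$ be two bounded solutions. By part (i) each lies in $\cC^+_0(\RN)$ with exponential decay, and the boundary Harnack estimate of Section~\ref{S-BHI} renders their decay rates comparable; hence $t^\star := \inf\{t \geq 1 : t u_1 \geq u_2 \text{ in } \RN\}$ is finite. Assume for contradiction $t^\star > 1$. The $(3-\gamma)$-homogeneity of $\sL$ and the strict monotonicity of $f(x,s)/s^{3-\gamma}$ in $s$ show that $t^\star u_1$ is a \emph{strict} viscosity supersolution of \eqref{ET1.7A}. If $t^\star u_1$ touches $u_2$ from above at an interior point, the Hessian/gradient comparison at that point contradicts the strict inequality. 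Otherwise $t^\star u_1 > u_2$ pointwise; the comparability of decays keeps the ratio $u_1/(t^\star u_1 - u_2)$ uniformly bounded near infinity, so one can slide back to $t^\prime < t^\star$ with $t^\prime u_1 \geq u_2$, contradicting minimality. Hence $t^\star = 1$, giving $u_1 \geq u_2$, and exchanging the roles of $u_1, u_2$ yields $u_1 = u_2$.

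\textbf{Main obstacle.} The delicate step is the sliding in the whole space: the infimum defining $t^\star$ may only be approached along sequences escaping to infinity, and the degenerate, non-variational character of $\sL$ rules out energy-type or divergence-form arguments. The crux is a quantitative control of the decay-rate ratio of $u_1$ and $u_2$—so that $u_1/(t^\star u_1 - u_2)$ stays bounded near infinity and the sliding can be performed—which depends on the Harnack inequality of Section~\ref{S-Har} and the boundary Harnack estimate of Section~\ref{S-BHI}, together with a careful viscosity touching comparison for the full operator $\sL$ carrying the gradient Hamiltonian $H$.
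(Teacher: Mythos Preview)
Your overall strategy matches the paper's, but two of the three steps contain real gaps. For the decay in (i), the comparison you invoke between $u$ and $v(x)=A\E^{-\sigma|x|}$ takes place on the exterior $\{|x|>R_0\}$, where you only know $u\leq v$ on $\partial\sB_{R_0}$. Since $v\to0$ while $u$ is merely bounded, the supremum of $u-v$ may be approached along a sequence escaping to infinity, and properness of $w\mapsto\sL w-\delta w^{3-\gamma}$ alone does not furnish a comparison principle without a condition at infinity---which is exactly what you want to prove. The paper (Lemma~\ref{L3.1}, via the argument of Theorem~\ref{T1.3}(ii)) instead argues by contradiction with a \emph{local} barrier: assuming $\limsup u>0$, it touches $u$ from above near a far-out near-maximizer by a shifted paraboloid $\beta+r^{-2}|x-x_0|^2-1$ and obtains a contradiction at the contact point. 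A smaller issue: in the existence step, your $\psi\in\cC^+_0(\RN)$ is strictly positive on $\partial\sB_R$, hence not admissible in \eqref{E1.2} (which requires test functions in $\tilde\cC^+_0(\sB_R)$), so the deduction $\lambda_R\leq\lambda$ is unjustified as written; the paper works directly with $\psi_\varepsilon=\psi-\varepsilon$ on the bounded level set $\{\psi>\varepsilon\}$ to build the subsolution.

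The serious gap is uniqueness. Your $t^\star=\inf\{t\geq1:tu_1\geq u_2\}$ need not be finite: two positive solutions can decay at different exponential rates, and the boundary Harnack of Section~\ref{S-BHI} is a statement about $\cC^2$ boundaries of bounded domains, not about behaviour at infinity, so it cannot deliver the ``comparable decay'' you assert. Even granting $t^\star<\infty$, the slide-back step requires $u_1/(t^\star u_1-u_2)$ to stay bounded at infinity, which is again the unproved comparability. The paper's Theorem~\ref{T3.2} resolves this by an \emph{additive} rather than multiplicative perturbation: it compares $u$ with $v_\varepsilon:=v+\varepsilon$, so that $\liminf_{|x|\to\infty}v_\varepsilon\geq\varepsilon>0=\lim u$ forces the sliding coefficient $t=\max\{s:su<v_\varepsilon\}$ to be positive and the contact to occur in a compact set. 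In the region $\{a<0\}$ one has $f(x,s+\varepsilon)\leq f(x,s)$, so $v_\varepsilon$ remains a supersolution there; on the complementary compact set the strict monotonicity of $s\mapsto f(x,s)/s^{3-\gamma}$ supplies a uniform gap that absorbs the $\varepsilon$-error. This additive trick---not exponential decay control or boundary Harnack---is the key idea you are missing.
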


In the literature, similar
nonlinearity $f$ has been considered widely for other operators: for instance, Berestycki et al. \cite{BHN, BHR} 
studied the problem with nondegenerate elliptic operators whereas Nguyen-Vo \cite{NV15}  used the variational approach to studied the $p$-Laplacian
version of the equation \eqref{ET1.7A}. Equation \eqref{ET1.7A} is treated earlier in \cite{BV20} for a different set of conditions on $f$. In particular, in \cite{BV20}, we   required $\liminf_{|x|\to\infty} a(x)>0$ and obtained existence and uniqueness of bounded solution by showing  that $\lim_{|x|\to\infty}u(x)>0$, which was crucial to invoke a comparison principle. The current framework is in contrast with \cite{BV20} since here we show that $\lim_{|x|\to\infty}u(x)=0$ (see Lemma~\ref{L3.1}) and therefore, the techniques used in \cite{BV20} cannot be applied to tackle the current problem.  The result of Theorem \ref{T1.8} is in fact a Liouville type result, in which the existence of positive solution is characterized by  the sign of a generalized principal eigenvalue $\plam(\sL+a)$ depending on the \textit{zero order term}. This is  theoretically a counterpart to the
work \cite[Theorem~5.1]{ALT}, where nonexistence of positive solution is proved with strong absorption nonlinearity and the work \cite{CP09}, where the existence and uniqueness of positive solution are proved with concave-convex nonlinearities.

%Very recently, 
%Hamilton-Jacobi-Bellman equations (in which infinity Laplacian is a part) coupled with a  KPP-type nonlinearity \cite{CP,PRV,PR} has been used to study mean-field  models, which are used in optimal
%decision making based on stochastic games with a very large population of agents that are statistically identical. The models proposed in those works  are of evolution equations, in which the study of the positive expected stable
%growth in the long run plays the central role. The interested readers are referred to \cite{CP,PRV,PR} for detailed derivation of a precise model.

Lastly, we emphasize that the main novelties in this work is that we deal with the problem with a very general operator involving
infinity Laplacian coupled with a nonlinear gradient term. This  is a complicated nonlinear, degenerate elliptic operator, which neither enjoys  variational
structure (exception of the case $\gamma = 2$ in the two dimensional space \cite{DGIMR}) nor the subadditive
structure as Pucci type nonlinear operators.
Therefore most
of the existing approaches fail to apply in our framework .  For instance, if
we consider two solutions $u, v$, then the inequality satisfied by $u-v$ is not easy to
obtain. Such estimates are crucial to establish uniqueness.
 Many new techniques must be figured out and used
in a clever fashion to establish our results.

\textit{The paper is organized as follows : In the next section we prove the Harnack inequality and Section~\ref{S-Proof} is devoted to the proofs of our main results. Boundary Harnack inequality is established in Section~\ref{S-BHI}.}

%%%%%%%%%%%%%%%%%%%%%%%%%%%%%%%%%%%%%%%%%%%%%%%%%%%%%%%%%%%%%%%%%%%%%%%%%%%%%%%%%%%%%%%%%%%%%%%%%%%%%%%%%%%%%%
\section{Harnack inequality}\label{S-Har}
In this article, we deal with the viscosity solution to the equations of the form
\begin{equation}\label{E2.1}
\sL u + F(x, u)\,=\, 0\quad \text{in}\;  \mathcal{O}, \quad \text{and}
\quad u=g\quad \text{on}\; \partial\mathcal{O},
\end{equation}
where $F$ and $g$ are assumed  to be continuous. For a symmetric matrix $A$ we define
$$M(A)=\max_{\abs{x}=1} \langle x, A x\rangle, \quad m(A)= \min_{\abs{x}=1} \langle x, A x\rangle.$$
We denote by $\sB_r(z)$ the open ball of radius $r$ centered at $z$   and when $z=0$, we simply write this
ball by $\sB_r$.
We use the notation $u\prec_{z}\varphi$ when $\varphi$ touches $u$ from above exactly at the point $z$ i.e.,
for some open ball $\sB_r(z)$ around $z$ we have $u(y)<\varphi(y)$ for $y\in\sB_r(z)\setminus\{z\}$ and
$u(z)=\varphi(z)$.
\begin{definition}[Viscosity solution]
An upper-semicontinuous (lower-semicontinous) function $u$ on $\bar{\mathcal{O}}$ is said to be a viscosity sub-solution (super-solution) of \eqref{E2.1}  if the followings are satisfied :
\begin{itemize}
\item[(i)] $u\leq g$ on $\partial \mathcal{O}$ ($u\geq g$ on $\partial \mathcal{O}$);
\item[(ii)] if $u\prec_{x_0}\varphi$ ($\varphi\prec_{x_0} u$ ) for
some point $x_0\in\mathcal{O}$ and a $\cC^2$ test function $\varphi$, then 
\begin{align*}
& \sL \varphi(x_0) + F(x_0, u(x_0))\,\geq\, 0\,,
\quad \left(\sL \varphi(x_0) + F(x_0, u(x_0))\leq\, 0,\; resp., \right);
\end{align*}
\item[(iii)] for $\gamma=2$, if $u\prec_{x_0}\varphi$ ($\varphi\prec_{x_0} u$) and $\grad\varphi(x_0)=0$ then 
\begin{align*}
& M(D^2\varphi(x_0)) + H(x,  \grad \varphi(x_0)) + F(x_0, u(x_0))\,\geq\, 0\,,
\\
&\left(m(D^2\varphi(x_0)) + H(x,  \grad \varphi(x_0)) + F(x_0, u(x_0))\leq\, 0,\; resp., \right)\,.
\end{align*}
\end{itemize}
We call $u$ a viscosity solution if it is both sub and super solution to \eqref{E2.1}.
\end{definition}
As well known, one can replace the requirement of strict maximum (or minimum) above by non-strict maximum (or minimum). We would also require the notion of superjet and subjet from \cite{CIL}. A second order \textit{superjet} of $u$ at $x_0\in\mathcal{O}$ is defined as
$$J^{2, +}_\mathcal{O} u(x_0)=\{(\grad\varphi(x_0), D^2\varphi(x_0))\; :\; \varphi\; \text{is}\; \cC^2\; \text{and}\; u-\varphi\; \text{has a maximum at}\; x_0\}.$$
The closure of a superjet is given by
\begin{align*}
\bar{J}^{2, +}_\mathcal{O} u(x_0)&=\Bigl\{ (p, X)\in\RN\times\bS^{d\times d}\; :\; \exists \; (p_n, X_n)\in J^{2, +}_\mathcal{O} u(x_n)\; \text{such that}
\\
&\,\qquad  (x_n, u(x_n), p_n, X_n) \to (x_0, u(x_0), p, X)\Bigr\}.
\end{align*}
Similarly, we can also define closure of a subjet, denoted by $\bar{J}^{2, -}_\mathcal{O} u$. See \cite{CIL} for more details.

Next we prove a Harnack inequality.
\begin{theorem}[Harnack inequality]\label{T2.1}
Let $u\in\cC(\bar\sB_1)$ be a non-negative solution to
\begin{equation}\label{ET2.1A}
\ginfdel u - \theta |\grad u|^{3-\gamma} - \mu u^{3-\gamma}\,\leq\, 0 \quad \text{in}\; \sB_1\,,
\end{equation}
for some non-negative $\theta, \mu$.
There exists a constant $C$, dependent only on $\mu, \theta, \gamma, N$,
satisfying
\begin{equation}\label{ET2.1B}
 u(x)\,\leq\, C u(y)\quad \text{for all}\; x, y\in\sB_{\frac{1}{2}}\,.
\end{equation}
\end{theorem}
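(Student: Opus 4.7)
The plan is to establish the Harnack estimate by combining the scaling invariance of the inequality with barrier comparison and a Harnack-chain covering of $\sB_{1/2}$.

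\textbf{Homogeneity reduction.} Each term $\ginfdel u$, $|\nabla u|^{3-\gamma}$ and $u^{3-\gamma}$ is positively $(3-\gamma)$-homogeneous in $u$, so \eqref{ET2.1A} is invariant under $u \mapsto a u$ for any $a > 0$. It therefore suffices to produce constants $r_0 \in (0, 1/4)$ and $C_0 > 0$, depending only on $\mu,\theta,\gamma,N$, such that for every $y \in \sB_{1/2}$ the normalisation $u(y) = 1$ yields $C_0^{-1} \leq u \leq C_0$ on $\sB_{r_0}(y)$. The full estimate \eqref{ET2.1B} then follows by joining any pair of points of $\sB_{1/2}$ through a chain of at most $N_*(N)$ overlapping balls of radius $r_0$ contained in $\sB_1$ and iterating the local bound.

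\textbf{Barriers and comparison.} Both the upper and the lower local bounds are obtained by comparing $u$ against an explicit radial barrier $\phi(x) = f(|x - y|)$. Substituting a radial profile into the operator gives
$$\ginfdel \phi \,=\, |f'|^{2-\gamma}\, f'',\qquad |\nabla \phi|^{3-\gamma} \,=\, |f'|^{3-\gamma},$$
so the viscosity sub/super-solution condition reduces to the one-dimensional inequality
$$|f'|^{2-\gamma} f'' \,\lessgtr\, \theta|f'|^{3-\gamma} + \mu f^{3-\gamma}.$$
For the lower bound I would take $f$ decreasing and concave with $f(0) = 1$ and $f(R) = 0$ (a truncated cone $f(r) = (1 - r/R)_+$ or a smoothed variant), which renders $\phi$ a viscosity subsolution of \eqref{ET2.1A}; comparison with $u$ on $\sB_R(y)$ then yields $u(x) \geq (1 - |x-y|/R)$ on $\sB_R(y)$, so $u \geq 1/2$ on $\sB_{R/2}(y)$. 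For the upper bound I would pick $f$ convex and growing with a controlled singularity near the outer radius (for instance $f(r) = A e^{\beta r}$ or $f(r) = A(R - r)^{-\alpha}$) and tune the parameters so that the reversed ODE inequality holds; the resulting $\phi$ is a classical supersolution dominating $u$ on the boundary of a small annulus around $y$, and the comparison principle of \cite{BV20} forces $u \leq C_0$ on $\sB_{r_0}(y)$.

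\textbf{Main difficulty.} The principal obstacle is the barrier construction and its viscosity admissibility. Since $\ginfdel$ is fully degenerate in directions orthogonal to $\nabla u$, and singular at critical points when $\gamma > 0$, no Krylov--Safonov or ABP machinery is available; the barriers must be designed by hand to match the $\infty$-Laplace structure, and their admissibility has to be verified directly at the apex $y$ where $\nabla\phi$ is discontinuous or vanishes. The normalized case $\gamma = 2$ additionally requires checking the extended viscosity criterion at $\nabla\phi = 0$ (item (iii) of the viscosity-solution definition). Translation invariance of the operator then guarantees that all local constants are independent of $y \in \sB_{1/2}$, so the final constant $C$ depends only on $\mu,\theta,\gamma,N$, as asserted.
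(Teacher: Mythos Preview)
Your overall architecture (radial barrier, comparison, Harnack chain) matches the paper, but two of the three concrete steps are wrong.

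\textbf{The upper bound is both impossible and unnecessary.} Inequality \eqref{ET2.1A} says $u$ is a \emph{supersolution}; to conclude $u\le\phi$ from a comparison principle you would need $u$ to be a subsolution, which is nowhere assumed. A supersolution can be arbitrarily large, so no barrier from above can give a pointwise upper bound. Fortunately it is not needed: once you have the lower estimate $u(z)\ge c_0\,u(y)$ for all $z\in\sB_{r_0}(y)$, this already reads $u(y)\le c_0^{-1}u(z)$, and chaining gives \eqref{ET2.1B}. The paper uses exactly this one-sided argument.

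\textbf{Your lower barrier has the wrong convexity.} You propose $f$ ``decreasing and concave'' (a truncated cone). For such $f$ one has $f''\le 0$, hence $\ginfdel\phi=|f'|^{2-\gamma}f''\le 0$, and
\[
\ginfdel\phi-\theta|\nabla\phi|^{3-\gamma}-\mu\phi^{3-\gamma}\le 0,
\]
which is the \emph{supersolution} inequality, not the subsolution inequality you need to push $u$ up from below. The paper's barrier is $v(x)=r^\alpha-|x|^\alpha$ with $\alpha\in(0,1)$: this is decreasing but \emph{convex} in $|x|$, with $f''(\rho)=\alpha(1-\alpha)\rho^{\alpha-2}>0$ and a derivative singularity at the origin. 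That singularity makes $\ginfdel v\sim \alpha^{3-\gamma}(1-\alpha)|x|^{\alpha(3-\gamma)-4+\gamma}\to+\infty$ as $|x|\to0$, which is precisely what allows it to dominate $\theta|\nabla v|^{3-\gamma}+\mu v^{3-\gamma}$ on a small ball and become a strict subsolution. After centering this barrier at a point $x$ and scaling by $u(x)$, comparison (via doubling of variables) gives $u(y)\ge(1-2^{-\alpha})u(x)$ for $y\in\sB_{r/2}(x)$, and the covering argument finishes.

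So: drop the upper-bound step entirely, and replace the concave cone by the cusp barrier $r^\alpha-|x|^\alpha$; then your outline becomes the paper's proof.
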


\begin{proof}
Due to strong maximum principle \cite[Theorem~2.2]{BV20} we may assume that $u>0$ in $\sB_1$. Otherwise,
$u$ would be identically $0$ and (\ref{ET2.1B}) is trivial. The main idea of the proof is to construct a suitable subsolution and then use
comparison principle \cite[Theorem~2.1]{BV20}. A similar idea is also used in \cite{TB01} for infinite
super-harmonic functions.
Let $r>0$ be chosen
later. Fix $\alpha\in (0, 1)$ and define $v(x)=r^\alpha-|x|^\alpha$. Then an easy calculation gives us
$$\ginfdel v - \theta |\grad v|^{3-\gamma} - \mu v^{3-\gamma}\geq \alpha^{3-\gamma} (1-\alpha) |x|^{\alpha(3-\gamma)-4+\gamma} - \theta \alpha^{3-\gamma}|x|^{(\alpha-1)(3-\gamma)} - \mu r^{(3-\gamma)\alpha}\quad \text{in}\; \sB_r\setminus\{0\}\,.$$
Note that $\alpha(3-\gamma)-4+\gamma<(\alpha-1)(3-\gamma)<0$ and therefore, 
choosing
$$r<r_\circ\df \frac{1-\alpha}{2}\frac{1}{\theta + (\nicefrac{\mu}{\alpha^{3-\gamma}})^{\nicefrac{1}{4-\gamma}}},$$
we get
\begin{align*}
\ginfdel v - \theta |\grad v|^{3-\gamma} - \mu v^{3-\gamma} 
&\geq \alpha^{3-\gamma} |x|^{\alpha(3-\gamma)-4+\gamma}[(1-\alpha)-\theta |x|] - \mu r^{(3-\gamma)\alpha}
\\
&\geq \alpha^{3-\gamma} r^{\alpha(3-\gamma)-4+\gamma}[(1-\alpha)-\theta r] - \mu r^{(3-\gamma)\alpha}
\\
&= \alpha^{3-\gamma} r^{\alpha(3-\gamma)-4+\gamma} \left[1-\alpha - \theta r - \frac{\mu}{\alpha^{3-\gamma}}
r^{4-\gamma}\right]
\\
&>0,
\end{align*}
for $x\in \sB_r\setminus\{0\}$. Fixing any $r\in (0, r_\circ\wedge1)$ we show that 
\begin{equation}\label{ET2.1C}
u(x)\leq Cu(y)\quad \text{for all}\; x, y\in \sB_{r/4}.
\end{equation}
\eqref{ET2.1B} then follows from above and a standard covering argument. Pick $x\in \sB_{r/4}$
and let $w(z)= u(x) \frac{v(z-x)}{r^\alpha}$. Then we have $w(x)=u(x)$ and for $\Omega=\sB_r(x)\setminus\{x\}$
\begin{equation}\label{ET2.1D}
\ginfdel w(z) - \theta |\grad w(z)|^{3-\gamma} - \mu w^{3-\gamma}(z)\,\geq\,\zeta(z)\quad \text{in}\; \Omega,
\end{equation}
for some function $\zeta$ which is positive and continuous in $\Omega$. Using comparison principle
in \cite[Theorem~2.1]{BV20} we can show that $u\geq w$ in $\sB_r(x)$. We detail the arguments here for convenience since similar arguments will be used in several places of this article.

Suppose that $M\df\sup_{\Omega} (w-u)>0$. Since $w\leq u$ in $\partial\Omega$, it is evident that the value $M$
is attained inside $\Omega$. Now define the coupling function
$$\Theta_\varepsilon(x, y)= w(x)-u(y) -\frac{1}{4\varepsilon}\abs{x-y}^4 \quad \text{for}\; x, y\in\bar{\Omega}.$$
Note that the maximum of $\Theta_\varepsilon$ (say, $M_\varepsilon$) is larger than $M$ for all $\varepsilon$.
Let $(x_\varepsilon, y_\varepsilon)\in\bar\Omega\times\bar\Omega$ be a point of maximum for 
$\Theta_\varepsilon$.
It is then standard to show that (cf. \cite[Lemma~3.1]{CIL})
\begin{align*}
\lim_{\varepsilon\to 0} M_\varepsilon=M, \quad 
\lim_{\varepsilon\to 0}\frac{1}{4\varepsilon}\abs{x_\varepsilon-y_\varepsilon}^4=0.
\end{align*}
This clearly implies that $w(x_\varepsilon)-u(y_\varepsilon)\searrow M$, as $\varepsilon\to 0$. Again, since the maximizer  cannot move towards the boundary
we can find a subset $\mathcal{O}_1\Subset\Omega$ such that 
$x_\varepsilon, y_\varepsilon\in\mathcal{O}_1$ for all $\varepsilon$ small.
Now $u, v$ are Lipschitz continuous in $\mathcal{O}_1$, by \cite[Lemma~2.1]{BV20}, and
therefore we can find a constant
$\kappa$ such that
$$\abs{u(z_1)-u(z_2)} + \abs{w(z_1)-w(z_2)}\leq \kappa\abs{z_1-z_2}\quad z_1, z_2\in\mathcal{O}_1.$$
Observing
$$w(x_\varepsilon)-u(x_\varepsilon)\leq w(x_\varepsilon)-u(y_\varepsilon) -\frac{1}{4\varepsilon}\abs{x_\varepsilon-y_\varepsilon}^4,$$
we obtain 
\begin{equation}\label{ET2.1E}
\abs{x_\varepsilon-y_\varepsilon}^3\,\leq\, 4\varepsilon \kappa.
\end{equation}
Denote by $\eta_\varepsilon=\frac{1}{\varepsilon}\abs{x_\varepsilon-y_\varepsilon}^2(x_\varepsilon-y_\varepsilon)$ and $\beta_\varepsilon(x, y)=\frac{1}{4\varepsilon}\abs{x-y}^4$.
It then follows from \cite[Theorem~3.2]{CIL} that for some $X, Y\in\bS^{d\times d}$ we have $(\eta_\varepsilon , X)\in\bar{J}^{2, +}_\mathcal{O} w(x_\varepsilon)$,
$(\eta_\varepsilon, Y)\in\bar{J}^{2, -}_\mathcal{O} u(y_\varepsilon)$ and
\begin{equation}\label{ET2.1F}
\begin{pmatrix}
X & 0\\
0 & -Y
\end{pmatrix}
\leq 
D^2\beta_\varepsilon(x_\varepsilon, y_\varepsilon) + \varepsilon [D^2\beta_\varepsilon(x_\varepsilon, y_\varepsilon)]^2.
\end{equation}
In particular, we get $X\leq Y$. Moreover, if $\eta_\varepsilon=0$, we have $x_\varepsilon= y_\varepsilon$. Then from \eqref{ET2.1F} it follows that
\begin{equation}\label{ET2.1G}
\begin{pmatrix}
X & 0\\
0 & -Y
\end{pmatrix}
\leq 
\begin{pmatrix}
0 & 0\\
0 & 0
\end{pmatrix}.
\end{equation}
In particular, \eqref{ET2.1G} implies that $X\leq 0\leq Y$ and therefore, $M(X)\leq 0\leq m(Y)$.
Applying the definition of superjet and subjet we now obtain for $\eta_\varepsilon\neq 0$
\begin{align}
\zeta(x_\varepsilon)&\leq \abs{\eta_\varepsilon}^{-\gamma} \langle \eta_\varepsilon X, \eta_\varepsilon \rangle 
-\theta|\eta_\varepsilon|^{3-\gamma}-\mu (w(x_\varepsilon))^{3-\gamma}\nonumber
\\
& \leq \abs{\eta_\varepsilon}^{-\gamma} \langle \eta_\varepsilon Y, \eta_\varepsilon \rangle 
-\theta|\eta_\varepsilon|^{3-\gamma}-\mu (w(x_\varepsilon))^{3-\gamma}\nonumber
\\
&\leq \mu u^{3-\gamma}(y_\varepsilon)- \mu (w(x_\varepsilon))^{3-\gamma}\,.\label{ET2.1H}
\end{align}
Now, letting $\varepsilon\to 0$, we can find a point $z\in\cO_1$ so that $x_\varepsilon, y_\varepsilon\to z$,
up to a subsequence. This yields 
$$0<\zeta(z)\leq \mu (u^{3-\gamma}(z)-w^{3-\gamma}(z))\leq 0\,,$$
since $M=w(z)-u(z)> 0$. This is clearly a contradiction, and therefore, we get $w\leq u$ in $\sB_r(x)$.
Similar argument also works when $\eta_\varepsilon=0$.
Note that if $y\in \sB_{r/4}$, then $y\in \sB_{r/2}(x)$ and thus
$$u(y) \geq w(y)=u(x) (1-r^{-\alpha}|y-x|^\alpha)\geq u(x) (1- 2^{-\alpha}).$$
This gives \eqref{ET2.1C}.
\end{proof}

Combining the proof of Theorem~\ref{T2.1} and the ideas from \cite{TB07} we could also establish a boundary Harnack inequality.
Since we do not use this estimate in this article, we defer the proof to Section~\ref{S-BHI} as of independent interest, which may be used for other works in the future.

\section{Proofs of main results}\label{S-Proof}
In this section we prove our main results. Let us start discussing the finiteness of $\lambda_1$.
\subsection{Finiteness of $\lambda_1$}\label{S-finite}
We begin with a situation where $\lambda_1=-\infty$. We should compare this result with \cite[Proposition~2.6]{BR15}.
\begin{proposition}
Suppose that for some $C$ we have $|H(x, q)|\leq C |q|^{3-\gamma}$ for all $x, q$. Furthermore, assume that
for some $\delta>0$ there exists $x_n\in\RN$ such that
$$\lim_{n\to\infty}\; \inf_{z\in\sB_\delta(x_n)} c(z)=\infty.$$
Then we have $\lambda_1(\sL+c)=-\infty$.
\end{proposition}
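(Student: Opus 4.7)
The plan is to argue by contradiction. Suppose $\lambda_1(\sL+c)>-\infty$ and fix any $\lambda<\lambda_1(\sL+c)$. By the definition of $\lambda_1$ there exists a positive $\psi\in\cC(\RN)$ satisfying $\sL\psi+(c(x)+\lambda)\psi^{3-\gamma}\leq 0$ in $\RN$. For each $n$, the restriction of $\psi$ to the compact ball $\bar\sB_\delta(x_n)$ is strictly positive and continuous, hence $\inf_{\sB_\delta(x_n)}\psi>0$, so $\psi$ is admissible in the sup-characterization \eqref{E1.1} of the Dirichlet eigenvalue on $\sB_\delta(x_n)$. This produces the uniform lower bound
$$\lambda_{\sB_\delta(x_n)}(\sL+c)\;\geq\;\lambda\quad\text{for every}\;n,$$
and my plan is to force a contradiction by proving $\lambda_{\sB_\delta(x_n)}(\sL+c)\to-\infty$.

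Set $c_n\df\inf_{\sB_\delta(x_n)}c$, so $c_n\to\infty$ by hypothesis. I would first reduce to the potential-free operator via the two-step bound
$$\lambda_{\sB_\delta(x_n)}(\sL+c)\;\leq\;\lambda_{\sB_\delta(x_n)}(\sL+c_n)\;=\;\lambda_{\sB_\delta(x_n)}(\sL)-c_n.$$
The inequality is monotonicity of $\lambda_\cO(\sL+\cdot)$ in the zero-order potential (any $\psi$ admissible for $\sL+c$ is admissible for $\sL+c_n$ since $c\geq c_n$ pointwise and $\psi^{3-\gamma}>0$), and the equality for the constant potential is the change of variable $\mu=c_n+\lambda$ in \eqref{E1.1}. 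The problem thus reduces to a uniform upper bound on $\lambda_{\sB_\delta(x_n)}(\sL)$.

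For the uniform bound I would introduce the translation-invariant auxiliary operator
$$\sL^{*}u\;\df\;\ginfdel u-C\abs{\grad u}^{3-\gamma},$$
whose Hamiltonian $H^{*}(q)=-C\abs{q}^{3-\gamma}$ satisfies hypotheses (a)--(c) with no $x$-dependence. By \cite[Theorem~3.1]{BV20}, the Dirichlet principal eigenvalue $K^{*}\df\lambda_{\sB_\delta}(\sL^{*})$ is finite and admits a positive eigenfunction $v^{*}\in\tilde\cC^+_0(\sB_\delta)$ solving $\sL^{*}v^{*}+K^{*}(v^{*})^{3-\gamma}=0$; translation invariance then gives $v_n(\cdot)\df v^{*}(\cdot-x_n)\in\tilde\cC^+_0(\sB_\delta(x_n))$ solving the same equation on $\sB_\delta(x_n)$. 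Since $H(x,q)\geq-C\abs{q}^{3-\gamma}=H^{*}(q)$, at any smooth $\varphi$ touching $v_n$ from above at a point $x_0$ with $\grad\varphi(x_0)\neq 0$ the sub-solution inequality
$$\ginfdel\varphi(x_0)-C\abs{\grad\varphi(x_0)}^{3-\gamma}+K^{*}\varphi(x_0)^{3-\gamma}\geq 0$$
immediately yields $\sL\varphi(x_0)+K^{*}\varphi(x_0)^{3-\gamma}\geq 0$. Hence $v_n$ is also a viscosity sub-solution of $\sL u+K^{*}u^{3-\gamma}=0$ on $\sB_\delta(x_n)$, admissible in the inf-characterization \eqref{E1.2}, so $\lambda_{\sB_\delta(x_n)}(\sL)\leq K^{*}$ for every $n$.

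Combining the estimates gives $\lambda_{\sB_\delta(x_n)}(\sL+c)\leq K^{*}-c_n\to-\infty$, the desired contradiction. The main technical delicacy I foresee is carrying out the sub-solution transfer at critical points $\grad\varphi(x_0)=0$: for $\gamma=2$ the reduction uses item (iii) of the viscosity definition together with $H^{*}(0)=H(x_0,0)=0$, while for $\gamma\in[0,2)$ the operators $\sL$ and $\sL^{*}$ coincide at such test functions under the natural null interpretation of $\ginfdel\varphi$.
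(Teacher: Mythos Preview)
Your argument is correct and complete, but it takes a genuinely different route from the paper. The paper also reduces to showing $\lambda_{\sB_\delta(x_n)}(\sL+c)\to-\infty$, but does so by an \emph{explicit} construction: it sets $\varphi_n(x)=e^{-k|x-x_n|^2}-e^{-k}$ and verifies by direct computation that, for $k$ large and then $n$ large, $\sL\varphi_n+(c(x)-m)\varphi_n^{3-\gamma}>0$ on $\sB_\delta(x_n)$, invoking \eqref{E1.2}. Your proof is more structural: you peel off the potential via monotonicity ($\lambda_{\sB_\delta(x_n)}(\sL+c)\leq\lambda_{\sB_\delta(x_n)}(\sL)-c_n$) and then control $\lambda_{\sB_\delta(x_n)}(\sL)$ uniformly by comparison with the translation-invariant operator $\sL^{*}=\ginfdel-C|\grad\cdot|^{3-\gamma}$, importing the Dirichlet eigenfunction from \cite[Theorem~3.1]{BV20} and transferring it to $\sL$ via $H\geq H^{*}$. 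The paper's approach is entirely self-contained and elementary (one explicit calculation), whereas yours is computation-free but leans on the existence theorem in \cite{BV20}; both reach the same endpoint, and your handling of the $\grad\varphi(x_0)=0$ case is adequate.
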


\begin{proof}
Without any loss of generality we assume that $\delta=1$. Recall that 
$\lambda_{\mathcal{O}}(\sL+c)$ in \eqref{E1.1} decreases
with increasing domain. Thus it is enough to show that for any $m$ there exists a ball $\sB_1(x_n)$ such that
$\lambda_{\sB_1(x_n)}(\sL+c)\leq -m$.
Let $\varphi(x)=(e^{-k\abs{x}^2}-e^{-k})$ and $\varphi_n(x)=\varphi(x-x_n)$.
We show that for some large $k$  we have 
\begin{equation}\label{EP3.1A}
\ginfdel \varphi_n + H(x, \grad\varphi_n) + (c(x)-m)\varphi_n^{3-\gamma} \,>\, 0\quad \text{in}\; 
\sB_1(x_n).
\end{equation}
Then, by \eqref{E1.2}, it follows that $\lambda_{\sB_1(x_n)}(\sL+c) \leq -m$, which proves the result.
A direct computation yields, for $x\in\sB_1(x_n)$,
\begin{align*}
&\ginfdel \varphi_n + H(x, \grad\varphi_n) + (c(x)-m)\varphi_n^{3-\gamma}
\\
&\geq (2k\abs{x-x_n})^{4-\gamma}  e^{-(3-\gamma)k\abs{x}^2} - (2 k)^{3-\gamma} \abs{x-x_n}^{2-\gamma} e^{-(3-\gamma)k\abs{x}^2} 
\\
&\quad -C (2k\abs{x-x_n})^{3-\gamma}  e^{-(3-\gamma)k\abs{x-x_n}^2} + (c(x)-m) \left( e^{-k\abs{x-x_n}^2}-e^{-k}\right)^{3-\gamma}
\\
&\geq e^{-(3-\gamma)k\abs{x-x_n}^2} \Bigl[(2k\abs{x-x_n})^{4-\gamma} - (2 k)^{3-\gamma} \abs{x-x_n}^{2-\gamma} -
C (2k\abs{x-x_n})^{3-\gamma} 
\\
&\qquad + (\inf_{\sB_1(x_n)}c-m) \left(1-e^{-k(1-\abs{x-x_n}^2)}\right)^{3-\gamma}\Bigr].
\end{align*}
Now choose $k$ large enough so that for $1/2\leq \abs{x-x_n}\leq 1$ we have
$$(2k\abs{x-x_n})^{4-\gamma} - (2 k)^{3-\gamma} \abs{x-x_n}^{2-\gamma}
 -C (2k\abs{x-x_n})^{3-\gamma}\,>\, 0\,.$$
With this choice of 
$k$, we choose $n$ large so that for $|x-x_n|\leq 1/2$ we get 
\begin{align*}
&(2k\abs{x-x_n})^{4-\gamma} - (2 k)^{3-\gamma} \abs{x-x_n}^{2-\gamma} -
K (2k\abs{x-x_n})^{3-\gamma}
\\
&\qquad + \left[\inf_{\sB_1(x_n)}c-m\right] \left(1-e^{-k(1-\abs{x-x_n}^2)}\right)^{3-\gamma}>0.
\end{align*}
Note that for $\gamma=2$ we have to modify the calculation at $x=0$, but the estimate holds. 
Combining we have \eqref{EP3.1A}. This completes the proof.
\end{proof}

Next we provide sufficient conditions for finiteness of $\lambda_1$.
\begin{proposition}\label{P3.2}
Suppose that $H$ is anti-symmetric in $q$ (that is, $H(x, -q)=-H(x,q)$ for all $x,q$)
 and one of the following holds:
\begin{itemize}
\item[(a)] $H(x, x)\geq 0$ for large $|x|$ and 
\begin{equation}\label{EP3.2A}
\limsup_{|x|\to\infty}\frac{c(x)}{|x|^{\gamma-3}H(x, x)+1}\,<\, \infty\,.
\end{equation}
\item[(b)] $H(x, -x)\geq 0$ for large $|x|$ and 
\begin{equation*}
\limsup_{|x|\to\infty}\frac{c(x)}{|x|^{\gamma-3}H(x, -x)+1}\,<\, \infty\,.
\end{equation*}
\end{itemize}
Then $\lambda_1(\sL+c)>-\infty$.
\end{proposition}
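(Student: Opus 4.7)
The plan is to construct, in each case, an explicit smooth positive test function $\psi$ on $\RN$ together with a sufficiently negative real number $\lambda$ such that $\sL\psi + (c+\lambda)\psi^{3-\gamma}\leq 0$ in $\RN$ in the viscosity sense; by the definition of $\lambda_1$ this gives $\lambda_1(\sL+c)\geq \lambda > -\infty$. In case (a) I would take $\psi(x) = e^{-\alpha g(x)}$ with $g(x) = \sqrt{1+|x|^2}$ and $\alpha > 0$ a parameter to be chosen large. Then $\psi$ is smooth, positive, and $\grad\psi = -\alpha(x/g)\psi$ vanishes only at the origin. Writing $\grad\psi = (\alpha\psi/g)(-x)$ with $\alpha\psi/g > 0$, positive $(3-\gamma)$-homogeneity combined with anti-symmetry of $H$ gives the key identity
\begin{equation*}
H(x, \grad\psi) \,=\, (\alpha\psi/g)^{3-\gamma} H(x, -x) \,=\, -(\alpha\psi/g)^{3-\gamma} H(x, x),
\end{equation*}
while a direct differentiation yields
\begin{equation*}
\frac{\ginfdel\psi}{\psi^{3-\gamma}} \,=\, \alpha^{2-\gamma}|x|^{2-\gamma}\bigl[-\alpha g^{\gamma-3} + \alpha|x|^2 g^{\gamma-5} + \alpha^2|x|^2 g^{\gamma-4}\bigr],
\end{equation*}
which is bounded on $\RN\setminus\{0\}$ and satisfies $\ginfdel\psi/\psi^{3-\gamma}\to \alpha^{4-\gamma}$ as $|x|\to\infty$.

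Combining these and invoking (a), which provides constants $M, R_0$ with $c(x) \leq M(|x|^{\gamma-3}H(x,x)+1)$ and $H(x,x)\geq 0$ for $|x|\geq R_0$, one gets for $|x|$ large
\begin{equation*}
\frac{\sL\psi}{\psi^{3-\gamma}} + c(x) + \lambda \,\leq\, \alpha^{4-\gamma} + M + \lambda + o(1) + \bigl(M - \alpha^{3-\gamma}(1+o(1))\bigr)|x|^{\gamma-3}H(x,x),
\end{equation*}
with the $o(1)$ terms going to $0$ as $|x|\to\infty$. Choosing $\alpha$ with $\alpha^{3-\gamma} > 2M$ makes the coefficient of the non-negative quantity $|x|^{\gamma-3}H(x,x)$ negative for all sufficiently large $|x|$, so that term is $\leq 0$; then taking $\lambda \leq -\alpha^{4-\gamma}-M-1$ forces the constant part below $-1 + o(1)\leq 0$. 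The inequality therefore holds outside some ball $\sB_R$. Inside $\bar{\sB}_R$, all of $\psi$, $\grad\psi$, $D^2\psi$, $H(x, \grad\psi)$ (bounded through the local growth condition on $H$ from the Introduction) and $c$ are bounded, so by further decreasing $\lambda$ the inequality holds throughout $\RN$.

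Case (b) is entirely parallel with $\psi(x) = e^{\alpha g(x)}$: now $\grad\psi = (\alpha\psi/g)x$ and $H(x, \grad\psi)/\psi^{3-\gamma} = (\alpha/g)^{3-\gamma}H(x,x) = -(\alpha/g)^{3-\gamma}H(x,-x)\leq 0$ for large $|x|$ under (b), and the same absorption against $c(x) \leq M(|x|^{\gamma-3}H(x,-x)+1)$ goes through verbatim. The only bookkeeping subtlety (and the main place where care is required) is the critical point $x=0$ of $\psi$. For $\gamma < 2$ the operator $\ginfdel\psi$ extends continuously by zero there (the numerator $\sum \psi_i\psi_j\psi_{ij}$ vanishes at order $|\grad\psi|^2$, which dominates $|\grad\psi|^\gamma$), so the inequality at the origin is just a finite upper bound on $\lambda$ absorbed by the compact-set step. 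For $\gamma = 2$, clause (iii) of the viscosity definition in Section~\ref{S-Har} instead requires $m(D^2\psi(0)) + H(0,0) + (c(0)+\lambda)\psi(0)^{3-\gamma}\leq 0$; since $D^2\psi(0)$ is a scalar multiple of the identity with an explicit eigenvalue, $H(0,0) = 0$ by positive homogeneity, and $\psi(0) > 0$, this again reduces to a finite upper bound on $\lambda$ and is handled by the same step.
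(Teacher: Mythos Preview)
Your proof is correct and follows essentially the same approach as the paper: both construct a test function of the form $e^{\pm\alpha\phi}$ with $\phi$ a smooth substitute for $|x|$, and then use anti-symmetry and positive $(3-\gamma)$-homogeneity of $H$ to cancel the growth of $c$ against the $H$-term. The only differences are cosmetic: the paper takes $\phi(x)=\psi_1(|x|)$ with a $\cC^2$ convex $\psi_1$ satisfying $\psi_1(r)=r^2$ near $0$ and $\psi_1(r)=r$ for $r\geq 1$, while you use $\phi(x)=\sqrt{1+|x|^2}$; and the paper selects the exponent $\sigma$ in a single step rather than your two-step ``choose $\alpha$ large, then $\lambda$ very negative'' scheme. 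Your treatment of the critical point $x=0$ (distinguishing the cases $\gamma<2$ and $\gamma=2$ via clause~(iii) of the viscosity definition) is actually more careful than the paper, which simply asserts that $|\ginfdel\psi|$ is bounded.
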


\begin{proof}
Consider a $\cC^2$ convex function $\psi_1:\R\to[0, \infty)$ with the property that $\psi_1(r)=r^2$
for $|r|\ll 1$ and $\psi_1(r)=r$ for all $|r|\geq 1$. Let $\psi(x)=\psi_1(|x|)$. Consider (a) first. 
Using \eqref{EP3.2A} find $\sigma>0$ be such that
\begin{equation}\label{EP3.2B1}
\sigma^{3-\gamma} \left(|x|^{\gamma-3}H(x, x) + 1\right)\geq c(x) \quad \text{for all}\; |x|\geq 1.
\end{equation}
Let $g(x)=\exp(-\sigma\psi(x))$. An easy calculation shows for $x\neq 0$
\begin{align*}
\sL g+c(x) g^{3-\gamma} & = (-\sigma^{3-\gamma}\ginfdel\psi(x)  + \sigma^{4-\gamma}|\grad \psi|^{4-\gamma}
-\sigma^{3-\gamma} H(x, \grad\psi(x)) + c(x))g^{3-\gamma}(x).
\end{align*}
Since $\grad\psi(x)=\frac{x}{|x|}$ for all large $x$, it follows from \eqref{EP3.2B1} that
$$\sL g+c(x) g^{3-\gamma}\leq (-\sigma^{3-\gamma}\ginfdel\psi(x)  + \sigma^{4-\gamma}|\grad \psi|^{4-\gamma}
+(\sigma^{3-\gamma} + 1)\kappa)g^{3-\gamma}(x),$$
for some constant $\kappa$.
Since $|\grad \psi(x)|$ and $|\ginfdel\psi(x)|$ are bounded we can find a constant $\kappa_1$ such that
$\sL g+c(x) g^{3-\gamma}\leq \kappa_1 g^{3-\gamma}$ implying $\lambda_1(\sL+c)\geq -\kappa_1$. This completes 
the proof under (a). For (b), repeat the same argument with $g(x)=\exp(\sigma\psi(x))$.
\end{proof}

\subsection{Proofs of Theorems~\ref{AB}-\ref{T1.6}}
We continue with the proofs of our main results. We begin with the proof of Theorem~\ref{AB}.
\begin{proof}[Proof of Theorem~\ref{AB}]
By \cite[Theorem~3.1]{BV20} there exists a $\varphi\in \cC(\bar\cO)$ with $\varphi>0$ in $\cO$ and satisfying
\begin{equation}\label{EAB1}
\sL \varphi + (c(x)+\lambda_{\cO})\varphi^{3-\gamma}=0\quad \text{in}\; \cO, \quad
\varphi=0\quad \text{on}\; \partial\cO.
\end{equation}
Then from \eqref{5.13.1} it follows that $\lambda_1(\cO, \sL+c)\geq \lambda_\cO$. To complete the proof
we show that there is no $\lambda>\lambda_\cO$ and $\psi\in \cC(\bar\cO)$ with $\psi>0$ in $\cO$
satisfying
$$\sL \psi + (c(x)+\lambda)\psi^{3-\gamma}\leq 0\quad \text{in}\; \cO.$$
Suppose, on the contrary, that such $\psi$ exists for some $\lambda>\lambda_\cO$, that is,
\begin{equation}\label{EAB2}
\sL \psi + (c(x)+\lambda)\psi^{3-\gamma}\leq 0\quad \text{in}\; \cO, \quad \psi>0\quad \text{in}\; \cO.
\end{equation}
Normalize $\psi$ so that $\norm{\psi}_\infty=1$.
Let $\alpha\in (0, 1)$ be such that $\alpha^{3-\gamma}(c(x)+\lambda)>c(x)+\lambda_\cO$ for all $x\in\bar\cO$.
Define $v=\psi^\alpha$ and an easy calculation gives from \eqref{EAB2} that
$$\sL v +\frac{1-\alpha}{\alpha}\frac{1}{v}|\grad v|^{4-\gamma}+
 \alpha^{3-\gamma}(c(x)+\lambda) v^{3-\gamma} \leq\, 0\quad \text{in}\; \cO,$$
which implies
\begin{equation}\label{EAB3}
\sL v + \alpha^{3-\gamma}(c(x)+\lambda) v^{3-\gamma} \leq\, 0\quad \text{in}\; \cO\,.
\end{equation}
Using \cite[Theorem~2.2]{BV20} we obtain that $\psi(x)\geq \kappa \dist(x, \partial\cO)$ for $x\in\cO$,
for some constant $\kappa>0$. Hence
\begin{equation}\label{EAB4}
v(x)\;\geq\; \kappa^\alpha (\dist(x, \partial\cO))^\alpha\quad\text{for}\; x\in\cO.
\end{equation}

We next claim that $\varphi(x)\lesssim \dist(x, \partial\cO)$ if we normalize $\varphi$ to satisfy 
$\norm{\varphi}_\infty=1$. We write \eqref{EAB1} as
\begin{equation}\label{EAB5}
\sL\varphi \leq \norm{c+\lambda_\cO}_\infty\df C_1\quad \text{in}\;\cO.
\end{equation}
Since $|H(x, q)|\leq C |q|^{3-\gamma}$ for all $x\in\cO$, we see that for $\theta(x)=r^{-2}-|x|^{-2}$
we have
$$\ginfdel\theta(x) + C|\grad\theta|^{3-\gamma}
\leq - 3\, 2^{3-\gamma}|x|^{-10+3\gamma} + C 2^{3-\gamma} |x|^{-9+3\gamma}\quad \text{for}\; |x|>r\,.$$
Choosing $r$ small enough we see that
$$\sL \theta<C_1-1\quad \text{for}\; r<|x|\leq 2r.$$
We can translate $\theta$ to any point and base it on some annulus $D$ where the inner sphere touches 
$\cO$ from outside. Applying comparison principle \cite[Theorem~2.1]{BV20} in 
\eqref{EAB5} it follows that $\varphi\leq \kappa_1\theta$
in $D\cap\cO$, for some $\kappa_1$ depending on $r$. This implies that $\varphi(x)\leq \kappa_2 \dist(x, \partial\cO)$ for $x\in\cO$, for
some constant $\kappa_2$.

Define $w_1=\log v$ and $w_2=\log \varphi$. Since $\psi, \varphi$ are locally Lipschitz in $\cO$, we have
$w_1, w_2$ locally Lipschitz in $\cO$. Furthermore, we also have from \eqref{EAB1} and \eqref{EAB3} that
\begin{align}
\sL w_1 + |\grad w_1|^{4-\gamma} + \alpha^{3-\gamma}(c(x)+\lambda)& \leq \, 0,\label{EAB6}
\\
\sL w_2 + |\grad w_2|^{4-\gamma} + (c(x)+\lambda_\cO)&=0,\label{EAB7}
\end{align}
in $\cO$. Also, note from \eqref{EAB4} that
$$w_1(x)-w_2(x)=\log\frac{v(x)}{\varphi(x)}\geq \log 
\left[\frac{\kappa^\alpha}{\kappa_2}\dist(x, \partial\cO)^{\alpha-1}\right]\to \infty, $$
as $x$ approaches the boundary $\partial\cO$. Hence we can apply comparison principle (the proof of \cite[Theorem~2.1]{BV20}
works for these equations) to \eqref{EAB6}-\eqref{EAB7} to conclude that $w_1\geq w_2$ in $\cO$. But this is
not possible since $w_1-k$ also satisfies \eqref{EAB6} for any $k>0$. Therefore, there is no $\psi$ satisfying
\eqref{EAB2}. This completes the proof.
\end{proof}
Next we prove Theorem~\ref{T1.1}
\begin{proof}[Proof of Theorem~\ref{T1.1}]
We complete the proof in two steps.

{\bf Step 1.} Denote by $\lambda_n=\lambda_{\sB_n}(\sL+c)$
and let $\varphi_n$ be a positive eigenfunction corresponding to
it. Existence of such eigenfunction follows from \cite[Theorem~3.1]{BV20}. Then
\begin{equation}\label{ET1.1A}
\sL\varphi_n + c(x)\varphi_n^{3-\gamma}(x)=-\lambda_n\varphi^{3-\gamma}_n\quad \text{in}\; \sB_n,
\quad \text{and}\quad \varphi_n=0\; \text{on}\; \partial\sB_n.
\end{equation}
Normalize $\varphi_n$ by fixing $\varphi_n(0)=1$. Applying Harnack inequality, Theorem~\ref{T2.1}, we note
that for any compact $K$ we have a constant $C_K$ satisfying
$$\sup_{n}\; \sup_{K} \varphi_n\;\leq\; C_K.$$
Thus, by \cite[Lemma~2.1]{BV20}, we have $\{\varphi_n\}$ locally Lipschitz, uniformly in $n$. Hence
 we can extract a 
subsequence that converges to a locally Lipschitz function $\varphi\in \cC(\RN)$ and by stability property
of viscosity solution we would also have from \eqref{ET1.1A} that
$$\sL\varphi + c(x)\varphi^{3-\gamma}=-\tilde\lambda\varphi^{3-\gamma}\quad \text{in}\; \RN,$$
where $\tilde\lambda=\lim_{n\to\infty}\lambda_n$. Since $\lambda_1\leq\lambda_n$
for all $n$, $\tilde\lambda=-\infty$ implies that $\lambda_1=-\infty$.
Therefore, we must have $\tilde\lambda>-\infty$. It is also evident that $\tilde\lambda\geq\lambda_1$. Now by
strong maximum principle \cite[Theorem~2.2]{BV20} we get from above that $\varphi>0$ in $\RN$. Thus
$\varphi$ is an admissible function the definition of $\lambda_1$ implying 
$\lambda_1=\tilde\lambda$.

{\bf Step 2.} From Step 1 it follows that $\sE\subset (-\infty, \lambda_1]$. Pick $\lambda<\lambda_1$ and
consider the operator $\sL_\lambda=\sL+c+\lambda$. Then for every $n$, $\lambda_{\sB_n}(\sL_\lambda)=\lambda_n-\lambda>0$. We choose a sequence $\{f_n\}_{n\geq 1}$ of continuous, non positive, non-zero functions
satisfying 
\begin{align*}
\supp(f_n)\subset \sB_n\setminus \overline{\sB}_{n-1}\quad \text{ for all }\; n\in\mathbb{N}.
\end{align*}
Using the argument of \cite[Lemma~3.1]{BV20} we can then find a positive $u_n\in\cC(\bar\sB_n)$ satisfying
\begin{equation}\label{ET1.1B}
\sL u_n + c(x)u_n^{3-\gamma}(x)=-\lambda u_n^{3-\gamma}+ f_n(x)\quad \text{in}\; \sB_n,
\quad \text{and}\quad u_n=0\; \text{on}\; \partial\sB_n.
\end{equation}
Again, normalize $u_n$ to $u_n(0)=1$ and apply Harnack inequality to \eqref{ET1.1B}. Then repeat the argument
of Step 1 to find a positive solution $u$ to 
$$\sL u_n + c(x)u^{3-\gamma}(x)=-\lambda u^{3-\gamma}\quad \text{in}\; \RN.$$
Thus $\lambda\in\sE$, completing the proof.
\end{proof}

Next we prove Theorem~\ref{T1.4}.
\begin{proof}[Proof of Theorem~\ref{T1.4}(i)]
For the necessity part we see that if $\plam< 0$, there exists $\lambda<0$ and $\psi\in\cC^+_0(\RN)$
satisfying
$$\sL\psi + c(x)\psi^{3-\gamma}\geq -\lambda\psi^{3-\gamma}\geq 0\quad \text{in}\; \RN.$$
This clearly violates the MP. Hence validity of MP implies $\plam\geq 0$.

Now we prove the sufficiency part. Suppose that $\pplam>0$. Then we can find $\lambda>0$ and $V\in\cC(\RN)$
with $\inf_{\RN}V>0$ satisfying
\begin{equation}\label{ET1.3A}
\sL V + c(x) V^{3-\gamma} \leq -\lambda V^{3-\gamma}\quad \text{in}\; \RN.
\end{equation}
Consider $u\in\cC(\RN)$ satisfying
$$\sL u + c(x) (u_+)^{3-\gamma}\geq 0\quad \text{in}\; \RN, \quad \text{and}\; \limsup_{|x|\to\infty}u\leq0.$$
We show that $u\leq 0$ in $\RN$. On the contrary, we assume that $u$ is positive somewhere in $\RN$.
Let $\kappa=\sup_{\RN} \frac{u}{V}>0$. Since $\limsup_{|x|\to\infty}\frac{u}{V}\leq 0$, there exists a 
point $x_0\in\RN$ such that $u(x_0)=\kappa V(x_0)$. Replacing $V$ be $\kappa V$ we may assume that $\kappa=1$.

Now consider the coupling function
$$w_\varepsilon(x, y)= u(x)-V(y) -\frac{1}{4\varepsilon}|x-y|^4\quad x, y\in\RN.$$
Let $M_\varepsilon=\sup_{\RN\times\RN} w_\varepsilon\geq w_\varepsilon(x_0,x_0)=0$.
Since $u_+$ vanishes at infinity and
$V$ is bounded from below, there exists a compact set 
$K$ such that $w_\varepsilon(x, y)<0$ for $(x, y)\in (K\times K)^c$ and $\varepsilon\in (0,1)$. Thus we have 
$(x_\varepsilon, y_\varepsilon)\in K\times K$ satisfying 
$w_\varepsilon(x_\varepsilon, y_\varepsilon)=M_\varepsilon\geq 0$. By \cite[Lemma~2.1]{BV20}, both
$u, V$ are locally Lipschitz. Then from the arguments of Theorem~\ref{T2.1} we obtain
$$\lim_{\varepsilon\to 0} M_\varepsilon=0, \quad |x_\varepsilon-y_\varepsilon|^3\leq C_K\varepsilon,$$
and $x_\varepsilon, y_\varepsilon\to z\in K$, up to a subsequence, as $\varepsilon\to 0$. Furthermore,
$u(z)=V(z)>0$.
Repeating a calculation similar to \eqref{ET2.1H} we then have
$$-c(x_\varepsilon) u^{3-\gamma}(x_\varepsilon)-H(x_\varepsilon, \eta_\varepsilon)\leq -\lambda V^{3-\gamma}(y_\varepsilon)  -c(y_\varepsilon)V^{3-\gamma}(y_\varepsilon) - H(y_\varepsilon, \eta_\varepsilon),$$
where $\eta_\varepsilon=\frac{1}{\varepsilon}\abs{x_\varepsilon-y_\varepsilon}^2(x_\varepsilon-y_\varepsilon)$. Using \eqref{EH} this gives
us
$$0\leq -\lambda V^{3-\gamma}(y_\varepsilon) + c(x_\varepsilon) u^{3-\gamma}(x_\varepsilon)-c(y_\varepsilon)V^{3-\gamma}(y_\varepsilon)
+\omega_K(|x_\varepsilon-y_\varepsilon|) (1+|\eta_\varepsilon|^{3-\gamma}).$$
Letting $\varepsilon\to 0$
it follows that $0\leq -\lambda V^{3-\gamma}(z)$ contradicting the fact $\lambda>0$.
Therefore we must have $u\leq 0$ in $\RN$, completing the proof.
\end{proof}

Using Theorem~\ref{T1.4}(i) we can now prove Theorem~\ref{T1.2}. Before that let us define another 
intermediate quantity. By $\tilde\cC^+_0(\RN)$ let us denote the class of all non-negative, non-zero
continuous function vanishing at infinity. Define
$$\tilde\lambda^\prime_1(\sL+c) =\inf\{\lambda\in\R\;:\;  \, \psi\in\tilde\cC^+_0(\RN)
\; \text{satisfying}\; \sL\psi + (c(x)+\lambda)\psi^{3-\gamma}\geq 0 \; \text{in}\; \RN\}\,.$$
It is obvious that $\tilde\lambda^\prime_1(\sL+c)\leq \plam(\sL+c)$. We have the following
result.
\begin{theorem}\label{T3.1}
It holds that $\pplam\leq \tilde\lambda^\prime_1\leq\lambda_1$.
\end{theorem}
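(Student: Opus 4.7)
The plan is to prove the two inequalities separately. For $\tilde\lambda^\prime_1 \leq \lambda_1$, I would use the Dirichlet eigenfunctions $\varphi_n$ on $\sB_n$ constructed in the proof of Theorem~\ref{T1.1}, which solve $\sL\varphi_n + (c+\lambda_n)\varphi_n^{3-\gamma} = 0$ in $\sB_n$ with $\varphi_n = 0$ on $\partial\sB_n$ and satisfy $\lambda_n \to \lambda_1$. Extending each $\varphi_n$ by zero outside $\sB_n$ produces $\tilde\varphi_n \in \tilde\cC^+_0(\RN)$ since it is continuous, non-negative and of compact support. The crucial verification is that $\tilde\varphi_n$ remains a global viscosity subsolution of $\sL u + (c+\lambda_n) u^{3-\gamma} \geq 0$: inside $\sB_n$ the equation holds by construction, while at a point $x_0 \in \RN \setminus \sB_n$ any $\cC^2$ function $\varphi$ with $\tilde\varphi_n \prec_{x_0} \varphi$ must satisfy $\varphi(x_0)=0$ and have a local minimum at $x_0$, so $\grad\varphi(x_0) = 0$ and $D^2\varphi(x_0) \geq 0$, which forces $\ginfdel\varphi(x_0) \geq 0$ in the viscosity sense (directly via condition (iii) when $\gamma = 2$, and via the standard semicontinuous envelope convention when $\gamma < 2$), while $H(x_0, 0) = 0$ and the zeroth-order term vanishes. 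This yields $\tilde\lambda^\prime_1 \leq \lambda_n$ for every $n$, and letting $n \to \infty$ gives $\tilde\lambda^\prime_1 \leq \lambda_1$.

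For $\pplam \leq \tilde\lambda^\prime_1$ I would argue by contradiction, mimicking the maximum principle mechanism of Theorem~\ref{T1.4}(i). Suppose $\pplam > \tilde\lambda^\prime_1$ and pick real numbers $\tilde\lambda^\prime_1 < \lambda^{\prime\prime} < \lambda^\prime < \pplam$. By the definitions, there exist $V \in \cC(\RN)$ with $\inf_{\RN} V > 0$ and $\sL V + (c+\lambda^\prime) V^{3-\gamma} \leq 0$, and a nontrivial $\psi \in \tilde\cC^+_0(\RN)$ with $\sL\psi + (c+\lambda^{\prime\prime}) \psi^{3-\gamma} \geq 0$. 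Consider the shifted zeroth-order coefficient $\tilde c \df c + \lambda^\prime$, so that $\pplam(\sL + \tilde c) = \pplam - \lambda^\prime > 0$. By Theorem~\ref{T1.4}(i), the operator $\sL + \tilde c$ therefore satisfies the maximum principle MP. Since $\lambda^\prime > \lambda^{\prime\prime}$ and $\psi \geq 0$, the pointwise bound $(c+\lambda^\prime)\psi^{3-\gamma} \geq (c+\lambda^{\prime\prime})\psi^{3-\gamma}$ lets one upgrade $\psi$ to a viscosity subsolution of $\sL u + \tilde c(x)(u_+)^{3-\gamma} \geq 0$. Combined with $\psi \to 0$ at infinity, MP forces $\psi \leq 0$, contradicting that $\psi$ is nontrivial and non-negative.

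The main obstacle I foresee is the viscosity check in the first part: for $\gamma < 2$ the singular operator $\ginfdel$ is not classically defined at critical points of test functions, so one must justify, via the semicontinuous envelope convention (equivalently, by passing to the closure $\bar J^{2,+}$ of the superjet), that the zero extension $\tilde\varphi_n$ really is a subsolution across $\partial\sB_n$ and on $\RN\setminus\bar\sB_n$. Once this technical point is in place, Theorem~\ref{T1.1} delivers $\lambda_n \to \lambda_1$, and the MP-based contradiction in the second part follows cleanly from Theorem~\ref{T1.4}(i) without any further machinery.
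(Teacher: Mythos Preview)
Your proposal is correct and follows essentially the same approach as the paper: the zero extension of the Dirichlet eigenfunctions $\varphi_n$ is used for $\tilde\lambda^\prime_1\leq\lambda_1$, and the contradiction via the maximum principle of Theorem~\ref{T1.4}(i) is used for $\pplam\leq\tilde\lambda^\prime_1$. The paper's argument is slightly leaner in the second part, picking a single $\lambda\in(\tilde\lambda^\prime_1,\pplam)$ and applying Theorem~\ref{T1.4}(i) directly to the shifted operator rather than also introducing the auxiliary function $V$, but this is a cosmetic difference; your concern about the viscosity check at critical points for $\gamma<2$ is handled in the paper by the observation that $\grad\varphi(x_0)=0$ and $D^2\varphi(x_0)\geq 0$ force the required inequality (for $\gamma<2$ the operator extends continuously by $0$ at critical points, and for $\gamma=2$ one uses $M(D^2\varphi(x_0))\geq 0$).
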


\begin{proof}
First we show $\tilde\lambda^\prime_1\leq\lambda_1$.
Recall from Theorem~\ref{T1.1} the
eigenpair $(\varphi_n, \lambda_n)$ satisfying
\begin{equation}\label{ET3.1A}
\sL\varphi_n + c(x)\varphi_n^{3-\gamma}(x)=-\lambda_n\varphi^{3-\gamma}_n\quad \text{in}\; \sB_n,
\; \varphi_n>0\;\; \text{in}\; \sB_n,
\quad \text{and}\quad \varphi_n=0\; \text{on}\; \partial\sB_n.
\end{equation}
Extend $\varphi_n$ in $\RN$ by $0$ i.e., let 
\[
\psi_n(x)=\left\{
\begin{array}{ll}
\varphi_n(x) & \text{for}\; x\in\sB_n\,,
\\
0 & \text{otherwise}.
\end{array}
\right.
\]
Note that $\psi_n\in\tilde\cC^+_0(\RN)$. We show that 
\begin{equation}\label{ET3.1B}
L\psi_n + c(x)\psi_n^{3-\gamma}(x)+\lambda_n\psi^{3-\gamma}_n\geq 0 \quad \text{in}\; \RN\,.
\end{equation}
Once \eqref{ET3.1B} is established, it follows from the definition of $\tilde\lambda^\prime_1$ that $\tilde\lambda^\prime_1\leq\lambda_n$
for all $n$. Therefore, the proof follows by letting $n\to\infty$ and using the fact that 
$\lim_{n\to\infty}\lambda_n=\lambda_1$. 
Thus we just have to show \eqref{ET3.1B}.
Let $\psi_n\prec_{x_0}\varphi$ for some $\varphi\in\cC^2(\RN)$ and $x_0\in\RN$. If $\varphi(x_0)=\psi_n(x_0)=0$,
then $\psi_n$ being non-negative, it follows that $\varphi$ has a local minimum at $x_0$. Thus
$D^2\varphi(x_0)\geq 0$ and $\grad\varphi(x_0)=0$. This also implies $M(D^2\varphi(x_0))\geq 0$.
Hence 
$$\sL\varphi(x_0) + c(x_0)\psi^{3-\gamma}_n(x_0)+\lambda_n\psi^{3-\gamma}_n(x_0)\geq 0.$$
On the other hand, if $\varphi(x_0)>0$ then $x_0\in \sB_n$ and the claim follows from \eqref{ET3.1A}.

Next we show $\pplam\leq\tilde\lambda^\prime_1$. Suppose, on the contrary, that 
$\pplam>\tilde\lambda^\prime_1$. 
Pick $\lambda\in (\tilde\lambda^\prime_1, \pplam)$ and denote by
$F=\sL+c+\lambda$. Then $\pplam(F)=\pplam-\lambda>0$. Since $\lambda>\tilde\lambda^\prime_1$, there exists $u\in\tilde\cC^+_0(\RN)$
satisfying
$$F(D^2 u, \grad u, u, x)\geq 0\quad \text{in}\; \RN.$$
Therefore, by Theorem~\ref{T1.4}(i), we must have $u\leq 0$ which is a contradiction. Hence we must have $\pplam\leq\tilde\lambda^\prime_1$.
\end{proof}
Now we prove Theorem~\ref{T1.2}
\begin{proof}[Proof of Theorem~\ref{T1.2}]
In view of Theorem~\ref{T3.1}, we only need to show that $\plam\leq \lambda_1$.
Considering any $\lambda>\lambda_1$ and we show that $\plam\leq\lambda$. This  implies that
$\plam\leq \lambda_1$.

Replacing $c$ be $c+\lambda$ we may assume that $\lambda=0$ and $\lambda_1<0$.
The method in \cite[Theorem~1.7(ii)]{BR15}
gives a bounded subsolution which might not vanish at infinity. So we need to modify the argument
suitably. Let $\{\xi_k\}$ be an
increasing sequence of non-negative cut-off functions converging to $1$, uniformly on compacts.
Define $c_k(x)= c_+(x)\xi_k(x)-c_-(x)$ and $\beta_k=\lambda_1(\sL+c_k)$. It is easily seen that
$\beta_k$ is a decreasing sequence, bounded below by $\lambda_1(\sL+c)$. We claim that 
\begin{equation}\label{ET1.2A}
\lim_{k\to\infty}\beta_k=\lambda_1(\sL+c).
\end{equation}
Let $\zeta_k$ be a principal eigenfunction of $\sL+c_k$ i.e.,
$$\sL\zeta_k + c_k(x)\zeta^{3-\gamma}_k+\beta_k\zeta^{3-\gamma}_k=0\quad \text{in}\; \RN.$$
Normalize $\zeta_k(0)=1$. Then using Harnack inequality (Theorem~\ref{T2.1}) and local Lipschitz
estimate (see Step 1 of Theorem~\ref{T1.1})
we can extract a subsequence of $\zeta_k$, converging to some positive $\varphi$, 
uniformly on compacts. Passing to the limit we obtain
$$\sL\varphi + c(x)\varphi^{3-\gamma}+(\lim_{k\to\infty}\beta_k)\varphi^{3-\gamma}=0\quad \text{in}\; \RN,$$
implying $\lambda_1\geq \lim_{k\to\infty}\beta_k$. This gives us \eqref{ET1.2A}.

From \eqref{ET1.2A} choose $k_0$ large enough so that $\beta_k<0$ for all $k\geq k_0$.
Since $\lim_{n\to\infty}\lambda_{\sB_n}(\sL+c_k)=\beta_k$, there exists
large $n$, say $n_0$, so that $\lambda_{\sB_{n_0}}(\sL+c_k)=\lambda_{n_0,k}<0$. Let
$\psi_{n_0}$ be such that
\begin{equation}\label{ET1.2B}
\sL\psi_{n_0} + c(x)\psi_{n_0}^{3-\gamma}(x)=-\lambda_{n_0}\psi^{3-\gamma}_{n_0}\quad \text{in}\; \sB_{n_0},
\psi_{n_0}>0\; \text{in}\; \sB_{n_0},
\quad \text{and}\quad \psi_{n_0}=0\; \text{on}\; \partial\sB_{n_0}.
\end{equation}
Denote by $\delta=\sup_{\sB_{n_0}} |c_k(x)|+1$. Normalize $\psi_{n_0}$ so that
$$
\norm{\psi_{n_0}}_{L^\infty(\sB_{n_0})}=\min\left\{1, \frac{-\lambda_{n_0}}{\delta}\right\}\,.
$$ 
Then it is easy to check from \eqref{ET1.2B} that $\psi_{n_0}$ is a subsolution to
\begin{equation}\label{ET1.2C}
\sL u + c_ku^{3-\gamma} = ((c_k(x))_+ +1)u^{4-\gamma}\quad \text{in}\; \sB_{n_0}\,.
\end{equation}
Also, $1$ is a super-solution in $\RN$. This subsolution and supersolution is enough to
construct a solution $u$, $0<u\leq 1$, to the equation
\begin{equation}\label{ET1.2CA}
\sL u + c_ku^{3-\gamma} = ((c_k(x))_+ +1)u^{4-\gamma}\quad \text{in}\; \RN\,.
\end{equation}
This can be obtained by constructing a positive solution $u_m$ in
each ball $\sB_m\supset \sB_{n_0}$, and then using an interior
Lipschitz estimate, we can show that
$u_m\to u$ along some subsequence, as $m\to\infty$, where
$u$ solves \eqref{ET1.2CA} (cf. the arguments in \cite[Lemma~4.1]{BV20}). 
Thus we have a positive bounded solution to \eqref{ET1.2CA}. Next we show that $\lim_{|x|\to\infty} u=0$.
Suppose, on the contrary, that $\limsup_{|x|\to\infty} u=2\kappa\in (0,1]$. By our construction, we observe that
$c_k(x)\leq 0$ outside a big ball $\sB$. Thus it follows from \eqref{ET1.2CA} that
\begin{equation}\label{ET1.2D}
\sL u \geq u^{4-\gamma} \quad \text{in}\; \sB^c.
\end{equation}
Let $r$ be a large number, so that $\zeta(x)= r^{-2}|x-x_0|^2-1$ satisfies
$$\sL\zeta \leq 8r^{(-6+2\gamma)}|x-x_0|^{2-\gamma} + 
C 2^{3-\gamma} r^{-2(3-\gamma)}|x-x_0|^{3-\gamma}<(\kappa/2)^{4-\gamma},$$
in $\sB_r(x_0)$, where we use that fact that $|H(x, q)|\leq C|q|^{3-\gamma}$.
 Fix $x_0$ large enough so that $u(x_0)>3\kappa/2$ and define 
$$\eta=\inf\{t\in [1, 3]\; :\; t+\zeta> u\quad \text{in}\; \sB_r(x_0) \}.$$
We can choose the point $x_0$ far enough to ensure that $\sup_{\sB_r(x_0)}u\leq 5\kappa/2$.
It is evident that $\eta\geq 1+3\kappa/2$, since $1+3\kappa/2+\zeta(x_0)=3\kappa/2<u(x_0)$. Also, 
$$\eta+\zeta\geq 1+3\kappa/2\geq 2\kappa+3\kappa/2= 7\kappa/2>\psi\quad\text{on}\; \partial\sB_r(x_0).$$
Define $v=\eta+\zeta$ and note that $\sL v\leq (\kappa/2)^{4-\gamma}$ in $\sB_r(x_0)$. Also,
$v$ touches $u$ from above a point $\tilde{x}\in\sB_r(x_0)$ and 
$u(\tilde{x})=v(\tilde{x})\geq 1+3\kappa/2+\zeta(\tilde{x})\geq 2\kappa$. 
Now consider a coupling function (as in Theorem~\ref{T2.1})
$$w_\varepsilon(x, y)= u(x)-v(y) -\frac{1}{4\varepsilon}\abs{x-y}^4 \quad \text{for}\;
 x, y\in\bar{\sB}_r(x_0).$$
Continue as in Theorem~\ref{T2.1} (see also the proof of Theorem~\ref{T1.4}) to obtain from \eqref{ET1.2D}
that
$$(2\kappa)^{4-\gamma}\leq u^{4-\gamma}(z)\leq (\kappa/2)^{4-\gamma},$$
which is a contradiction. Thus we must have $\lim_{|x|\to\infty} u(x)=0$, implying $u\in\cC^+_0(\RN)$.
From \eqref{ET1.2C} we also have $\sL u + c_k u^{3-\gamma}\geq 0$, and thus $\plam(\sL+c_k)\leq 0$.
Therefore, we obtain 
$$\lambda^\prime_1(\sL+c)\leq \plam(\sL+c_k)\leq 0.$$
This completes the proof.
\end{proof}

Next we prove Theorem~\ref{T1.3}.
\begin{proof}[Proof of Theorem~\ref{T1.3}]
Let $ c(x)+\lambda_1>2\delta>0$ for $x\in K^c_1$, for some compact set $K_1$. Let $\psi>0$ be a principal eigenfunction corresponding to $\lambda_1$. That is,
\begin{equation}\label{EP1.1A}
\sL\psi + (c(x)+\lambda_1)\psi^{3-\gamma}=0\quad \text{in}\; \RN.
\end{equation}
We show that $\inf_{\RN}\psi>0$. This gives $\pplam\geq\lambda_1$ and the proof follows from Theorem~\ref{T1.2}.

By \cite[Proposition~4.1]{BV20} there exists $R_1, R_2>0$ such that for any $|x|\geq R_1$ we have
$$\sL\phi^x + \delta (\phi^x)^{3-\gamma}>0 \; \text{in}\; \sB_{R_2}(x), \; \phi^x>0 \; \text{in}\; \sB_{R_2}(x_0), \; \text{and}\; \phi^x=0 \; \text{on}\; \partial\sB_{R_2}(x),$$
where $\phi^x(y)=\phi(y-x)$ for some smooth function $\phi$. $\phi$ attains its maximum at $0$ and we may also
assume that $\phi(0)=1$.
Now let $K=\bar{\sB}_{2R_1+2R_2}$ and 
$\kappa=\frac{1}{2}\min_{K}\psi$. Choosing $R_1$ large enough we may assume that $K_1\subset \sB_{R_1}$.
We show that $\inf_{\RN}\psi\geq \kappa$. Let $z\in K^c$ and 
$\gamma:[0, 1]\to\RN$ be the line joining $0$ to $z$.
Define
$$t_0=\sup\{t\; :\; \kappa\phi(\cdot-\gamma(t))<\psi(\cdot)\quad \text{in}\; \sB_{R_2}(\gamma(t))\}.$$
To complete the proof we must show that $t_0=1$. On the contrary, suppose that $t_0\in (0, 1)$. It is
evident that $|\gamma(t_0)|\geq R_1+R_2$. Then $\xi\df\kappa\phi(\cdot-\gamma(t_0))$ touches $\psi$ from 
below inside $\sB_{R_2}(\gamma(t_0))$ as $\xi$ vanishes on the boundary of $\sB_{R_2}(\gamma(t_0))$.
Again, by \eqref{EP1.1A} we have
$$\sL\psi + 2\delta \psi^{3-\gamma}\leq 0\quad \text{in}\; \sB_{R_2}(\gamma(t_0)).$$
Now repeating the argument of coupling in comparison principle (see for instance, the argument in Theorem~\ref{T1.2}) we get a contradiction.
Therefore $t_0=1$ and this completes
the proof of (i).

Next we consider (ii). We show that there exists a principal eigenfunction in $\cC^+_0(\RN)$ and this will
give us the claim. Since $\lambda_n=\lambda_{\sB_n}\to\lambda_1$ (see Step 1 of Theorem~\ref{T1.1})
choose $k$ large enough so that 
$c(x)+\lambda_n<0$ for all $n> k$ and $x\in\sB_k^c$. Recall that $\varphi_n$ is an principal eigenfunction
in $\sB_n$.  Fix $\varphi_n(0)=1$ and let $m_n=\sup_{\sB_k}\varphi_n$. By Harnack inequality,
Theorem~\ref{T2.1}, it then follows that $1\leq m_n\leq \kappa$, for some $\kappa>0$, for all $n> k$.
Furthermore, $m_n$ is a strict supersolution to $\sL u + (c+\lambda_n)u^{3-\gamma}=0$ in $\sB_n\setminus\bar\sB_k$.
Thus by \cite[Theorem~2.1]{BV20} we get $\varphi_n\leq m_n\leq \kappa$ for all $n>k$. Now passing to
the limit, as done in Theorem~\ref{T1.1}, we obtain a positive $\psi$ satisfying
\begin{equation}\label{EP3.2B}
\sL\psi + (c+\lambda_1)\psi^{3-\gamma}=0\quad \text{in}\; \RN.
\end{equation}
It is evident that $\psi\leq \kappa$. Next we show that $\lim_{|x|\to\infty}\psi(x)=0$ which would conclude the proof. On the contrary, suppose that $\limsup_{|x|\to\infty}\psi>0$. Normalizing $\psi$ we may assume that
$\limsup_{|x|\to\infty}\psi=2$. Choose compact $K$ so that $\sup_{K^c}(c(x)+\lambda_1)=-2\delta$. Thus
$$\sL\psi\geq 2\delta \psi^{3-\gamma}\quad \text{in}\; K^c.$$
Let $r$ be a large number, so that $\xi= r^{-2}|x-x_0|^2-1$ satisfies
$$\sL\xi \leq 8r^{(-6+2\gamma)}|x-x_0|^{2-\gamma} + C2^{3-\gamma} r^{-2(3-\gamma)}|x-x_0|^{3-\gamma}<\delta,$$
in $\sB_r(x_0)$. Fix $x_0$ large enough so that $\psi(x_0)>3/2$ and define 
$$\beta=\inf\{t\in [1, 3]\; :\; t+\xi>\psi\quad \text{in}\; \sB_r(x_0) \}.$$
We can choose the point $x_0$ far enough to ensure that $\sup_{\sB_r(x_0)}\psi\leq 2$.
It is evident that $\beta\geq 5/2$, since $5/2+\xi(x_0)=3/2<\psi(x_0)$. Also, $\beta+\xi\geq 5/2>\psi$
in $\partial\sB_r(x_0)$. Define $v=\beta+\xi$ and note that $Lv\leq\delta$ in $\sB_r(x_0)$. Also,
$v$ touches $\psi$ from above a point $z$ and $\psi(z)=v(z)\geq 5/2+\xi(z)\geq 3/2$.
Thus applying the definition of viscosity solution we get
$$2\delta ({3}/{2})^{3-\gamma}\leq 2\delta (v(z))^{3-\gamma}=2\delta (\psi(z))^{3-\gamma}\leq \sL v(z)=\sL \xi(z)<\delta,$$
which is a contradiction.
 Hence we have $\psi\in\cC^+_0(\RN)$,
completing the proof.

Now we show that $\lambda_1=\pplam$ under the assumption that $\limsup_{|x|\to\infty}c(x)\leq-\lambda_1$.
It is enough to show that for any $\delta>0$ there exists $u\in\cC(\RN)$ with $\inf_{\RN}u>0$ and
satisfying
\begin{equation}\label{EP1.1A1}
\sL u + (c(x)+\lambda-\delta)u\leq 0\quad \text{in}\; \RN.
\end{equation}
Let $\psi$ be a principal eigenfunction of $\sL+c$ in $\RN$. Normalize $\psi$
to satisfy $\psi(0)=1$. Let $K$ be a compact set satisfying $(c(x)+\lambda_1-\delta/2)< 0$ in $K^c$. By
Harnack inequality we have a constant $\kappa>0$ so that $\sup_K \psi <\kappa$.
Define $\psi_\varepsilon(x)=\psi(x)+\varepsilon$ for $\varepsilon\in (0,1)$.
From convexity we note that for any $a, b>0$ we have
$$ a^{3-\gamma}\geq b^{3-\gamma} + (3-\gamma) b^{2-\gamma} (a-b),$$
which gives us
\begin{equation}\label{EP1.1B}
\psi_\varepsilon^{3-\gamma}(x)\leq \psi^{3-\gamma}(x) + \varepsilon \varrho ,
\end{equation}
in $K$, where $\varrho= (3-\gamma) (\kappa+1)^{2-\gamma}$.
Using \eqref{EP1.1B} we then get
\begin{align*}
(c(x)+\lambda_1-\delta/2)\psi^{3-\gamma}_\varepsilon(x) &\leq (c(x)+\lambda_1-\delta/2)\psi^{3-\gamma}(x)
+ (c(x)+\lambda_1-\delta/2)_+ \varepsilon \varrho
\\
&\leq (c(x)+\lambda_1)\psi^{3-\gamma}(x) + [\max_{K} (c(x)+\lambda_1)_+] \frac{1}{\min_K\psi^{3-\gamma}} \varepsilon \varrho \psi^{3-\gamma}_\varepsilon(x)
\\
&=(c(x)+\lambda_1)\psi^{3-\gamma}(x) + \varepsilon \varrho_1 \psi^{3-\gamma}_\varepsilon(x),
\end{align*}
for some constant $\varrho_1$. Choose $\varepsilon$ small enough to satisfy $\varepsilon \varrho_1<\delta/2$.
Putting this in \eqref{EP3.2B} we have
$$\sL\psi_\varepsilon + (c(x)+\lambda_1-\delta)\psi^{3-\gamma}_\varepsilon\leq 0\quad
\text{in}\; \RN\,,$$
which gives us \eqref{EP1.1A1}. From the arbitrariness
of $\delta$ is follows that $\pplam=\lambda_1$.

(iii) follows by combining (i) and (ii).
\end{proof}

Using Theorem~\ref{T1.3}(ii) we now prove Theorem~\ref{T1.4}(ii)
\begin{proof}[Proof of Theorem~\ref{T1.4}(ii)]
The converse direction is an application of Theorem~\ref{T1.3}(ii). If we suppose that 
$\lambda_1\leq 0<-\eta$ then by Theorem~\ref{T1.3}(ii) there exists a principal eigenfunction 
$\varphi\in\cC^+_0(\RN)$ satisfying $\sL\varphi + (c+\lambda_1)\varphi^{3-\gamma}=0$. This implies
$$\sL\varphi + c(x) \varphi^{3-\gamma}\geq 0\quad \text{in}\; \RN.$$
and therefore, MP is violated. Hence validity of MP implies $\lambda_1>0$.

Now assume that $\lambda_1>0$. We show that MP holds for $\sL+c$. Let
$u\in\cC(\RN)$ be such that
$$\sL u + c(x) (u_+)^{3-\gamma}\geq 0\quad \text{in}\; \RN, \quad \text{and}\; \limsup_{|x|\to\infty}u\leq 0.$$
We want to show that $u\leq 0$. On the contrary, suppose that $u_+\neq 0$. Let $K$ be a compact set satisfying
$c(x)<0$ for $x\in K^c$. From the definition of viscosity solution it follows that $\max u_+$ is attained
only in $K$. Let $\psi>0$ be a principal eigenfunction i.e.
\begin{equation}\label{ET1.4A}
\sL\psi + (c(x)+\lambda_1)\psi^{3-\gamma}=0\quad \text{in}\; \RN\,.
\end{equation}
For $\varepsilon>0$ we define $\Omega_\varepsilon=\{x\in\RN\; :\; u(x)>\varepsilon\}$. Choosing $\varepsilon$
small we may assume that $\Omega_\varepsilon\neq \emptyset$. Since, for $a\geq b\geq 0$ we have
$(a-b)^{3-\gamma}\geq a^{3-\gamma}-(3-\gamma) a^{2-\gamma} b$ (due to convexity), we note that
\begin{equation}\label{ET1.4B}
\sL u + c(x) (u-\varepsilon)^{3-\gamma}\geq \sL u + c(x) u^{3-\gamma} - c_+(x)(3-\gamma) u^{2-\gamma}\varepsilon
\geq -\varepsilon c_+(x)(3-\gamma) \sup_K |u|^{2-\gamma},
\end{equation}
in $\Omega_\varepsilon$. We choose $\varepsilon$ small enough so that 
$$\varepsilon c_+(x)(3-\gamma) \sup_K |u|^{2-\gamma}< \lambda_1\inf_{K}\psi^{3-\gamma}\quad \forall\; x.$$
This is possible since $c_+$ vanishes outside $K$.
We claim that $u-\varepsilon\leq \psi$ in $\Omega_\varepsilon$ for all 
$\varepsilon$ small. If not, then we have $(u-\varepsilon-\psi)_+\neq 0$ in $\Omega_\varepsilon$.
Define $v=t(u-\varepsilon)$ where
$$t=\sup\{s\; :\; s (u-\varepsilon)<\psi\; \text{in}\; \Omega_\varepsilon\}.$$  
Since $\Omega_\varepsilon$ is bounded, we have $t>0$ and by assumption $t<1$. Of course, $v$ touches 
$\psi$ from below in $\Omega_\varepsilon$ and we may consider the connected component where this contact happens.
Also, we note from \eqref{ET1.4B} that 
$$\sL v + c(x) v^{3-\gamma}\geq - \varepsilon t^{3-\gamma} c_+(x)(3-\gamma) \sup_K |u|^{2-\gamma}
\geq -\varepsilon c_+(x)(3-\gamma) \sup_K |u|^{2-\gamma}\quad \text{in}\; \Omega_\varepsilon.$$
Then applying comparison principle between $v$ and $\psi$ (as done in Theorem~\ref{T2.2})
we would find a point $z\in\Omega_\varepsilon$
with $v(z)=\psi(z)$ and 
$$-\varepsilon c_+(z)(3-\gamma) \sup_K |u|^{2-\gamma} - c(z) v^{3-\gamma}(z)
\leq -(c(z)+\lambda_1)\psi^{3-\gamma}(z).$$
This is clearly not possible by our choice of $\varepsilon$. Thus $u-\varepsilon\leq \psi$ in $\Omega_\varepsilon$ for all  $\varepsilon$ small. Letting $\varepsilon\to 0$ it implies 
$u\leq\psi$ in $\RN$. This is also true for any $\kappa\psi$ for $\kappa>0$, and it can hold only if
$u\leq 0$ in $\RN$. Hence the result.
\end{proof}

\begin{remark}
From the proof of Theorem~\ref{T1.4}(ii) we make an observation that $\pplam>0$ is not a necessary condition 
for $\sL+c$ to satisfy MP. Consider the Example~\ref{Eg1.2} i.e., $\sL u = (u^\prime)^2 u^{\prime\prime}+
(u^\prime)^3$ and $c=0$. As shown in Example~\ref{Eg1.2}, we have $\lambda_1>0$ and $\plam=\pplam=0$.
Let us now show that $\sL$ satisfies MP. Suppose $\sL u\geq 0$ in $\RN$ and $\limsup_{|x|\to\infty}u\leq 0$.
Since $u-\varepsilon$ is also a subsolution, it is easily seen that $\sL (u-\varepsilon)_+\geq 0$. Then
the proof of Theorem~\ref{T1.4} gives us $(u-\varepsilon)_+\leq \kappa\psi$ for every $\kappa, \varepsilon>0$ where
$\psi$ is a principal eigenfunction. This is possible only if $u\leq 0$ in $\RN$. Thus $\sL$ satisfies MP.
\end{remark}

\subsection{Qualitative properties of the eigenvalues and eigenfunctions}
In this section we discuss some qualitative properties of the eigenvalues and eigenfunctions. More precisely, we prove Theorem~\ref{T1.5}, Theorem~\ref{T1.6} and Theorem~\ref{T1.7}.

In the spirit of \cite[Proposition~5.1]{BNV}, we find the continuity property of the principal eigenvalues with respect to the coefficients. 
\begin{theorem}\label{T1.5}
$\lambda_1$ ($\plam,\, \pplam$) are Lipschitz continuous with respect to the potential $c$.
$\lambda_1$ (and $\pplam$)
is locally Lipschitz in $H$ the following sense: let $H_1, H_2$ be two Hamiltonians satisfying
$$|H_1(x, q)-H_1(x, q)|\leq \delta |q|^{3-\gamma}\quad \text{for all}\; x, q,\quad \text{and}\; \delta<1,$$
and if $\lambda_1(\sL_i)$ is the generalized principal  eigenvalue of $\sL_i =\ginfdel + H_i + c$, $i=1,2$, then we have
$$|\lambda_1(\sL_1)-\lambda_1(\sL_2)|\leq (3c_0 + d + 3|\lambda_1(\sL_1)|+3|\lambda_1(\sL_2)|)\delta,$$
where $\sup_{\RN}c= c_0$ and $d=\frac{1}{4-\gamma}\left[\frac{4-\gamma}{3-\gamma}\right]^{-(3-\gamma)}$.  
\end{theorem}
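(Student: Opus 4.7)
\textbf{Proof plan for Theorem~\ref{T1.5}.}

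The Lipschitz dependence on $c$ is immediate from the definitions. If $\psi$ is admissible for $\lambda_1(\RN,\sL+c_1)$ at level $\lambda$, then rewriting $c_1 = c_2 + (c_1-c_2)$ shows that $\lambda - \|c_1-c_2\|_\infty$ is admissible for $\sL+c_2$; taking suprema and swapping the roles of $c_1,c_2$ yields the Lipschitz bound with constant $1$. The same argument, dualised in the sign of the inequality for $\plam$ and preserved for $\pplam$, gives the assertion for the other two eigenvalues.

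For the $H$-dependence, I focus on $\lambda_1$; the argument transfers to $\pplam$ verbatim since the substitution below preserves the property $\inf\psi>0$. Fix $\eps>0$ and pick a positive $\psi \in \cC(\RN)$ admissible for $\sL_1+c$ at some $\lambda > \lambda_1(\sL_1) - \eps$. The key step is the power substitution $\tilde\psi \df \psi^\alpha$ with $\alpha = 1 - \delta \in (0,1)$. Using the $(3-\gamma)$-homogeneity of $\ginfdel$ and $H_2$ in the gradient variable (and working at the viscosity test-function level via the chain rule), a direct computation yields
\begin{align*}
\sL_2\tilde\psi + (c+\tilde\lambda)\tilde\psi^{3-\gamma} &= \alpha^{3-\gamma}\psi^{(3-\gamma)(\alpha-1)}\bigl[\sL_2\psi + (\alpha-1)\psi^{-1}|\grad\psi|^{4-\gamma} \\
&\qquad {}+ \alpha^{-(3-\gamma)}(c+\tilde\lambda)\psi^{3-\gamma}\bigr].
\end{align*}
Inserting $\sL_2\psi \leq \sL_1\psi + \delta|\grad\psi|^{3-\gamma} \leq -(c+\lambda)\psi^{3-\gamma} + \delta|\grad\psi|^{3-\gamma}$, the bracket is dominated by
$$
\delta|\grad\psi|^{3-\gamma} - (1-\alpha)\psi^{-1}|\grad\psi|^{4-\gamma} + [\alpha^{-(3-\gamma)}(c+\tilde\lambda) - (c+\lambda)]\psi^{3-\gamma}.
$$
Young's inequality with conjugate exponents $\tfrac{4-\gamma}{3-\gamma}$ and $4-\gamma$, with the free parameter tuned so that the coefficient of $\psi^{-1}|\grad\psi|^{4-\gamma}$ exactly equals $1-\alpha$, gives
$$
\delta|\grad\psi|^{3-\gamma} \leq (1-\alpha)\psi^{-1}|\grad\psi|^{4-\gamma} + d''\delta^{4-\gamma}\psi^{3-\gamma}, \quad d'' \df \tfrac{1}{4-\gamma}\Bigl[\tfrac{3-\gamma}{(1-\alpha)(4-\gamma)}\Bigr]^{3-\gamma}.
$$
The gradient terms cancel exactly, and requiring the remaining $\psi^{3-\gamma}$-multiple to be pointwise nonpositive (using $c\leq c_0$ and $\alpha^{3-\gamma}\leq 1$) forces the choice
$$
\tilde\lambda = \alpha^{3-\gamma}(\lambda - d''\delta^{4-\gamma}) - (1-\alpha^{3-\gamma})c_0,
$$
which witnesses $\lambda_1(\sL_2) \geq \tilde\lambda$.

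It remains to plug $\alpha = 1-\delta$ into the bound. Bernoulli's inequality (valid since $3-\gamma \geq 1$) gives $1-\alpha^{3-\gamma} \leq (3-\gamma)\delta \leq 3\delta$, while $1-\alpha=\delta$ collapses $d''$ to $d\,\delta^{-(3-\gamma)}$ with $d = \tfrac{1}{4-\gamma}[(4-\gamma)/(3-\gamma)]^{-(3-\gamma)}$, so that $\alpha^{3-\gamma}d''\delta^{4-\gamma}\leq d\delta$. Letting $\eps\to 0$ and using $\lambda_1(\sL_1)+c_0\geq 0$ (which holds since $\psi\equiv 1$ is admissible at $\lambda = -c_0$), we obtain
$$\lambda_1(\sL_1) - \lambda_1(\sL_2) \leq 3\delta(|\lambda_1(\sL_1)| + c_0) + d\delta.$$
Swapping the two operators and adding the resulting bounds gives the asserted estimate. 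The main obstacle is the balancing inherent in the Young step: smaller $1-\alpha$ blows up $d''$ via a negative power of $(1-\alpha)$, while larger $1-\alpha$ enlarges the correction $(1-\alpha^{3-\gamma})c_0$; the choice $\alpha = 1-\delta$ is precisely the scaling that produces $\delta^{-(3-\gamma)}\cdot\delta^{4-\gamma} = \delta$ in the Young remainder, recovering the explicit constant $d$ from the statement rather than a generic one.
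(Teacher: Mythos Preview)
Your proof is correct and follows essentially the same approach as the paper: the power substitution $\tilde\psi=\psi^\alpha$, the absorption of $\delta|\nabla\psi|^{3-\gamma}$ into the extra $(1-\alpha)$-gradient term via Young's inequality with exponents $\tfrac{4-\gamma}{3-\gamma}$ and $4-\gamma$, and the choice $\alpha=1-\delta$ together with the convexity bound $1-\alpha^{3-\gamma}\le(3-\gamma)(1-\alpha)$ are exactly the paper's computation, merely written with the common factor $\alpha^{3-\gamma}\psi^{(3-\gamma)(\alpha-1)}$ pulled out rather than expressed in the variable $v=\psi^\alpha$. One cosmetic point: in the last line you should say ``taking the larger of the two one-sided bounds'' rather than ``adding'' them, but since the stated constant already dominates the maximum this does not affect the conclusion.
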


\begin{proof}
Proof of the first part follows from the definitions. We consider the second part and 
only do it for $\lambda_1$.
Let $(\lambda, u)$, $u>0$, be such that
$$\ginfdel u + H_1(x,\grad u) + c(x) u^{3-\gamma} +\lambda u^{3-\gamma}\leq 0\quad \text{in}\; \RN.$$
Denote by $v=u^\alpha$ for some $\alpha\in (0, 1)$, to be chosen later. It is routine to show that
$$\ginfdel v +\frac{1-\alpha}{\alpha}\frac{1}{v}|\grad v|^{4-\gamma} +H_1(x,\grad v) + \alpha^{3-\gamma}c(x) v^{3-\gamma} +\alpha^{3-\gamma}\lambda v^{3-\gamma}\leq 0\quad \text{in}\; \RN,$$
in viscosity sense. Then
\begin{align*}
\sL_2 v + \lambda v^{3-\gamma}
&\leq (c_0+\lambda) (1-\alpha^{3-\gamma})v^{3-\gamma} + \delta |q|^{3-\gamma}-\frac{1-\alpha}{\alpha}\frac{1}{v}|\grad v|^{4-\gamma} 
\\
&\leq (c_0+\lambda) (1-\alpha^{3-\gamma})v^{3-\gamma} + d \delta^{4-\gamma} (\frac{\alpha}{1-\alpha})^{3-\gamma}
v^{3-\gamma},\quad [\text{Young's inequality}].
\end{align*}
By convexity, we have $(1-\alpha^{3-\gamma})\leq (3-\gamma)(1-\alpha)$ and choose 
$\alpha=1-\delta$ to obtain
$$\sL_2 v + \lambda v^{3-\gamma}\leq [3(c_0+|\lambda|)+d]\delta v^{3-\gamma}\quad \text{in}\; \RN.$$
Choosing a sequence of $\lambda$ converging to $\lambda_1(\sL_1)$ we obtain
$$\lambda_1(\sL_2)\geq \lambda_1(\sL_1)- [3(c_0+|\lambda_1(\sL_1)|)+d]\delta.$$
Hence the result follows.
\end{proof}

Now we come to Theorem~\ref{T1.6}
\begin{proof}[Theorem~\ref{T1.6}]
Since (ii) follows from (i) by replacing $c$ by $-c$, we only prove (i). From the definition it is
evident that $\lambda_{1,\alpha}\geq -\alpha \sup_{\RN}c$. Now pick a ball $\sB$ satisfying 
$c(x)>\sup_{\RN}c-\varepsilon$ for all $x\in\sB$. Let $(\lambda_\sB, \varphi_\sB)$ be a principal
eigenpair of $\sL$ in $\sB$ i.e.,
$$\sL\varphi_\sB+\lambda_\sB\varphi^{3-\gamma}_\sB=0\quad \text{in}\; \sB.$$
Then it follows that, in $\sB$,
\begin{align*}
\sL\varphi_\sB + \alpha c(x) \varphi^{3-\gamma}_\sB &= (-\lambda_\sB+\alpha\varepsilon)\varphi^{3-\gamma}_\sB
+ \alpha(\sup c-2\varepsilon)\varphi^{3-\gamma}_\sB
\\
&\geq \alpha(\sup c-2\varepsilon)\varphi^{3-\gamma}_\sB,
\end{align*}
for all $\alpha$ large. Using \eqref{E1.2} and monotonicity property of eigenvalues we get
$$\lambda_1(\sL+\alpha c)\leq \lambda_{\sB}(\sL+\alpha c)\leq -\sup_{\RN} c + 2\varepsilon,$$
for all $\alpha$ large, implying $\limsup_{\alpha\to\infty}\frac{\lambda_{1,\alpha}}{\alpha}\leq -\sup_{\RN} c
+ 2\varepsilon$. Since $\varepsilon$ is arbitrary this completes the proof.

(iii)\, The idea of the proof is to construct a supersoltuion for every $\alpha$ large. Fix $\varepsilon>0$
and let $R$ be such that $c(x)-\eta-\varepsilon\leq 0$ for all $x\in \sB_R^c$ where $\eta=\limsup_{|x|\to\infty}c(x)$. Let $\chi:[0, \infty)\to [0, \infty)$ be a $\cC^2$, strictly increasing,
convex function such that $\chi(0)=\chi^\prime(0)=0$ and $\chi(r)=r$ for $r\geq 1$. Let $\varphi(x)=\chi(|x-x_0|)$ where $|x_0|=R+1$. Then $\varphi$ is also a convex function that is radially decreasing about $x_0$.
Let $\upsilon(x)=(e^{\beta\varphi(x)}+ e^{-\beta\varphi(x)})^{-\delta}$ where $\delta=\delta(\alpha)$ and
$\beta=\beta(\alpha)$ satisfy the following
\begin{equation}\label{EP1.2A}
\delta=\alpha^{-\frac{1}{4-\gamma}}, \quad \beta=\alpha^{\frac{-1}{2(4-\gamma)(6-\gamma)}}\,.
\end{equation}
By the above choice we have $\alpha\delta^{4-\gamma}\beta^{4-\gamma}\to 0$ and 
$\alpha\delta^{3-\gamma}\beta^{6-\gamma}\to\infty$ as $\alpha\to\infty$. This fact will be useful in the
calculation below.
Denote by 
$$h(x)=\frac{e^{\beta\varphi(x)}-e^{-\beta\varphi(x)}}{e^{\beta\varphi(x)}+e^{-\beta\varphi(x)}}.$$
Now a straightforward calculation gives us
\begin{align*}
\tilde\sL_{\alpha}\upsilon - \eta\upsilon^{3-\gamma} &\leq \upsilon^{3-\gamma}\Bigl[
\underbrace{\alpha \delta^{3-\gamma}(\delta+1)\beta^{4-\gamma} h^{4-\gamma} |\grad\varphi|^{4-\gamma}}_{A(\alpha)}-
\underbrace{\alpha \delta^{3-\gamma}\beta^{4-\gamma} h^{2-\gamma} |\grad\varphi|^{4-\gamma}}_{B(\alpha)}
\\
&\; -\underbrace{\alpha\delta^{3-\alpha}\beta^{3-\gamma}h^{3-\gamma}\ginfdel\varphi}_{C(\alpha)}
+ \underbrace{C \delta^{3-\gamma}\beta^{3-\gamma}h^{3-\gamma}}_{D(\alpha)} + c(x)-\eta
\Bigr]
\end{align*}
Since $\varphi$ is convex we have $D(\alpha)\geq 0$. Note that $\sB_R\subset \sB_{2R+1}(x_0)\setminus\sB_1(x_0)$ and hence, for all $x\in \sB_R$ we have
$$h(x)=\frac{e^{2\beta\varphi(x)}-1}{e^{2\beta\varphi(x)}+1}\geq
\frac{2\beta\chi(|x-x_0|)}{e^{2\beta\chi(2R+1)}+1}\geq \frac{2\beta}{e^{2\beta\chi(2R+1)}+1}.$$
Furthermore, for large $\alpha$ (i.e. small $\beta$) we have 
$$\sup_{\sB_R}[(1+\delta)h^2(x)-1]<-\frac{1}{2}.$$
Combining we get that for $x\in\sB_R$,  $|\grad\varphi|=1$ and
$$A(\alpha)-B(\alpha)+D(\alpha)+c(x)\leq \kappa_R \alpha\delta^{3-\gamma}\beta^{6-\gamma}(-\frac{1}{2})
+ C\, (\delta\beta)^{3-\gamma} + c(x)+\eta\to -\infty, $$
as $\alpha\to\infty$, using \eqref{EP1.2A}.
 So we consider $x\in\sB_R^c$. Recall $c(x)-\eta\leq \varepsilon$ in $\sB_R^c$.
Since $ 0\leq h(x)\leq 1$ we have
$$A(\alpha)-B(\alpha)\leq \alpha \delta^{4-\gamma}\beta^{4-\gamma} h^{2-\gamma} |\grad\varphi|^{4-\gamma}
\to 0,$$
as $\alpha\to\infty$, uniformly in $x$. Same limit holds for $D(\alpha)$. Thus we have
$$A(\alpha)-B(\alpha)-C(\alpha) + D(\alpha)+c(x)-\eta<2\varepsilon,$$
in $\sB_R^c$, as $\alpha\to\infty$. Combining we thus have
$$\tilde\sL_{\alpha}\upsilon + (-\eta-2\varepsilon)\upsilon^{3-\gamma}\leq 0\quad \text{in}\; \RN.$$
This implies that $\liminf_{\alpha\to\infty}\tilde\lambda_{1,\alpha}\geq \eta-2\varepsilon$. The
result follows from the arbitrariness of $\varepsilon$.

(iv)\, Denote by $\hat\eta=\liminf_{|x|\to\infty}c(x)$. From \cite[Proposition~4.1]{BV20}, for
every $\varepsilon>0$, there exists $R_1, R_2$, dependent on $\alpha$, satisfying for 
any $|x|\geq R_1$ that
$$\ginfdel\psi^x + \frac{1}{\alpha}H(y, \grad\psi^x)\phi^x + \frac{\varepsilon}{\alpha} (\phi^x)^{3-\gamma}>0 \; \text{in}\; \sB_{R_2}(x), \; \phi^x>0 \; \text{in}\; \sB_{R_2}(x_0), \; \text{and}\; \phi^x=0 \; \text{on}\; \partial\sB_{R_2}(x),$$
where $\phi^x(y)=\phi(y-x)$ for some smooth function $\phi$. Now choose $|z|$ large enough so that
$c(x)-\eta+2\varepsilon>\varepsilon$ in $\sB_{R_2}(z)$. Then we get from above that
$$\ginfdel\phi^z + \frac{1}{\alpha}H(y, \grad\phi^z)\phi^z + \frac{c(x)-\eta+2\varepsilon}{\alpha} 
(\phi^z)^{3-\gamma}>0
\quad \text{in}\; \sB_{R_2}(z).$$
Using \eqref{E1.2} we then have
$$\tilde\lambda_{1,\alpha}\leq \lambda_{\sB_{R_2}(z)}(\tilde\sL_{\alpha})\leq -\eta+2\varepsilon.$$
The result follows by letting $\alpha$ to infinity and using arbitrariness of $\varepsilon$.
\end{proof}

Next, by using the Harnack inequality, we can also obtain a lower bound on the decay of the eigenfunction.
\begin{theorem}\label{T1.7}
Suppose that $\varphi$ is a positive solution to $\sL\varphi + c(x)\varphi^{3-\gamma}=0$ in $\RN$ with $|H(x, q)|\leq C q^{3-\gamma}$
and $\sup|c|\leq \mu$. Then for any $\alpha\in (0, 1)$, we have
$$\varphi(x)\geq C_1 \min_{\sB_1}\varphi\, \exp\left(-4|x|\kappa_\alpha
\Bigl[C + (\nicefrac{\mu}{\alpha^{3-\gamma}})^{\nicefrac{1}{4-\gamma}}\Bigr]\right)\quad \text{for all $|x|$ large},$$
for some constant $C_1$, where
$$\kappa_\alpha=\frac{\log (1-2^{-\alpha})}{\alpha-1}.$$
\end{theorem}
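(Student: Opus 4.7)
The plan is to extract a local Harnack-type lower bound from the construction in the proof of Theorem~\ref{T2.1}, and then to chain this estimate along a segment from $\bar\sB_1$ to the point $x$.

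\textbf{Step 1: Reducing to the hypothesis of Theorem~\ref{T2.1}.} Since $\sL\varphi + c(x)\varphi^{3-\gamma}=0$ with $|H(x,q)|\leq C|q|^{3-\gamma}$ and $|c|\leq\mu$, the equation rearranges as
\begin{equation*}
\ginfdel \varphi = -H(x,\grad\varphi) - c(x)\varphi^{3-\gamma} \leq C|\grad\varphi|^{3-\gamma} + \mu\varphi^{3-\gamma}\quad\text{in}\;\RN,
\end{equation*}
in the viscosity sense. Thus \eqref{ET2.1A} is satisfied with $\theta=C$. In particular the critical radius from the proof of Theorem~\ref{T2.1} is
\begin{equation*}
r_\circ \;=\; \frac{1-\alpha}{2\bigl[C+(\mu/\alpha^{3-\gamma})^{1/(4-\gamma)}\bigr]}.
\end{equation*}

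\textbf{Step 2: Local estimate with a moving center.} The proof of Theorem~\ref{T2.1} actually gives more than a ball-to-ball Harnack: for any $r\in(0,r_\circ\wedge 1)$ and any $x_0\in\RN$, the barrier $w(z)=\varphi(x_0)\bigl(1-r^{-\alpha}|z-x_0|^\alpha\bigr)$ satisfies the strict reverse inequality \eqref{ET2.1D} on $\sB_r(x_0)\setminus\{x_0\}$, agrees with $\varphi$ at $x_0$, and vanishes on $\partial\sB_r(x_0)$. The coupling-variable argument detailed in Theorem~\ref{T2.1} then yields $\varphi\geq w$ on $\sB_r(x_0)$; choosing $|y-x_0|\leq r/2$ we obtain the scale-free local estimate
\begin{equation}\label{Eplan1}
\varphi(y)\;\geq\;\bigl(1-2^{-\alpha}\bigr)\,\varphi(x_0)\qquad\text{whenever}\;|y-x_0|\leq r/2.
\end{equation}

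\textbf{Step 3: Chaining from $\bar\sB_1$ to $x$.} Fix $x$ with $|x|$ large and let $x_\ast\in\bar\sB_1$ be a point where $\varphi$ attains $\min_{\bar\sB_1}\varphi$. Along the straight segment from $x_\ast$ to $x$, place successive points $x_\ast=z_0,z_1,\ldots,z_n=x$ with $|z_{i+1}-z_i|\leq r/2$ and $n=\lceil 2|x-x_\ast|/r\rceil\leq 2|x|/r + c_r$, where $c_r$ is an absolute constant (independent of $x$). Applying \eqref{Eplan1} at each step yields
\begin{equation*}
\varphi(x)\;\geq\;\bigl(1-2^{-\alpha}\bigr)^n\varphi(x_\ast)\;\geq\;\bigl(1-2^{-\alpha}\bigr)^{c_r}\exp\!\left(\frac{2|x|}{r}\log(1-2^{-\alpha})\right)\min_{\bar\sB_1}\varphi.
\end{equation*}

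\textbf{Step 4: Optimizing $r$.} Letting $r\nearrow r_\circ$ (which we may do by applying the preceding argument for a sequence $r_k\to r_\circ$ and absorbing the bounded error into the constant) we have
\begin{equation*}
\frac{2}{r}\log(1-2^{-\alpha})\;\longrightarrow\;\frac{4\log(1-2^{-\alpha})}{1-\alpha}\bigl[C+(\mu/\alpha^{3-\gamma})^{1/(4-\gamma)}\bigr]\;=\;-4\kappa_\alpha\bigl[C+(\mu/\alpha^{3-\gamma})^{1/(4-\gamma)}\bigr],
\end{equation*}
by the definition $\kappa_\alpha=\log(1-2^{-\alpha})/(\alpha-1)$. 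Gathering the remaining multiplicative losses into $C_1$ gives the claimed bound for $|x|$ large.

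The main conceptual obstacle is Step~2, namely realizing that the barrier in the proof of Theorem~\ref{T2.1} can be translated to an arbitrary base point $x_0\in\RN$ (the coefficients of $\sL$ and the bound on $H$ are translation-uniform) and then reading off an explicit lower bound at points at distance $r/2$ from $x_0$; the remainder is a covering/chaining argument together with the bookkeeping needed to identify the constant $4\kappa_\alpha$.
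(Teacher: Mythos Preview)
Your proof is correct and follows essentially the same route as the paper: translate the barrier from Theorem~\ref{T2.1} to an arbitrary center to obtain the local lower bound $\varphi(y)\geq(1-2^{-\alpha})\varphi(x_0)$ for $|y-x_0|\leq r/2$, then chain this estimate along a straight segment of roughly $2|x|/r$ steps and read off the exponential rate. The only cosmetic differences are that the paper sets $r=r_\circ$ directly (one checks that the strict subsolution inequality still holds at $r=r_\circ$), chains inward from $x$ to a point in $\sB_r(0)$ and then invokes the interior Harnack once more to pass to $\min_{\sB_1}\varphi$, whereas you start at the minimizer in $\bar\sB_1$ and chain outward; both lead to the same bound.
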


\begin{proof}
Let $r=\frac{1-\alpha}{2}\frac{1}{C + (\nicefrac{\mu}{\alpha^{3-\gamma}})^{\nicefrac{1}{4-\gamma}}}.$
Pick any point $x\in \RN$ and consider the function $v(y)=\frac{\varphi(x)}{r^\alpha}(r^\alpha-|y-x|^\alpha)$.
Then, as shown in Theorem~\ref{T2.1}, we have
$$\sL v + c(x) v>0\quad \text{in}\; \sB_r(x)\setminus\{x\}.$$
By comparison principle (see the proof of Theorem~\ref{T2.1}) we then obtain 
$$\varphi(y) \geq v(y) \quad \text{in}\; \sB_r(x).$$
In particular,
\begin{equation}\label{ET1.6A}
\varphi(x) (1-2^{-\alpha})\leq u(y) \quad \text{for all}\; y\in\bar\sB_{r/2}(x).
\end{equation}
For any $x$ large, define $\eta=-\frac{x}{|x|}$ and $\xi_i=x-\eta {i}\frac{r}{2}$. For 
$m=\lfloor\frac{2|x|}{r}\rfloor + 1$, we have $\xi_m\in\sB_r(0)$. Successively applying \eqref{ET1.6A} we
obtain
$$\varphi(x)\geq (1-2^{-\alpha})\varphi(\xi_1)\geq\ldots\geq (1-2^{-\alpha})^{m}\min_{\sB_r(0)}\varphi.$$
We note that 
$$ (1-2^{-\alpha})^{m}=e^{m\log (1-2^{-\alpha}) }= e^{\frac{2|x|}{r}\log(1-2^{-\alpha})}
=\exp\left(-4|x|\{C + (\nicefrac{\mu}{\alpha^{3-\gamma}})^{\nicefrac{1}{4-\gamma}}\}\frac{\log (1-2^{-\alpha})}{\alpha-1}\right).$$
Again, by Harnack inequality, we also have $\min_{\sB_r(0)}\varphi\geq C_1 \min_{\sB_1(0)}\varphi$.
Combining we get the result.

\end{proof}

\subsection{Proof of Theorem~\ref{T1.8}}
The goal of this section is to prove Theorem~\ref{T1.8}. Two main ingredients for this purpose
are the comparison theorem (Theorem~\ref{T3.2}) and decay property at infinity (Lemma~\ref{L3.1}).
\begin{theorem}\label{T3.2}
Assume (F3) and the fact that $f(x, \cdot)$ is locally Lipschitz, uniformly with respect to $x$ in compacts.
Suppose that $\sL u + f(x, u)\geq 0$ in $\RN$ and 
$\sL v + f(x, v)\leq 0$ in $\RN$ where $u, v$ are positive continuous functions. In addition, also
assume that $\lim_{\abs{x}\to\infty} u(x)=0$.
Then we have $u\leq v$ in $\RN$.
\end{theorem}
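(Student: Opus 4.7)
The plan is a proof by contradiction via the KPP-style sliding/scaling method, adapted to the degenerate viscosity setting. The algebraic backbone is that $\sL$ is positively $(3-\gamma)$-homogeneous in the argument, while (F3) forces $s \mapsto f(x,s)/s^{3-\gamma}$ to be strictly decreasing. Consequently, for any $t > 1$ and any point where $v > 0$,
\begin{equation*}
\sL(tv) + f(x, tv) \,\leq\, t^{3-\gamma}\bigl[\sL v + f(x, v)\bigr] + \bigl[f(x, tv) - t^{3-\gamma} f(x, v)\bigr] < 0,
\end{equation*}
so $tv$ is a \emph{strict} viscosity supersolution wherever $v > 0$, with a uniform negative gap $-\eta < 0$ on any compact set on which $v$ admits a positive lower bound.

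Assuming for contradiction $u \not\leq v$, the first step reduces to contact at a finite point. Set $M \df \sup_\RN(u-v)$; since $u \to 0$ at infinity and $v > 0$, any maximizing sequence $|x_n| \to \infty$ would satisfy $(u-v)(x_n) \leq u(x_n) \to 0$, contradicting $M > 0$, so $M$ is attained at some $x^* \in \RN$ with $u(x^*) > v(x^*) > 0$. I would then upgrade this to an optimal scaling contact by defining
\begin{equation*}
t^* \,\df\, \sup_{\RN}\frac{u}{v}\,\in\,(1,\infty],
\end{equation*}
and showing $t^*$ is finite and attained at some $z \in \RN$ with $u(z) = t^* v(z)$.

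The second step is viscosity doubling at the contact $z$, exactly in the spirit of the proof of Theorem~\ref{T2.1}: on
\begin{equation*}
\Phi_\varepsilon(x, y) = u(x) - t^* v(y) - \frac{1}{4\varepsilon}|x-y|^4,
\end{equation*}
$\sup \Phi_\varepsilon$ is attained in a bounded region (since $u \to 0$ and $t^* v > 0$ at infinity). Crandall-Ishii produces $p_\varepsilon = \varepsilon^{-1}|x_\varepsilon - y_\varepsilon|^2(x_\varepsilon - y_\varepsilon)$ and matrices $X \leq Y$ with the requisite jet relations. Combining the subsolution inequality for $u$ with the strict supersolution inequality for $t^* v$, using the Hamiltonian modulus estimate \eqref{EH} together with $|p_\varepsilon|^{3-\gamma} \to 0$ (which kills both the Hessian and the $H$-modulus contributions in the limit, including the degenerate case $p_\varepsilon = 0$ treated via the case~(iii) of the definition when $\gamma = 2$), and letting $\varepsilon \to 0$ with $x_\varepsilon, y_\varepsilon \to z$, one obtains
\begin{equation*}
f(z, u(z)) - f(z, t^* v(z)) \,\geq\, \eta > 0.
\end{equation*}
Since $u(z) = t^* v(z)$, the left-hand side vanishes, which is the desired contradiction.

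The main obstacle is the first-step claim that $t^* < \infty$ and is attained in $\RN$, because $v$ is not a priori bounded below at infinity, so in principle $u/v$ could blow up or achieve its supremum only along a sequence $|x_n| \to \infty$ with $v(x_n) \to 0$. This is where (F3)'s assumption $\limsup_{|x|\to\infty} a(x) < 0$ becomes essential: for $v(x)$ small and $|x|$ large one has $f(x, v) \leq -\tfrac{\alpha}{2} v^{3-\gamma}$, so the supersolution inequality together with the Harnack estimate of Theorem~\ref{T2.1} allows one to place below $v$ an explicit barrier sub-solution of controlled decay (an exponentially-decaying power, constructed in the spirit of the $\theta$-barriers used in the proof of Theorem~\ref{AB}); this forces $u/v$ to stay bounded and its supremum to be realized at a point in $\RN$, completing the argument.
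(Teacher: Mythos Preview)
Your overall strategy---scale $v$ by $t^*=\sup_{\RN} u/v$ and then run Crandall--Ishii doubling at the contact---is exactly the paper's, and your Step~2 is fine once a contact point exists. The genuine gap is in Step~1: you have not shown that $t^*<\infty$ and is attained. Your proposed fix, a Harnack-based lower barrier for $v$, gives at best $v(x)\gtrsim e^{-\kappa|x|}$ (this is the content of Theorem~\ref{T1.7}, whose proof only uses the supersolution inequality), but from $u$ you only have $u(x)\to 0$ with \emph{no rate}. Hence $u/v$ may well be unbounded along sequences $|x_n|\to\infty$ with $v(x_n)\to 0$, and nothing in your sketch excludes this. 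The attempt to upgrade to an exponential upper bound on $u$ by comparison with $e^{-\beta|x|}$ on $K^c$ runs into the same circularity: on $\partial\sB_R$ both $u$ and the barrier tend to zero and you cannot order them without already knowing a decay rate for $u$.

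The paper sidesteps this entirely by comparing $u$ not with $v$ but with $v_\varepsilon\df v+\varepsilon$. Since $\inf_{\RN}v_\varepsilon\geq\varepsilon>0$ and $u\to 0$, the scaling factor $t_\varepsilon=\sup\{s:su<v_\varepsilon\text{ in }\RN\}$ is automatically positive and the contact is realized at a finite point. The price is that $v_\varepsilon$ is no longer a supersolution of the original equation; however, on $K^c$ (where $a<0$, hence $f<0$) monotonicity \eqref{ET3.2A0} gives $f(x,v_\varepsilon)\leq f(x,v)$, so $v_\varepsilon$ \emph{is} a supersolution there, while on the compact $K$ the defect is a uniformly small error controlled by the local Lipschitz assumption on $f(x,\cdot)$. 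The paper then splits into two cases---contact forced into $K^c$ versus contact in $K$---and in each case the strict monotonicity of $s\mapsto f(x,s)/s^{3-\gamma}$ yields the contradiction after doubling, with a quantitative gap \eqref{ET3.2C} in the second case that beats the $\varepsilon$-error. This $v+\varepsilon$ device is the missing ingredient in your argument.
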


\begin{proof}
Using \eqref{EF3} find a compact ball $K$ such that $a(x)<0$ for all $x\in K^c$. We assume that $(u-v)_+\neq 0$ in $\RN$ and arrive at a contradiction.  We divide the proof 
in cases.
\begin{itemize}
\item[(A)] $u\leq v$ in $K$.
\item[(B)] $\max_{K} \frac{u}{v}\geq 1+2\delta$, for some $\delta>0$.
\end{itemize}

{\bf Situation (A).}\,
We show that in this case $u\leq v$ in $\RN$ which would be a
contradiction to the fact that $(u-v)_+\neq 0$. Since $(u-v)_+\neq 0$, for some point $x\in K^c$ we have $u(x)-v(x)=2\varepsilon>0$. Let 
$v_\varepsilon(x)=v(x)+\varepsilon$. Then, we have $\liminf_{|x|\to\infty} v_\varepsilon(x)\geq \varepsilon>0$
and $u(x)-v_\varepsilon(x)=\varepsilon>0$. Define
$$t=\max\{s\geq 0\; :\; s u< v_\varepsilon\; \text{in}\; K^c\}.$$
Since $\lim_{|x|\to\infty} u(x)=0$, we have $t>0$.
Also, under the above circumstance, $t<1$. Let $\bar{u}=tu$. Then, $\bar{u}<v_\varepsilon$ on $\partial K$. We denote
by $\cO=K^c$. Since $a<0$ in $\cO$ it follows that $f(x, s)\leq s^{3-\gamma}a(x)<0$ in $\cO\times(0, \infty)$.
Thus, by monotonicity,
\begin{equation}\label{ET3.2A0}
f(x, s+\varepsilon)< \frac{(s+\varepsilon)^{3-\gamma}}{s^{3-\gamma}}f(x, s)\leq f(x, s)
\quad \text{in}\; \cO\times(0, \infty).
\end{equation}
Therefore, we have $\sL v_\varepsilon + f(x, v_\varepsilon)\leq 0$ in $\cO$. On the other hand, we 
also have
$\sL\bar{u} + f(x, \bar{u})\geq \sL\bar{u} + t^{3-\gamma}f(x, u)\geq 0$ in $\RN$.
Now consider the coupling function between $v_\varepsilon$ and $\bar{u}$ i.e.,
$$w_\kappa(x, y)= \bar{u}(x)-v_\varepsilon(y) - \frac{1}{4\kappa}|x-y|^2\quad x, y\in \bar\cO,
\quad \text{and}\; \kappa\in (0, 1)\,.$$
Let $(x_\kappa, y_\kappa)\in\argmax_{\bar\cO\times\bar\cO}w_\kappa$. It is evident that
$w_\kappa(x_\kappa, y_\kappa)\geq 0=\max_{\bar\cO} (\bar{u}-v_\varepsilon)$.
 Let $R$ be a large number so that
$$w_\kappa(x, y)\leq \bar{u}(x)-\varepsilon<-\varepsilon/2\quad \text{for}\; |x|\geq R,\; y\in\bar\cO\,.$$
Therefore, enlarging $R$, if required, we get $w_\kappa(x, y)<-\varepsilon/4$ whenever $(x, y)\in(\sB_R\times\sB_R)^c$.  This readily gives us $(x_\kappa, y_\kappa)\in (\sB_R\times\sB_R)\cap (\bar\cO\times\bar\cO)$. Now use the arguments in Theorem~\ref{T2.1} to obtain that
\begin{equation}\label{ET3.2A}
\lim_{\kappa\to 0} w_\kappa(x_\kappa, y_\kappa)=0,\quad
|x_\kappa-y_\kappa|^3\leq L \kappa,
\end{equation}
for some constant $L$, and $x_\kappa, y_\kappa\to z\in \bar\sB_R\cap\cO$ as $\kappa\to 0$, along some
subsequence. Using \eqref{ET3.2A} and an argument similar to the proof of Theorem~\ref{T1.4} we arrive at
$$-t^{3-\gamma} f(z, u(z))\leq -f(z, v_\varepsilon(z)),$$
and $\bar{u}(z)=v(z)$. This gives us
$$f(z, v_\varepsilon(z))-t^{3-\gamma} f(z, \frac{v_\varepsilon(z)}{t^{3-\gamma}})\leq 0,$$
which contradicts the strictly decreasing property of $\frac{f(x, s)}{s^{3-\gamma}}$ in $s$. Thus we must have
$u\leq v$ in $\RN$, as claimed.

{\bf Situation (B).} We show that $\max_{K} \frac{u}{v}\geq 1+2\delta$ leads to a contradiction.
Fix $\varepsilon_0>0$ such that
\begin{equation}\label{ET3.2B}
\min_{\varepsilon\leq \varepsilon_0} \max_{K}\frac{u}{v_\varepsilon}\geq 1+\delta.
\end{equation}
Let 
$$t=t_\varepsilon\df\max\{s\; :\; su<v_\varepsilon\quad \text{in}\; \RN\}.$$
It is evident that $t_\varepsilon>0$. Furthermore, letting $\beta=[\min_{K}\frac{v}{u}]\wedge 1$, we note that
$\beta u\leq v$ in $K$ and $\sL(\beta u) + f(x, \beta u)\geq 0$ in $\RN$. Thus by (A)
we get $\beta u\leq v$ in $\RN$, and therefore, we have $t_\varepsilon\geq \beta$. Let $x_\varepsilon\in\argmax_K \frac{u}{v}$. Then, by \eqref{ET3.2B},
$$t_\varepsilon u(x_\varepsilon)\leq v_\varepsilon(x_\varepsilon)\Rightarrow t^{-1}_\varepsilon\geq 
\frac{u(x_\varepsilon)}{v_\varepsilon(x_\varepsilon)}\geq 1+\delta,$$
implying 
$$t_\varepsilon\leq 1-\delta_0, \quad \delta_0=\frac{\delta}{1+\delta}\,,$$
for all $\varepsilon\leq \varepsilon_0$. We claim that there exists a $\delta_1>0$ such that
for any $\varepsilon\leq \varepsilon_0$, we have
\begin{equation}\label{ET3.2C}
t^{3-\gamma}_\varepsilon f(x, u(x))+\delta_1\leq  f(x, t_\varepsilon u(x))\quad \text{for all}\; x\in K.
\end{equation}
In fact, \eqref{ET3.2C} follows from the following inequality
\begin{equation}\label{ET3.2D}
\min_{s\in [\beta, 1-\delta_0]} \min_{x\in K}\left(\frac{f(x, su(x))}{(su(x))^{3-\gamma}}
-\frac{f(x, u(x))}{(u(x))^{3-\gamma}}\right)>0\,,
\end{equation}
and the fact $\min_K u>0$. \eqref{ET3.2D} is a consequence of continuity and strict monotonicity of 
$\frac{f(x, s)}{s^{3-\gamma}}$ in $s$.
 Now choose $\varepsilon\in (0, \varepsilon_0)$ small enough so that
\begin{equation}\label{ET3.2E}
f(x, v_\varepsilon) < \delta_1/2 + f(x, v)\quad \text{for}\; x\in K.
\end{equation}
Also, by \eqref{ET3.2A0}, $f(x, v_\varepsilon)\leq f(x, v)$ for $x\in K^c$.
Now do the coupling between $v_\varepsilon$ and $\bar{u}=t_\varepsilon u$, as done in (A),
and we arrive at the following
relation
$$-t_\varepsilon^{3-\gamma} f(z, u(z))\leq -f(z, v(z))\leq - f(z, v_\varepsilon(z)) + \frac{\delta_1}{2}\Ind_K(z),$$
and $\bar{u}(z)=v_\varepsilon(z)$ for some $z\in\RN$, where the right most inequality follows from
\eqref{ET3.2E}.
 If $z\in K^c$ then contradiction follows from strict monotonicity, as in (A).
If $z\in K$ then we know from \eqref{ET3.2C}
$$-t_\varepsilon^{3-\gamma} f(z, u(z))\geq - f(z, \bar{u}(z)) + \delta_1= - f(z, v_\varepsilon(z)) + \delta_1,$$
which again leads to a contradiction. Hence the proof.
\end{proof}

Next we prove a decay estimate at the infinity.
\begin{lemma}\label{L3.1}
Assume (F3) and $|H(x, q)|\leq C |q|^{3-\gamma}$.
Let $u\geq 0$ be a bounded solution to $\sL u + f(x, u)\geq 0$ in $\RN$. Then we have 
$\lim_{|x|\to\infty}u=0$. 
\end{lemma}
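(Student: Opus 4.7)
The argument reduces cleanly to the quadratic-barrier contradiction used already in the proof of Theorem~\ref{T1.3}(ii).

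First, use (F3) to convert the assumption into a homogeneous differential inequality. Fix $\delta>0$ and a compact set $K\subset\RN$ with $a(x)\le -2\delta$ on $K^c$. Since $s\mapsto f(x,s)/s^{3-\gamma}$ is strictly decreasing in $s$ with limit $a(x)$ as $s\searrow 0$ by (F3), this yields $f(x,s)\le -2\delta\, s^{3-\gamma}$ for all $s\ge 0$ and $x\in K^c$ (the value $s=0$ being covered by (F2)). Substituting into $\sL u+f(x,u)\ge 0$ shows that $u$ satisfies
$$\sL u-2\delta\, u^{3-\gamma}\ge 0\quad\text{in}\ K^c$$
in the viscosity sense. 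Crucially, this inequality is $(3-\gamma)$-homogeneous in $u$, so it is preserved when $u$ is replaced by any positive scalar multiple.

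Assume, for contradiction, that $L:=\limsup_{|x|\to\infty}u(x)>0$; by the homogeneity just observed we may renormalize so that $L=2$. Exactly as in the proof of Theorem~\ref{T1.3}(ii), the hypothesis $|H(x,q)|\le C|q|^{3-\gamma}$ combined with a direct computation shows that for the barrier $\xi(x)=r^{-2}|x-x_0|^2-1$ one has, in $\sB_r(x_0)$,
$$\sL\xi(x)\le 2^{3-\gamma}r^{-4+\gamma}+C\,2^{3-\gamma}r^{-3+\gamma},$$
uniformly in $x_0$. Fix $r$ large enough that $\sL\xi<\delta$ throughout $\sB_r(x_0)$, then choose $x_0$ with $|x_0|$ so large that $\sB_r(x_0)\subset K^c$, $u(x_0)>3/2$, and $\sup_{\sB_r(x_0)}u\le 2$; both choices are possible because $L=2$.

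Finally, run the sliding step: set $\beta=\inf\{t\in[1,3]\;:\; t+\xi>u\ \text{in}\ \sB_r(x_0)\}$. Since $5/2+\xi(x_0)=3/2<u(x_0)$ and $\beta+\xi>2\ge u$ on $\partial\sB_r(x_0)$, we have $\beta\ge 5/2$, and $v:=\beta+\xi$ touches $u$ from above at some $z\in\sB_r(x_0)$ with $u(z)\ge 3/2$. The viscosity subsolution property at $z$ then gives
$$\delta>\sL\xi(z)=\sL v(z)\ge 2\delta\, u(z)^{3-\gamma}\ge 2\delta\,(3/2)^{3-\gamma}\ge 2\delta,$$
a contradiction. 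The main subtlety to watch for is the degenerate possibility $z=x_0$ (where $\grad v(z)=0$); this is handled exactly as in Theorem~\ref{T1.3}(ii), using the vanishing-gradient clause of the viscosity definition when $\gamma=2$ (giving $M(D^2\xi(x_0))=2r^{-2}<\delta$) and the natural continuous extension $\ginfdel\equiv 0$ at zero gradient when $\gamma<2$.
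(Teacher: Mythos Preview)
Your proof is correct and follows essentially the same route as the paper: use (F3) to pass from $\sL u+f(x,u)\ge 0$ to the homogeneous inequality $\sL u\ge 2\delta\,u^{3-\gamma}$ outside a compact set, then invoke the quadratic-barrier sliding argument of Theorem~\ref{T1.3}(ii) to force a contradiction with $\limsup_{|x|\to\infty}u>0$. The paper's proof is literally a two-line reduction to that argument, and you have simply written it out in full (with the added care of discussing the degenerate case $\nabla v(z)=0$, which the paper leaves implicit).
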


\begin{proof}
Since $f(x, s)\leq s^{3-\gamma} a(x)$ by monotonicity, it holds that
$$\sL u + a(x) u^{3-\gamma}\geq 0 \quad \text{in}\; \RN.$$
Also, $\limsup_{|x|\to\infty} a(x)<0$. Then the proof follows from the arguments in Theorem~\ref{T1.3}(ii).
\end{proof}

Now we complete the proof of Theorem~\ref{T1.8}
\begin{proof}[Proof of Theorem~\ref{T1.8}]
(i)\, Let $\plam(\sL + a)>0$. Suppose that $u\gneq 0$ be a bounded solution of 
$$\sL + f(x, u)=0\quad \text{in}\; \RN.$$
Since $\frac{f(x, u(x))}{u^{3-\gamma}(x)}$ is finite, it follows from the strong maximum principle 
\cite[Theorem~2.2]{BV20} that $u>0$ in $\RN$. From Lemma~\ref{L3.1} we get that $u\in\cC^+_0(\RN)$.
Now, using monotonicity of $\frac{f(x, s)}{s^{3-\gamma}}$ we note that 
$$\sL u + a(x) u^{3-\gamma}\geq 0 \quad \text{in}\; \RN,$$
implying $\plam(\sL+a)\leq 0$. But this is a contradiction to the fact that $\plam(\sL + a)>0$. Hence
\eqref{EF3} can not have any nonnegative solution other that $u=0$.

(ii)\,  
By Lemma~\ref{L3.1} we know that any bounded solution belongs to $\cC^+_0(\RN)$. Therefore,
uniqueness part follows from Theorem~\ref{T3.2}. It is then enough
to prove the existence of a bounded solution. Let $\plam(\sL + a)<0$. Choose $\delta>0$ small enough so that
there exists $\psi\in\cC^+_0(\RN)$ satisfying
\begin{equation}\label{ET1.7B}
\sL\psi + (a(x)-2\delta)\psi^{3-\gamma}\geq 0\quad \text{in}\; \RN.
\end{equation}
Normalizing $\psi$ we may assume that $\norm{\psi}_\infty\leq M$.
Denote $\psi_\varepsilon=\psi-\varepsilon$ and $\cO_\varepsilon=\{\psi>\varepsilon\}$. Choose $\varepsilon$ 
small enough so that $K\Subset \cO_\varepsilon$ where $K$ is a compact set satisfying $a(x)<0$ in $K^c$.
Using convexity  we note that 
$$(\psi-\varepsilon)^{3-\gamma}\geq \psi^{3-\gamma}(x) - (3-\gamma) \psi^{2-\gamma}(x) \varepsilon
\geq \psi^{3-\gamma}(x) - (3-\gamma) M^{2-\gamma}\varepsilon\quad \text{in}\; \cO_\varepsilon.$$
Thus, in $\cO_\varepsilon$,
\begin{align*}
(a(x)-2\delta)\psi^{3-\gamma}_\varepsilon(x) &\geq (a(x)-2\delta)\psi^{3-\gamma}(x)
- (a(x)-2\delta)_+ (3-\gamma) M^{2-\gamma}\varepsilon
\\
&\geq(a(x)-2\delta)\psi^{3-\gamma}(x)
- \max_{K}|a(x)|\Ind_{K}(x)\, (3-\gamma) M^{2-\gamma}\varepsilon
\\
&\geq(a(x)-2\delta)\psi^{3-\gamma}(x)
- \max_{K}|a(x)|[\min_{K}\psi_\varepsilon]^{\gamma-3}\psi_{\varepsilon}^{3-\gamma}(x)
\, (3-\gamma) M^{2-\gamma}\varepsilon.
\end{align*}
Choosing $\varepsilon$ small enough we obtain from \eqref{ET1.7B} that
\begin{equation}\label{ET1.7C}
\sL\psi_\varepsilon + (a(x)-\delta)\psi_\varepsilon^{3-\gamma}\geq 0\quad \text{in}\; \cO_\varepsilon.
\end{equation}
Fix this choice of $\varepsilon$. Then we have $\cO_\varepsilon$ bounded and $\psi_\varepsilon=0$ on
$\partial\cO_\varepsilon$. Also, due to scale invariance, we note that \eqref{ET1.7C} also holds for 
any $\kappa\psi_\varepsilon$ for $\kappa>0$. Using (F3), we can find $\kappa$ small such that
$$f(x, s)\geq (a(x)-\delta)s^{3-\gamma}\quad \text{for}\; s\leq \kappa M.$$
This gives a subsolution from \eqref{ET1.7C} i.e.,
$$\sL(\kappa\psi_\varepsilon) + f(x, \kappa\psi_\varepsilon)\geq 0\quad \text{in}\; \cO_\varepsilon.$$
Also, $M$ is a supersolution. It is now standard to find a positive, bounded solution of \eqref{ET1.7A}
using monotone iteration method (see for instance, \cite[Lemma~4.1]{BV20}).
\end{proof}

\section{A boundary Harnack inequality}\label{S-BHI}

In this section we prove a boundary Harnack property. To describe the result we need a few notations. We assume that
$\cO$ is a $\cC^2$ domain. For $x\in\partial\cO$ we denote by $\cO_r(x)=\sB_r(x)\cap\cO$. By
$\nu_x$ we denote the inward normal at $x$. Suppose that $0\in\partial\cO$.
Let $\delta>0$ be such that any point $x\in\partial\cO\cap\sB_{4\delta}(0)$ satisfying a
inner sphere condition at $x$ with radius $2\delta$.
For $x\in\partial\cO$, we define $x_\delta=x+\delta \nu_x$.

Next we prove a boundary Harnack property.
\begin{theorem}[Boundary Harnack inequality]\label{T2.2}
There exists a constant $C$, dependent on $\delta, \cO, \theta, \mu, \gamma, N$, such that for
any positive $u$, vanishing continuously on $\sB_{4\delta}(0)\cap\partial\cO$ and satisfying
\begin{equation}\label{ET2.2A}
\ginfdel u - \theta |\grad u|^{3-\gamma} - \mu u^{3-\gamma}
\leq\, 0\,\leq\, \ginfdel u + \theta |\grad u|^{3-\gamma} + \mu u^{3-\gamma}
\quad \text{in}\; \cO_{4\delta}(0),
\end{equation}
we have
\begin{equation}\label{ET2.2B}
\sup_{\cO_\delta(0)} u \,\leq\, C u(0_\delta).
\end{equation}
\end{theorem}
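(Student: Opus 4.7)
The plan is to adapt the strategy of the proof of Theorem~\ref{T2.1} to a neighbourhood of the boundary, combining interior Harnack with a boundary barrier. By the $(3-\gamma)$-homogeneity of both sides of the hypothesis, I would normalise $u(0_\delta)=1$ so that the conclusion becomes a uniform bound $\sup_{\cO_\delta(0)} u \leq C$. Points $y\in\cO_\delta(0)$ with $\dist(y,\partial\cO)\geq \delta/8$ are connected to $0_\delta$ by a Harnack chain of balls of radius comparable to $\delta$ lying inside $\cO_{3\delta}(0)$, and Theorem~\ref{T2.1} applied along this chain yields $u(y)\leq C_0$; in particular, for each $x^*\in\partial\cO\cap\sB_{2\delta}(0)$ and a fixed $r_\star\in(0,2\delta]$ chosen smaller than the threshold $r_\circ$ from the proof of Theorem~\ref{T2.1}, the shifted point $z^\star:=x^*+r_\star\nu_{x^*}$ satisfies $u(z^\star)\leq C_1$.

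For a point $y\in\cO_\delta(0)$ near the boundary, set $s:=\dist(y,\partial\cO)$ and $x^*:=\text{proj}_{\partial\cO}(y)$. The $\cC^2$ inner sphere condition places the tangent ball $B_\star:=\sB_{r_\star}(z^\star)$ entirely inside $\cO$, with $y$ lying on the segment from $z^\star$ to $x^*$. Reproducing the computation from the proof of Theorem~\ref{T2.1} verbatim on $B_\star\setminus\{z^\star\}$, the family $v_\lambda(w):=\lambda\bigl(r_\star^\alpha-|w-z^\star|^\alpha\bigr)$ is, for each $\lambda>0$ and an appropriate $\alpha\in(0,1)$, a strict subsolution of $\ginfdel W-\theta|\grad W|^{3-\gamma}-\mu W^{3-\gamma}=0$. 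Since $v_\lambda=0\leq u$ on $\partial B_\star$ and $u$ is a supersolution of the same equation by the left-hand hypothesis, the coupling-doubling comparison of Theorem~\ref{T2.1} gives $v_\lambda\leq u$ on $B_\star$ whenever $\lambda r_\star^\alpha\leq u(z^\star)$. Taking the maximal such $\lambda$ and evaluating at $y$ (where $|y-z^\star|=r_\star-s$) yields the lower half of the two-sided estimate,
\begin{equation*}
u(y)\geq u(z^\star)\bigl(1-(1-s/r_\star)^\alpha\bigr),
\end{equation*}
which encodes the propagation of positivity from $z^\star$ down to the boundary.

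The crux — and the main technical obstacle — is the matching \emph{upper} bound. For this I would build, in the same ball $B_\star$, a radial \emph{super}solution $W$ of $\ginfdel W+\theta|\grad W|^{3-\gamma}+\mu W^{3-\gamma}=0$, for which $u$ is a subsolution by the right-hand hypothesis. A natural candidate is a concave radial profile such as $W(w)=A-B\bigl(r_\star-|w-z^\star|\bigr)^\beta$ with $\beta\in\bigl(1,(4-\gamma)/(3-\gamma)\bigr)$: writing $\rho=r_\star-|w-z^\star|$, the supersolution condition reduces to
\begin{equation*}
B^{3-\gamma}\beta^{3-\gamma}\rho^{\beta(3-\gamma)-4+\gamma}\bigl[-(\beta-1)+\theta\rho\bigr]+\mu\bigl(A-B\rho^\beta\bigr)^{3-\gamma}\leq 0,\qquad \rho\in[0,r_\star],
\end{equation*}
and I expect this to be satisfiable uniformly in $\rho$ for $r_\star$ sufficiently small and $B/A$ sufficiently large, exactly parallel to the threshold analysis producing $r_\circ$ in Theorem~\ref{T2.1}. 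With $A$ chosen so that $W\geq u$ on $\partial B_\star$ — which combines $u(x^*)=0$ at the sole contact point of $\partial B_\star$ with $\partial\cO$ and the Harnack-chain bound of Step~1 on the remainder of $\partial B_\star$ (distances of those points to $\partial\cO$ being bounded below in terms of $r_\star$ and the $\cC^2$ geometry of $\cO$) — the same coupling-doubling comparison, now with $u$ as subsolution and $W$ as supersolution, would force $u\leq W$ on $B_\star$; evaluating at $y$ closes the estimate. The hard part is precisely the verification of the supersolution inequality for $W$ in the presence of the first-order term $\theta|\grad W|^{3-\gamma}$ and the zero-order term $\mu W^{3-\gamma}$, together with the coordinated tuning of $\alpha,\beta,r_\star$ so that the final constant $C$ depends only on $\delta,\cO,\theta,\mu,\gamma,N$; this is the analogue, in the present degenerate nonlinear setting, of the construction carried out in \cite{TB07} for the pure infinity Laplacian.
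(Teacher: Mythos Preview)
Your proposal contains a genuine gap at the point you yourself flag as ``the crux''. You assert that on $\partial B_\star\setminus\{x^*\}$ the distance to $\partial\cO$ is bounded below, and therefore a Harnack chain gives a uniform bound for $u$ there. This is false: the inner ball $B_\star=\sB_{r_\star}(z^\star)$ is \emph{tangent} to $\partial\cO$ at $x^*$, so for $w\in\partial B_\star$ with $|w-x^*|=\epsilon$ one has $\dist(w,\partial\cO)\approx C(\cO)\,\epsilon^2\to 0$. The Harnack-chain length from such $w$ to $0_\delta$ diverges as $\epsilon\to 0$, and no uniform choice of $A$ with $W\equiv A\geq u$ on $\partial B_\star$ is available without already knowing the conclusion. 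The argument is circular: to bound $u$ at the near-boundary point $y$ you need to bound $u$ on $\partial B_\star$, but $\partial B_\star$ itself contains points just as close to $\partial\cO$.

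The paper avoids this circularity by a different mechanism. Instead of a barrier centred at an \emph{interior} point, it builds a barrier of the form $w(y)=V_\kappa(z)+[V_r(z)-V_\kappa(z)]\,\dfrac{|y-z|^{1/2}-\kappa^{1/2}}{r^{1/2}-\kappa^{1/2}}$ centred at the \emph{boundary} point $z$, applied in the half-annulus $\cO_r(z)\setminus\bar\cO_\kappa(z)$. On the outer portion $\partial\sB_r(z)\cap\cO$ one only needs $u\leq V_r(z):=\sup_{\cO_r(z)}u$, which holds by definition; on $\partial\cO\cap\sB_r(z)$ one has $u=0$. This yields the scale-decay $V_s(z)\leq V_r(z)(s/r)^{1/2}$ relating suprema at different scales rather than to a fixed value. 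The boundary Harnack is then finished not by a single comparison but by a dyadic iteration/contradiction argument (following \cite{TB07}): assuming the estimate fails at some $\xi$ with $d(\xi)<\delta 2^{-l}$, the decay estimate manufactures a sequence $\xi_k$ with $u(\xi_k)\geq M^{l+4k+1}u(0_\delta)$ and $d(\xi_k)\to 0$, contradicting the continuous vanishing of $u$ on $\partial\cO$. Your single-barrier scheme cannot replace this iterative step.
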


\begin{proof}
We follow the idea of \cite[Theorem~1.1]{TB07} which deals with positive infinite harmonic functions.
Due to the availability of interior Harnack inequality, Theorem~\ref{T2.1}, it is enough to 
prove \eqref{ET2.2B} for some small $\delta$.
Denote by $\Omega_\delta=\sB_{4\delta}\cap\{y\in\cO\;:\; \dist(y, \partial\cO)\geq \delta\}$.
Choose $\delta$ small enough so that for $x\in\sB_{4\delta}$ there is a unique point on $\bar{x}\in\partial\cO$ such that $x=\bar{x}+d(x) \nu_{\bar x}$ where $d(x)=\dist(x, \partial\cO)$.
We can even choose it smaller so that for $v_1(x)=(2\delta)^{1/2}-|x|^{1/2}$ and $v_2=|x|^{1/2}$
 we have 
$$ \ginfdel v_1 - \theta |\grad v_1|^{3-\gamma} - \mu v_1 > \delta_1,
\;\text{and}\; \ginfdel v_2 + \theta |\grad v_2|^{3-\gamma} + \mu (2\delta)^{\frac{3-\gamma}{2}}<-\delta_1
 \quad \text{for}\; 0<|x|\leq 2\delta,$$
for some $\delta_1>0$.
By interior Harnack inequality, Theorem~\ref{T2.1}, there exists $M_1>0$, dependent on $\delta$, satisfying 
\begin{equation}\label{ET2.2C}
\sup_{\Omega_\delta} u \leq M_1 u(0_\delta).
\end{equation}
On the other hand, from the proof of \eqref{ET2.1C}
there exists a universal constant $M_2$ such that for any $x\in\cO_{4\delta}(0)$ with
$d(x)<2\delta$ we have 
$$ u(x)\leq M_2 u(\bar{x} + \frac{3}{2}d(x)\nu_{\bar x}).$$
Let $M=\max\{M_1,M_2\}$. Then
the above conclusion gives us 
\begin{equation}\label{ET2.2D}
u(x)\leq M u(\bar{x} + 2d(x)\nu_{\bar x}) \quad \text{for}\; d(x)<\delta.
\end{equation}
Repeating \eqref{ET2.2D} we note that, if $\delta/2^k\leq d(x)<\frac{\delta}{2^{k-1}} $, 
\begin{equation}\label{ET2.2E}
u(x)\leq M^{k-1} u(\bar{x} + 2^{k-1}d(x)\nu_{\bar x})\leq M^k u(0_z),
\end{equation}
where in last line we used \eqref{ET2.2C}.
To establish \eqref{ET2.2B} it is then enough to show that there exists $l$ large, not depending on $u$, such that if 
$d(\xi)<\frac{\delta}{2^{l}}$, $\xi\in\cO_\delta$, we have 
\begin{equation}\label{ET2.2F}
u(\xi)\leq M^{l+3} u(0_z).
\end{equation}
To establish \eqref{ET2.2F} we need an oscillation estimate of $u$. Take $z\in\sB_{3\delta}\cap \partial\cO$
and define $V_r(z)=\sup_{y\in \cO_r(z)} u(y)$. We claim that, for $r< \delta$,
\begin{equation}\label{ET2.2H}
V_s(z)\leq V_r(z) \left[\frac{s}{r}\right]^{1/2}\quad \text{for}\; 0<s\leq r. 
\end{equation}
Since $u$ is positive in $\cO$ it is evident that $V_s(z)>0$ for all $s>0$. Now fix $s\in (0, r)$. Since
$\lim_{s\to 0} V_s(z)=0$ by continuity, we can find $\kappa\in (0, s)$ satisfying $V_\kappa(z)<V_r(z)$.
Let 
$$w(y) = V_\kappa(z)+ [V_r(z)-V_\kappa(z)]
\frac{|y-z|^{\frac{1}{2}} - \kappa^{\frac{1}{2}}}{r^{\frac{1}{2}}-\kappa^{\frac{1}{2}}}, \quad y\in\cO_r(z)\setminus\bar\cO_\kappa(z).$$
By our choice of $\delta$, we have
\begin{equation}\label{ET2.2G}
\ginfdel w + \theta |\grad w|^{3-\gamma} + \mu w^{3-\gamma} < -\delta_1\left[\frac{V_r(z)-V_\kappa(z)}{r^{\frac{1}{2}}-\kappa^{\frac{1}{2}}}\right]^{3-\gamma}+ \mu L\, V_{\kappa}(z)
\quad \text{in}\; 
\cO_r(z)\setminus\bar\cO_\kappa(z) ,
\end{equation}
for some constant $L$, dependent on $V_r(z)$. Choosing $\kappa$ further small we obtain from
\eqref{ET2.2G} that
\begin{equation}\label{ET2.2G1}
\ginfdel w + \theta |\grad w|^{3-\gamma} + \mu w^{3-\gamma} < -\delta_2
\quad \text{in}\; 
\cO_r(z)\setminus\bar\cO_\kappa(z) ,
\end{equation}
for some $\delta_2>0$.
Since $\mu\geq 0$ we can not directly
apply the  comparison principle \cite[Theorem~2.1]{BV20} to conclude $u\leq w$ in $\cO_r(z)\setminus\bar\cO_\kappa(z)$. To apply comparison principle
we first note that $u\leq w$ on $\partial(\cO_r(z)\setminus\bar\cO_\kappa(z))$
and $w>0$ on $\partial(\cO_r(z)\setminus\bar\cO_\kappa(z))$.
Let $t=\max\{s\geq 0\; :\; su <w \; \text{in}\; \cO_r(z)\setminus\bar\cO_\kappa(z)\}$.
It is easily seen that $t>0$. If we can show that $t\geq 1$ then we would have $u\leq w$ in 
$\cO_r(z)\setminus\bar\cO_\kappa(z)$. On the contrary,
suppose that $t<1$ and let $w_1=t^{-1}w$. By \eqref{ET2.2G1} we have
$$\ginfdel w + \theta |\grad w|^{3-\gamma} + \mu w^{3-\gamma} < - t^{\gamma-3}\delta_2\quad \text{in}\; \cO_r(z)\setminus\bar\cO_\kappa(z),$$
and $M=\sup(u-w_1)=0$.
 Also, the value $M$ is attained inside $\cO_r(z)\setminus\bar\cO_\kappa(z)$ since $u\leq w< w_1$
 on $\partial(\cO_r(z)\setminus\bar\cO_\kappa(z))$.
Now repeat the proof of comparison principle in Theorem~\ref{T2.1} by considering coupling
function
between $u$ and $w_1$ to reach a 
contradiction. Thus we must have $u\leq w$ in $\cO_r(z)\setminus\bar\cO_\kappa(z)$.
Now letting $\kappa\to 0$ we obtain
$$u(y)\leq V_r(z)
\frac{|y-z|^{\frac{1}{2}}}{r^{\frac{1}{2}}}\quad \text{for}\;y\in\cO_r(z),$$
which in turn, gives \eqref{ET2.2H}.

Now suppose, on the contrary, that \eqref{ET2.2F} does not hold. Then there exists $\xi$ satisfying
$u(\xi)> M^{l+3} u(0_z)$. Let $p_0$ be the point on the boundary satisfying $d(\xi)=|\xi-p_0|$.
Applying \eqref{ET2.2H} we then have
$$V_{\delta 2^{-l+m}}(p_0) \geq 2^{m/2} V_{\delta 2^{-l}}(0)
\geq 2^{m/2} u(\xi)\geq 2^{m/2} M^{l+3} u(0_\delta).$$
Choose $m$ large to satisfy $2^{m/2}>M^2$, giving us $V_{\delta 2^{-l+m}}(p_0)\geq M^{l+5} u(0_\delta)$.
Choose a point $\xi_1\in \bar\cO_{\delta 2^{-l+m}}(p_0)$ satisfying
$$u(\xi_1)\geq M^{l+5} u(o_\delta).$$
Note that $|\xi-\xi_1|\leq \delta 2^{-l+m+1}$, and by \eqref{ET2.2E}, $d(\xi_1)<\delta/2^{l+4}$. Now 
repeat the above argument (replacing $l$ by $l+4$) to find $\xi_2$ with
$$u(\xi_2)\geq M^{l+2.4+1} u(0_\delta), \quad |\xi_1-\xi_2|\leq \delta 2^{-l-4+m+1},\quad 
d(\xi_2)\leq \delta 2^{-l-2.4}.$$
Hence we get a sequence $\{\xi_k\}$ satisfying
$$u(\xi_k)\geq M^{l+k.4+1} u(0_\delta), \quad |\xi_{k-1}-\xi_k|\leq \delta 2^{-l-4(k-1)+m+1},\quad 
d(\xi_k)\leq \delta 2^{-l-k.4}.$$
Note that 
$$|\xi_k-0|\leq |\xi -0| + \sum_{k\geq 1} |\xi_k-\xi_{k-1}|\leq \delta (1+ 2^{-l+m+1}\sum_{k\geq 1} 2^{-4(k-1)}).$$
Thus, we can choose $l$ large, dependent on $m$, so that $|\xi_k-0|<2\delta$ for all $k$.
Note that $\xi_k$ converges to the boundary and this leads to a contradiction
together with $u(\xi_k)\geq M^{l+k.4+1} u(0_\delta)$ since $u$ vanishes on the boundary.
Thus we have \eqref{ET2.2F} with the above choice of $l$. This completes the proof.
\end{proof}

\subsection*{Acknowledgement}
The research of Anup Biswas was supported in part by DST-SERB grants EMR/2016/004810 and MTR/2018/000028.

\end{document}